\newtheorem{lem}[theorem]{Lemma}
\newtheorem{prop}[theorem]{Proposition}
\newtheorem{cor}[theorem]{Corollary}
\newtheorem{remk}[theorem]{Remark}
\newtheorem{assumption}{Assumption}
\newcommand{\norm}[1]{\left\lVert#1\right\rVert}
\newcommand{\TheTitle}{An Example Article}
\title{{\TheTitle}\thanks{Submitted to the editors DATE.
\funding{This work was funded by the Fog Research Institute under contract no.~FRI-454.}}}
\author{
  Dianne Doe\thanks{Imagination Corp., Chicago, IL
    (\email{ddoe@imag.com}, \url{http://www.imag.com/\string~ddoe/}).}
  \and
  Paul T. Frank\thanks{Department of Applied Mathematics, Fictional
    University, Boise, ID (\email{ptfrank@fictional.edu},
    \email{jesmith@fictional.edu}).}
  \and
  Jane E. Smith\footnotemark[3]
}
\DeclareMathOperator*{\argmin}{argmin}
\renewcommand*{\theassumption}{\Alph{assumption}}
\newcommand{\wavg}[2]{\bar{\mathbf{#1}}_{\boldsymbol{\phi}^{#2}}}
\newcommand{\Vwavg}[2]{\mathbf{J}_{\boldsymbol{\phi}^{#2}}\mathbf{#1}^{#2}}
\newcommand{\var}[2]{ \mathbf{e}_{#1}^{#2}}
\newcommand{\Deltax}{\Delta\widetilde{\mathbf{x}}_{\boldsymbol{\phi}}}
\newcommand{\Deltaxi}[1]{\Delta\widetilde{\mathbf{x}}_{(#1),\boldsymbol{\phi}}}
\newcommand{\eDeltax}{E_{\Delta\widetilde{\mathbf{x}}}}
\newcommand{\exorth}{E_{\mathbf{x}_{\bot}}}
\newcommand{\eyorth}{E_{\mathbf{y}_\bot}}
\newcommand{\wexorth}[1]{\sum_{k=0}^{\bar{B}-1} \frac{k+1+(\bar{B}-k-1)\tilde{\rho}}{1-\tilde{\rho}}\,\norm{\var{x}{#1+k}}^2}
\newcommand{\weyorth}[1]{\sum_{k=0}^{\bar{B}-1} \frac{k+1+(\bar{B}-k-1)\tilde{\rho}}{1-\tilde{\rho}} \,\norm{\var{y}{#1+k}}^2}
\newcommand{\cumsum}{\sum_{t=0}^{\bar{B}-1}}
\newcommand{\DiagPhi}[1]{\widehat{\mathbf{D}}_{\boldsymbol{\phi}^{#1}}}
\newcommand{\cDeltax}{c_{\Delta}}
\newcommand{\corth}{c_{\bot}}
\newcommand{\LBphi}{\phi_{lb}}
\newcommand{\UBphi}{\phi_{ub}}
\newcommand{\error}{\boldsymbol{\delta}}
\newcommand{\rhoopt}{\rho_{\bar{B}}}
\begin{document}

\title{Distributed Nonconvex   Constrained Optimization over Time-Varying Digraphs
}


\author{Gesualdo Scutari         \and
       Ying Sun  
 }


\institute{Scutari and Ying are with the 
              School of Industrial Engineering, Purdue University, West-Lafayette, IN, USA. Emails: \texttt{<gscutari,sun578>@purdue.edu}. 
              This work was supported by the USA National Science Foundation (NSF) under Grants CIF 1564044  and CAREER Award No. 1555850; and the Office of Naval Research (ONR) Grant N00014-16-1-2244.               Part of this work has been presented at the 2016 Asilomar Conference on System, Signal, and Computers \cite{SunScutariPalomar-C'16} and the 2017 IEEE  
              ICASSP Conference \cite{SunScutariICASSP17}. 
\vspace{-4.9cm}}
 
\date{
Received: June 3, 2017; Revised June 5, 2018. \vspace{-.2cm}
 }

\maketitle

\begin{abstract}
This paper considers  nonconvex distributed constrained optimization over  networks, modeled as directed (possibly  time-varying) graphs. We introduce the first algorithmic framework for the minimization of the sum of a  smooth nonconvex (nonseparable) function--the agent's sum-utility--plus a Difference-of-Convex (DC)  function (with nonsmooth convex part). This general formulation arises in many  
applications, from statistical machine learning to  engineering.  {The proposed distributed method combines  successive convex approximation techniques with a judiciously designed  perturbed push-sum consensus  mechanism that aims to track  locally the gradient of the (smooth part of the) sum-utility.}  Sublinear convergence rate is proved when  a fixed step-size (possibly different among the agents) is employed whereas asymptotic convergence to stationary solutions  is proved using a diminishing step-size.   Numerical results show that our algorithms  {compare favorably with } 
current schemes on both convex and nonconvex problems.

\vspace{-0.4cm}
\end{abstract}

\section{Introduction}\vspace{-0.2cm}
\label{intro}

This paper focuses on the following (possibly) \emph{nonconvex} multiagent composite optimization problem:\vspace{-0.2cm}
\begin{equation} 
\min_{\bf{x}\in \mathcal K}\,U\left(\bf x\right)\triangleq \underset{F(\mathbf{x})}{\underbrace{\sum_{i=1}^I f_i\left(\bf x\right)}} + \underset{G\left(\mathbf{x}\right)}{\underbrace{G^{+}\left(\bf x\right) - G^{-}\left(\bf x\right)}},
\tag{P}\label{eq: P}\vspace{-0.2cm}
\end{equation}
where  $f_i:\mathbb{R}^m\to \mathbb{R}$ is the cost function of agent $i$, assumed to be smooth  (possibly) \emph{nonconvex};    $G:\mathbb{R}^m\to \mathbb{R}$ is a DC  function, whose concave part $-G^-$ is   smooth; and 
  $\mathcal{K}$ is a closed convex subset of $\mathbb{R}^m$. The function $G$ is generally used to promote some extra structure on the solution, like sparsity. Note that, differently from most of the papers in the literature, we do not require   the (sub)gradient of $f_i$, $G^-$ or $G^+$ to be (uniformly) bounded on $\mathcal K$. Agents are connected through a communication network, modeled as a directed graph, possibly  time-varying. Moreover,  each agent $i$ knows only its own function  $f_i$ (as well as $G$ and $\mathcal K$). In this setting, the agents want to cooperatively solve Problem\eqref{eq: P} leveraging local communications with their immediate neighbors.\\ 
\indent Distributed \emph{nonconvex} optimization  in the form   \eqref{eq: P} has found a wide range of applications in several areas, 
including   network  information processing, 
 telecommunications, 
 multi-agent control,
 and machine learning. 
In particular, Problem~\eqref{eq: P}   is a key enabler of many emerging \emph{nonconvex} ``big data'' analytic tasks,  including   nonlinear
least squares, dictionary learning,  principal/canonical component analysis,     low-rank approximation, and matrix completion \cite{friedman2001elements},
just to name a few. Moreover, the DC structure of   $G$  allows  to accommodate in an unified fashion     convex and  nonconvex sparsity-inducing surrogates  of the $\ell_0$ cardinality function (cf.~Sec.~\ref{sec:prob}). Time-varying communications arise, for instance, in mobile wireless networks (e.g., ad-hoc networks), wherein  nodes are mobile and/or communicate throughout fading channels. Moreover, since  nodes generally transmit    at different power    and/or communication channels are not symmetric,  directed  links are   a natural assumption.\\
\indent  In most of the  above scenarios,  data processing and optimization need to be performed   in a distributed but collaborative manner by the agents within the network. For instance, this is the case in 
data-intensive (e.g., sensor-network) applications wherein the sheer volume and spatial/temporal disparity of scattered    data  render centralized
processing and storage infeasible or inefficient. \\
\indent  While distributed  methods for convex optimization  have been widely studied in the
literature, there are no such schemes  for \eqref{eq: P} (cf.~Sec.\,\ref{related_works}).  We propose   the first  family of distributed algorithms that converge to  stationary solutions of \eqref{eq: P} over   time-varying (directed) graphs. 
Asymptotic convergence is proved, under the use of either constant uncoordinate step-sizes from the agents or diminishing ones. When a constant step-size is employed, the algorithms are showed to achieve sublinear convergence rate. Furthermore,  the technical tools we introduce are of independent interest. 
 Our analysis hinges on  a descent technique   technique valid for nonconvex, nonsmooth, constrained problems based on a novel {Lyapunov}-like function (see Sec.\,\ref{sec:contributions} for the  list of contributions).
  \vspace{-0.5cm}



\subsection{Related works}\label{related_works}\vspace{-0.3cm}

The design of  distributed algorithms for \eqref{eq: P}   faces the following challenges:    
 (i) $U$ is \emph{nonconvex} and \emph{nonseparable}; (ii) $G^+$  is  \emph{nonsmooth};   (iii)  there are \emph{constraints}; (iv) the graph is \emph{directed} and \emph{time-varying}, with \emph{no specific} structure;  and (v) the (sub)gradient of $U$  is not assumed to be bounded on  $\mathcal K$.  We are not aware of any distributed design    addressing  (even a subset of) challenges (i)-(v), as documented next.   {Since the focus of this work is on distributed algorithms working on  general network architectures, we omit to discuss the vast literature of schemes implementable on {\it specific} topologies, such as   hierarchical networks (e.g., master-slave  or shared memory systems);    see, e.g.,  \cite{wright2015coordinate,palomarTACdistributed,ScuFacSonPalPan2014,facchinei2015parallel,scutari_PartI,scutari_PartIII,Curtis-SIAM18}   and references therein for an entry point of this literature.}



 \noindent \textbf{Distributed \emph{convex} optimization:} Although the focus of this paper is mainly on nonconvex optimization, we begin overviewing  the much abundant literature of  distributed algorithms for \emph{convex} problems. We show in fact  that, even   in this simpler setting,   some of the challenges (ii)-(v) remain unaddressed.
 

\noindent $-$\textit{Primal methods}: While substantially different, primal methods  can be generically abstracted as a combination of a       local (sub)gradient-like   step  and a subsequent   consensus-like update (or multiple consensus updates); examples include \cite{nedic2015distributed,nedic2009distributedsubgradient,jakovetic2014fast,shi2015extra, shi2015proximal}.     
Algorithms for adaptation and learning tasks based on in-network diffusion techniques were proposed in \cite{cattivelli2010diffusion,chen2012diffusion,sayed2014adaptation}. Schemes in \cite{nedic2009distributedsubgradient,jakovetic2014fast,shi2015extra,shi2015proximal,cattivelli2010diffusion,chen2012diffusion,sayed2014adaptation} are applicable only to 
\emph{undirected} graphs;     \cite{nedic2009distributedsubgradient,cattivelli2010diffusion}   require the consensus matrices to be  \emph{double-stochastic} whereas  \cite{chen2012diffusion,sayed2014adaptation} use only \emph{row-stochastic} matrices but are applicable only to strongly convex agents' cost functions  having  a \emph{common} minimizer. When the graph is \emph{directed}, double-stochastic   weight matrices compliant to the graph  might not exist or are not easy to be constructed in a distributed way \cite{gharesifard2010does}. This requirement    was removed in  \cite{nedic2015distributed} where the authors combined the  sub-gradient algorithm \cite{nedic2009distributedsubgradient} with   push-sum  consensus   \cite{kempe2003gossip}.  Other schemes applicable to digraphs are \cite{xi2015linear,XiKha-J'16}.  However,    \cite{nedic2009distributedsubgradient,xi2015linear,XiKha-J'16}  cannot handle constraints. In fact, up until this work (and the associated conference papers \cite{SunScutariPalomar-C'16,SunScutariICASSP17}) it was not clear  how to leverage push-sum-like protocols to deal with constraints  over \emph{digraphs}.  Finally,  as far as challenge (v) is concerned, only  recent proposals \cite{nedich2016achieving,Xu2015augmented,qu2016harnessing,xi2015linear,XiKha-J'16,shi2015extra,shi2015proximal} removed the  assumption that  the (sub-)gradient of $U$ has to be bounded; however \cite{nedich2016achieving,Xu2015augmented,qu2016harnessing,xi2015linear,XiKha-J'16,shi2015extra}    
   can  handle only \emph{smooth and unconstrained} problems while  \cite{Xu2015augmented,qu2016harnessing,XiKha-J'16,shi2015extra,shi2015proximal} are not implementable over digraphs.

 \noindent $-$\textit{Dual-based methods}: This class of algorithms is based on a different approach: slack variables are first introduced to decouple the sum-utility function while forcing consistency among these local copies by adding consensus equality constraints (compliant with the graph topology). Lagrangian dual variables are then  introduced to deal with such coupling constraints. The resulting algorithms build on 
 primal-dual updates,  aiming at converging to a saddle point of the    (augmented) Lagrangian function. 
   Examples of such algorithms include ADMM-like methods   \cite{chang2014proximal,wei20131,jakovetic2011cooperative,shi2015proximal} as well as  inexact primal-dual instances   \cite{chang2015multi,mokhtari2016decentralized,mokhtari2015dqm}.     All these algorithms can handle only \emph{static and undirected} graphs.  Their extensions to time-varying graphs or digraphs seem  not possible, because it is not clear how to enforce consensus via equality constraints over time-varying or directed networks.  Furthermore, all the above schemes but \cite{shi2015proximal,chang2014proximal} require $U$  
  to be   \emph{smooth} and \eqref{eq: P} to be  \emph{unconstrained}.

 {In summary, even restricting to convex instances of \eqref{eq: P}, there exists no distributed algorithm  in the  literature that can deal with either constraints [issue (iii)] or nonsmooth $U$ [issue (ii)] with  nonbounded (sub-)gradient [issue (v)] over (time-varying) digraphs.}      
Also, it is not clear how to extend the convergence analysis  developed  in   the above papers when $U$ is no longer  convex.
  
\noindent \textbf{Distributed \emph{nonconvex} optimization:} 
 Distributed algorithms dealing with special instances of Problem~\eqref{eq: P} are scarce; they include   primal methods  \cite{bianchi2013convergence,tatarenko2015non,LorenzoScutari-J'16,Wai2017DeFW}  and dual-based  schemes \cite{zhu2013approximate,hong2016decomposing}. The key features of these algorithms are summarized in Table \ref{nonconvex_table} and  discussed next.  

\noindent $-$\textit{Primal methods}: 
The scheme in   \cite{bianchi2013convergence} combines the distributed  stochastic projection algorithm, employing a diminishing step-size, with the random gossip protocol. It    
 can handle \emph{smooth} objective functions over \emph{undirected static} graphs; no rate analysis of the scheme is known. In   \cite{tatarenko2015non}, the authors showed that the (randomly perturbed)   push-sum gradient algorithm with diminishing (square summable) step-size, earlier proposed for convex objectives in \cite{nedic2015distributed}, converges also when applied to nonconvex \emph{smooth unconstrained}  problems.  Asymptotic convergence   and  a  sublinear convergence rate were proved (the latter under the assumption that the set of stationary points of $U$ is finite).  The first, to our knowledge,  provably convergent distributed scheme   for  \eqref{eq: P}, with $G^+\neq 0$ and constraints $\mathcal K$, over time-varying  graphs is NEXT  \cite{LorenzoScutari-J'16}. The algorithm   requires the consensus matrices to be doubly-stochastic. 
 Asymptotic convergence was proved, when a diminishing step-size is employed;   no rate analysis was provided. A special instance of NEXT was studied  in \cite{Wai2017DeFW}, where the authors considered  {\it smooth} (possibly nonconvex) $U$ over \emph{undirected static} graphs. Under a diminishing step-size (and further  technical assumptions on the set of stationary solutions), a sublinear convergence rate 
is proved. Finally, all the algorithms discussed above require that  the (sub)gradient of $U$ is \emph{bounded} on $\mathcal K$ (or $\mathbb{R}^m$). This is a key assumption to prove convergence: in the analysis of descent, it permits to  treat the optimization and consensus steps \emph{separately}, with the consensus error being  a  \emph{summable} perturbation. \\
  $-$\textit{Dual-based methods}:  In \cite{zhu2013approximate}  
 a distributed approximate dual subgradient algorithm, coupled with a consensus scheme (using double-stochastic weight matrices),  is introduced to solve \eqref{eq: P} over time-varying graphs. 
Assuming zero-duality gap, the algorithm is   proved to asymptotically find a pair of primal-dual solutions  of an auxiliary problem
, which however    might not be   stationary for the original problem; also, consensus is not guaranteed. No rate analysis is provided.   In \cite{hong2016decomposing}, a proximal primal-dual algorithm is proposed to solve an \emph{unconstrained}, \emph{smooth} instance of \eqref{eq: P} over {\it undirected static} graphs.  The algorithm employs either a constant or increasing penalty parameter (which plays the role of the step-size); a global sublinear convergence rate is proved. 
The algorithm can also  deal with nonsmooth convex regularizes and norm constraints when it is applied to  some  distributed matrix factorization problems.  \\
\renewcommand{\arraystretch}{1.3}\begin{table}[t]
\vspace{-0.2cm}
\centering
\resizebox{0.95\textwidth}{!}{
\begin{tabular}{l c | ccccccc}			\hline	
																											& 						&  \begin{tabular}[c]{@{}c@{}}\textbf{Proj-}\\ \textbf{DGM}\\\cite{bianchi2013convergence} \end{tabular}	& \begin{tabular}[c]{@{}c@{}}\textbf{NEXT}\\\cite{LorenzoScutari-J'16} \end{tabular}		& \begin{tabular}[c]{@{}c@{}}\textbf{Push-sum}\\ \textbf{DGM}\\\cite{tatarenko2015non} \end{tabular}  	& \begin{tabular}[c]{@{}l@{}}\textbf{Prox-}\\\textbf{PDA}\\\cite{hong2016decomposing} \end{tabular} 	&\begin{tabular}[c]{@{}l@{}}\textbf{DeFW} \cite{Wai2017DeFW}\end{tabular} 						& \begin{tabular}[c]{@{}l@{}}\textbf{SONATA}\\This work \end{tabular}  \\\hline
\textbf{nonsmooth $G^+$} 					& 						& 																				&\checkmark	&																						&																				&												& \checkmark \\ \hline
\textbf{constraints} 																		 					&  					&\checkmark													  			&\checkmark  &																						&																				&	$\mathcal{K}$ compact			&\checkmark \\\hline
 \textbf{unbounded} \textbf{gradient} 				& 						&																			  	& 			 	&																						&						 \checkmark														&												&\checkmark \\\hline
 \multirow{2}{*}{\begin{tabular}[c]{@{}l@{}}\textbf{network}\vspace{-0.1cm}\\\textbf{topology} \end{tabular} \textbf{:}} &  \textbf{time-varying} 				&																			 	& \checkmark	&	\checkmark																	&																				&												&\checkmark \\	\cline{2-8}
																										  	& \textbf{digraph}   		&																				&	restricted	&	\checkmark																	&																				&												&\checkmark \\\hline			
\multirow{2}{*}{\textbf{step-size: }}    																		& \textbf{constant}  		&																				&					&																						&	\checkmark															&												&\checkmark \\ \cline{2-8}
																											& \textbf{diminishing} 	&\checkmark																& \checkmark &	\checkmark																	&		\checkmark																		&	\checkmark							&\checkmark \\\hline
\textbf{complexity} 																							&						&																				&					&\checkmark																		&\checkmark																& \checkmark								&\checkmark \\\hline
\end{tabular}}\smallskip
\centering 
\caption{Distributed nonconvex optimization: Current works and contribution of this paper.}
\label{nonconvex_table}\vspace{-1.2cm}
\end{table}  
\noindent\textbf{Gradient-tracking:}   The proposed algorithmic framework leverages the idea of gradient tracking: each agent updates its own local variables along  a direction that is a proxy of the sum-gradient $\sum_{i=1}^I \nabla f_i$ at the current iteration, an information that is   not locally available.  The idea of  tracking the gradient averages through the use of consensus coupled with distributed optimization was independently introduced in  \cite{Lorenzo2015NEXT-CAMSAP15,Lorenzo2015NEXT-ICASSP16,LorenzoScutari-J'16} (NEXT     framework)  for     constrained, nonsmooth, nonconvex instances of \eqref{eq: P} over time-varying graphs and in \cite{Xu2015augmented}    for the  case of strongly convex, unconstrained, smooth optimization over static undirected graphs.  This   tracking protocol was extended to  arbitrary (time-varying) digraphs (without requiring doubly-stochastic weight matrices) in our conference work  \cite{SunScutariPalomar-C'16}.    
A convergence rate analysis of  the scheme in \cite{Xu2015augmented}   was later developed in \cite{nedich2016achieving,qu2016harnessing}, with   \cite{nedich2016achieving} considering (time-varying)  directed   graphs. We refer the reader to Sec.~\ref{sec:avg_consensus} for a  more detailed discussion on this topic.    

 \vspace{-0.5cm}
\subsection{Summary of  contributions}\label{sec:contributions}\vspace{-0.3cm}
 We summarize our major contributions as follows; see also Table~\ref{nonconvex_table}.  \vspace{-0.1cm}
\begin{enumerate}
\item \textbf{Novel algorithmic framework:}  We propose the first provably convergent distributed algorithmic framework for the general class of    Problem~\eqref{eq: P}, addressing   \emph{all} challenges  (i)-(iv). The proposed  approach hinges on Successive Convex Approximation (SCA) techniques, coupled with a    judiciously designed  perturbed push-sum consensus  mechanism that aims to track  locally the gradient  of $F$. Both communication and tracking  protocols  are implementable on arbitrary \emph{time-varying} undirected or \emph{directed} graphs, and in the latter case only  column-stochasticity of the weight matrices is required. Also,   \emph{feasibility} of the iterates is preserved at each iteration.  Either constant   or diminishing step-size rules can be used  in the same scheme, and  
convergence to  stationary solutions
of Problem~\eqref{eq: P} is established. 

\item \textbf{Iteration complexity:} We prove that the proposed scheme has sublinear convergence rate as long as the positive step-size is smaller than an explicit upper bound;  different step-sizes   among the agents  can also be used. To the best of our knowledge, this is the first convergence/complexity result of  distributed algorithms employing a constant step-size for  nonconvex (constrained) optimization over (time-varying) digraphs.   
\item \textbf{New  {Lyapunov}-like  function and  {descent} technique:} We improve upon  existing convergence  techniques and introduce new ones. Current analysis of distributed algorithms has trouble  handling \emph{nonconvex, nonsmooth, constrained} optimization. Moreover, in the presence of unbounded (sub-)gradients of the objective function,  {descent} on the objective function while treating  optimization and consensus errors separately no  longer works.   A new  convergence analysis is introduced to overcome this difficulty based on a novel ``{Lyapunov}''-like  function that  properly combines suitably defined weighted average dynamics,  consensus and tracking disagreements. 
\item   \textbf{Broader class of problems and convergence results:} The proposed algorithmic framework and convergence results are  applicable to a significantly larger class of (constrained)  optimization problems  and network topology  than current distributed schemes, including several instances  arising from machine learning, signal processing, and data analytic applications (cf. Sec.\ref{sec:examples}). Moreover, we contribute to the theory of distributed algorithms also for convex  problems, being our schemes the first able to   provably  deal with either constraints [issue (iii)] or nonsmooth $U$ [issue (ii)] with  nonbounded (sub-)gradient [issue (v)] over (time-varying) digraphs.      
 Finally, our algorithm contains  as special cases several recently   gradient-based algorithms whose convergence was proved under more restrictive assumptions on the optimization problem  and network topology (cf. Sec.~\ref{sec:connection}). \vspace{-0.4cm} \end{enumerate} 
  
Finally, preliminary numerical
results show that the proposed schemes compare favorably with 
state-of-the-art algorithms.

 The rest of the  paper is organized as follows. The problem setting is discussed in 
Sec.~\ref{sec:prob} along with some motivating applications. Some preliminary results, including a perturbed push-sum consensus scheme over time-varying digraphs, 
are introduced in Sec.~\ref{sec:avg_consensus}.  Sec.~\ref{sec:alg} describes the proposed algorithmic framework along with its convergence properties, whose  proofs  are given in Sec.~\ref{sec:convergence proof}. Finally, some numerical results are presented in Sec.~\ref{sec:experiments}.\\ \noindent \textbf{Notation.} The set of nonegative (resp. positive) natural number is denoted by $\mathbb{N}_+$ (resp. $\mathbb{N}_{++}$). A vector $\mathbf{x}$  is viewed as a column vector;   matrices are denoted by bold letters.  We  work with the space $\mathbb{R}^m$, equipped with the standard Euclidean norm, which is denoted by $\|\bullet\|$; when the argument of $\|\bullet\|$ is a matrix, the default norm is the spectral norm.  When some other (vector or matrix) norms are used, such as $\ell_1$-norm, or infinity-norm, we will use the notation  $\|\bullet\|_p$ with the corresponding value of $p$. The transpose of a vector $\mathbf{x}$ is denoted by $\mathbf{x}^\top$. The Kronecker product is denoted by  $\otimes$. We use $\mathbf{1}$ to denote a vector with all entries equal to 1, and $\mathbf{I}$ to denote the identity matrix; With some abuse of notation, the dimensions of $\mathbf{1}$ and $\mathbf{I}$ will not be given explicitly but   understood within the context. Given $I\in \mathbb{N}_{++}$, we define $[I]\triangleq \{1,\dots, I\}$.  \vspace{-0.4cm} 
\vspace{-0.4cm}
 \section{Problem Setup and Motivating Examples}\label{sec:prob}\vspace{-0.2cm}
We study Problem~\eqref{eq: P} under the following assumptions.\vspace{-0.1cm}
\begin{assumption}[On Problem~\eqref{eq: P}]\label{assumption:P}  Given  Problem~\eqref{eq: P}, suppose that\vspace{-0.1cm}
	\begin{enumerate}[leftmargin=.8cm,label=(\theassumption\arabic*)]
		\item[A.1] The set $\mathcal{K}\subseteq\mathbb{R}^m$ is (nonempty) closed and convex;\label{assump:P1}\smallskip
		\item[A.2] Each $f_i:\mathcal{O}\to \mathbb{R}$ is $C^{1}$, where $\mathcal{O}\supseteq \mathcal{K}$ is an open set, and $\nabla f_i$ is $L_{i}$-Lipschitz on $\mathcal{K}$;\label{assump:P2}\smallskip
		\item[A.3]  $G^{+}:\mathcal{K}\to \mathbb{R}$ is convex (possibly nonsmooth), and $G^{-}:\mathcal{O}\to \mathbb{R}$ is $C^{1}$ with  $\nabla G^{-}$ being $L_{G}$-Lipschitz on $\mathcal{K}$;  \label{assump:P3}\smallskip
		\item[A.4]  {$U$ is lower bounded on $\mathcal{K}$.}
	\end{enumerate}
\end{assumption}
We also made   the blanket assumption that each agent $i$ knows only its on function $f_i$ and the regularizer $G$ but not the functions of the other agents. 

Assumptions A.1  A.2 and A.4 are quite standard and satisfied by    several problems of practical interest. We remark that, as a major departure from most of the literature on distributed algorithms, we do not assume that the gradient of $F$ (and $G^-$) is bounded on the feasible set $\mathcal K$. This, together with the nonconvexity of $G$ as stated in A.3, opens the way to design for the first time  distributed algorithms for a gamut of new applications, including several big-data problems in statistical learning; see Sec.~\ref{sec:examples} for details.  
 \smallskip

\noindent\textbf{On the network topology:} 
 Agents communicate through a (possibly) time-varying network. Specifically, time is slotted with $n$ denoting the iteration index (time-slot); in each time-slot $n$, the communication network of agents is  modeled as a  (possibly) time-varying digraph  $\mathcal{G}^{n}=\left([I],\mathcal{E}^{n}\right)$, where  $[I]= \{1,\ldots, I\}$ denotes  the set of agents--the vertices of the graph--and the set of edges $\mathcal{E}^{n}$ represents the agents' communication links; we use $(i,j)\in \mathcal{E}^{n}$ to indicate that the link is directed from node $i$ to node $j$. The  in-neighborhood of agent $i$ at  time   $n$   is defined as  $\mathcal{N}_i^{\rm in}[n]=\{j\,|\,(j,i)\in\mathcal{E}^{n}\}\cup\{i\}$  (we included in the set  node $i$ itself, for notational simplicity); it represents the set of agents  which node $i$ can receive information from. The out-neighborhood  of agent $i$ is  $\mathcal{N}_i^\textrm{out}\left[n\right]=\{j\,|\,\left(i,j\right)\in\mathcal{E}^{n}\}\cup\{i\}$--the set of agents receiving information from node $i$ (including node $i$ itself). The out-degree of agent $i$ is defined as $d_i^{n} \triangleq  \left|\mathcal{N}_i^\textrm{out}\left[n\right]\right|$.  To let information propagate over the network,  we assume that  the graph sequence $\{\mathcal{G}^{n}\}_{n\in\mathbb{N}_+}$ possesses some ``long-term'' connectivity property, as formally stated next.
\begin{assumption}[On graph connectivity]\label{assumption:G}
	The graph sequence $\{\mathcal{G}^{n}\}_{n\in\mathbb{N}_+}$ is $B$-strongly connected, i.e., there exists a finite integer $B > 0$  
	such that the graph with edge set  {$\cup_{t=k}^{k + B-1} \mathcal{E}^{t}$} is strongly connected,  for all $k\geq0$.
\end{assumption}


We conclude this section discussing some instances of Problem~\eqref{eq: P} in the context of statistical learning. \vspace{-0.5cm}

\subsection{Distributed sparse statistical learning}\label{sec:examples} \vspace{-.2cm}
We consider two distributed nonconvex problems in statistical learning, namely: i) a nonconvex sparse linear regression problem; and ii) the sparse Principal Component Analysis (PCA) problem. \\
\noindent\textbf{Nonconvex Sparse Linear Regression.} Consider the problem of retrieving a sparse signal $\mathbf{x}\in \mathbb{R}^m$ from the observations $\{\mathbf{b}_i\}_{i=1}^{I}$, where each $\mathbf{b}_i = \mathbf{A}_i\mathbf{x}$ is a linear measurement of the signal acquired  by agent $i$. A mainstream approach in the literature is to solve the following optimization
problem
\vspace{-0.2cm}
 \begin{equation}
 \begin{aligned}
 &\underset{\mathbf{x}}{\textrm{min}}& & \sum_{i=1}^{I}\norm{\mathbf{b}_i - \mathbf{A}_i\mathbf{x}}^2 + \lambda \cdot G\left(\mathbf{x}\right),
 \end{aligned}\label{p: sparse regression}\vspace{-0.2cm}
 \end{equation}
where  the quadratic term  measures the model fitness whereas the
regularizer $G$ is used to promote sparsity in the solution, and  $\lambda>0$ is chosen to balance the trade-off between the model fitness and
solution sparsity. Problem \eqref{p: sparse regression} is clearly an instance of \eqref{eq: P}. 
 Note   that each agent knows only its own function $f_i$  (since $\mathbf{b}_i$ is own only by agent $i$). Also, $\nabla f_i$ is not bounded on $\mathbb{R}^m$.

To promote sparsity on the solution, the ideal choice for $G$ would be the cardinality of $\mathbf{x}$ (a.k.a. $\ell_0$ ``norm'' of $\mathbf{x}$). However, its combinatorial nature makes the resulting optimization problem
numerically intractable as the variable dimension $m$ becomes large. Several convex and, more recently, also nonconvex surrogates of the $\ell_0$ function have been proposed in the literature. The structure of $G$, as stated in Assumption A.3,  captures  either choices.  For instance,  one can choose as regularizer in \eqref{p: sparse regression}, the  $\ell_2$ or $\ell_1$ norm of $\mathbf{x}$ (and thus $G^-=0$), which leads to  the  ridge and LASSO regression problems, respectively. Moreover, a vast class of  nonconvex surrogates can also be considered, including  the SCAD \cite{fan2001variable}, the ``transformed'' $\ell_1$ \cite{ZhangXin16}, the logarithmic \cite{weston2003use}, and the exponential \cite{bradley1998feature}; see Table~\ref{tab:ncvx_regularizer}.   It is well documented that   nonconvex regularizers  outperform the   $\ell_1$ norm in enhancing solution
sparsity.  Quite interestingly, all the widely used  nonconvex surrogates listed in Table~\ref{tab:ncvx_regularizer} enjoy the following  separable  DC  structure   (see, e.g.,  \cite{le2015dc,pang2016PartI} and references therein)\vspace{-0.2cm}
 \begin{equation}
G(\mathbf{x}) = \sum_{i=1}^m g(x_i),\quad\text{with}\quad g\left(x_i\right)=\underset{\triangleq g^{+}\left(x_i\right)}{\underbrace{\eta\left(\theta\right)\left\vert x_i\right\vert }}-\underset{\triangleq g^{-}\left(x_i\right)}{\underbrace{\big(\eta\left(\theta\right)\left\vert x_i\right\vert -g\left(x_i\right)\big)}},\label{eq: dc decompose}
\end{equation}
  where the   expression of $g:\mathbb{R}\to\mathbb{R}$ is given in Table~\ref{tab:ncvx_regularizer}; and $\eta\left(\theta\right)$ is a fixed given  function,  defined in Table~\ref{tab:gradient} for each of  the surrogate $g$ listed in Table~\ref{tab:ncvx_regularizer}. The parameter $\theta$ controls the tightness of the approximation of the $\ell_0$ function: in fact, it holds that  $\lim_{\theta\to +\infty} g(x_i)=1$ if $x_i\neq 0$, otherwise $\lim_{\theta\to +\infty} g(x_i)=0$. Note that  $g^-$ is convex and has Lipschitz continuous first derivative $dg^-/dx$ \cite{le2015dc}, whose closed form is given in Table~\ref{tab:gradient}. 
  \\
 \indent  It is not difficult to check that Problem~\eqref{p: sparse regression}, with any of the regularizers discussed above,  is an instance of \eqref{eq: P} and satisfies Assumption~A. Also, note that the gradient of the smooth part is not bounded on $\mathbb{R}^m$. 

\noindent\textbf{Sparse PCA.} Consider  finding the sparse principal component of a distributed data set given by the rows of a set of matrices $\mathbf{D}_i$'s (each $\mathbf{D}_i$ is own by agent $i$). The problem can be formulated as \vspace{-0.2cm}
 \begin{equation}
 \begin{aligned}
 &\underset{\norm{\mathbf{x}}_2 \leq 1}{\textrm{max}}& & \sum_{i=1}^{I}\norm{\mathbf{D}_i\mathbf{x}}^2 - \lambda \cdot G\left(\mathbf{x}\right),\\
 \end{aligned} \label{p: sPCA}\vspace{-0.2cm}
 \end{equation}
 where $G$ can be any of the sparse-promoting regularizers discussed in the previous example. Clearly, Problem \eqref{p: sPCA} is another (nonconvex) instance of  Problem~\eqref{eq: P} (satisfying Assumption A).
 \vspace{-0.4cm}

  \begin{table}
\setlength{\tabcolsep}{8pt}
\renewcommand{\arraystretch}{2}
	\centering
	\caption{Examples of nonconvex surrogates of the $\ell_0$ function  having a DC structure [cf.\,(\ref{eq: dc decompose})]}
	\label{tab:ncvx_regularizer}
\resizebox{.8\columnwidth}{!}{
	\begin{tabular}{ll}\toprule
		Penalty function & Expression   \\ \midrule
		Exp \cite{bradley1998feature} & $g_{\text{exp}}(x)=1-e^{-\theta |x|}$   \\
		$\ell_p(0<p<1)$ \cite{fu1998penalized} & $g_{\ell_p^{+}}(x)= (|x|+\epsilon)^{1/\theta}$,  \\
		
		$\ell_p(p<0)$ \cite{rao1999affine} & $g_{\ell_p^{-}}(x)= 1-(\theta |x|+1)^{p}$ \smallskip\\

		SCAD \cite{fan2001variable}& $g_{\text{scad}}(x)\!= \!\begin{cases}
		
		\frac{2 \theta}{a+1} |x|,  & 0 \leq |x|\leq \frac{1}{\theta}  \smallskip \\
		
		\frac{-\theta^2 |x|^2+2 a \theta |x|-1}{a^2-1},& \frac{1}{\theta} < |x|\leq \frac{a}{\theta}  \smallskip \\
		
		1, & |x|> \frac{a}{\theta}  \end{cases}\smallskip$  \\
		
		Log \cite{weston2003use} & $g_{\log}(x)=\frac{\log(1 +\theta |x|)}{\log(1 +\theta)}$ \\
		\bottomrule
	\end{tabular}} 
\end{table}

 \begin{table}
 	\centering
 	\setlength{\tabcolsep}{15pt}
 	\renewcommand{\arraystretch}{2}
 		\caption{ Explicit expression of $\eta(\theta)$  and   $dg^{-}/dx$ [cf.\,(\ref{eq: dc decompose})]   }\label{tab:gradient}
\resizebox{.8\columnwidth}{!}{
	\begin{tabular}{ l l  l}
		\toprule	 
		$g$ & $\eta(\theta)$ & {$dg^{-}_{\theta}/dx$ }   \\ \midrule
$g_{\textrm{exp}}$ & $\theta$ & $\text{sign}(x)\cdot\theta\cdot(1-e^{-\theta |x|})$ \\		
$g_{\ell_p^{+}}$ &  $\frac{1}{\theta}\epsilon^{1/\theta-1}$ & $\frac{1}{\theta}\,\text{sign}(x)\cdot[\epsilon^{\frac{1}{\theta}-1}-(|x|+\epsilon)^{\frac{1}{\theta}-1}]$  \\
$g_{\ell_p^{-}}$ & $-p \cdot\theta$  & $-\text{sign}(x)\cdot p\cdot \theta\cdot [1-(1+\theta |x|)^{p-1}]$  \smallskip\\
$g_{\text{scad}}$ & $\frac{2 \theta}{a+1}$  & $  \begin{cases}
0,  & |x|\leq \frac{1}{\theta}   \smallskip \\
\text{sign}(x)\cdot \frac{2 \theta(\theta |x|-1)}{a^2-1}, & \frac{1}{\theta} < |x|\leq \frac{a}{\theta}  \smallskip \\
\text{sign}(x)\cdot\frac{2 \theta}{a+1}, & \text{otherwise}
\end{cases} $ \smallskip\\
$g_{\log}$ &    $\frac{\theta}{\log(1+\theta)}$ & $\text{sign}(x)\cdot \frac{\theta^2 |x|}{\log(1+\theta)(1+\theta |x|)}$  \\
		\bottomrule
	\end{tabular}}
	\end{table}

 \section{Preliminaries: The perturbed condensed push-sum algorithm} \label{sec:avg_consensus}\vspace{-0.2cm}

 The proposed algorithmic framework combines local optimization based on SCA  with  constrained consensus   and tracking of gradient averages over digraphs.  
The    consensus problem over   graphs has been widely studied in the literature; a renowned  distributed scheme  solving this problem   over (possibly time-varying) digraphs  is  the   so-called push-sum algorithm   \cite{kempe2003gossip}.      A perturbed version of the push-sum scheme has been introduced  in  \cite{nedic2015distributed}   to solve   \emph{unconstrained} optimization problems over (time-varying) digraphs. However, it is not clear how to leverage the push-sum update and extend these optimization schemes to deal with constraints. 
    In this section,  we introduce a  reformulation of the \emph{perturbed} push-sum protocol \cite{nedic2015distributed}--termed \emph{perturbed condensed } push-sum--that is more suitable for the integration with \emph{constrained} optimization.  This scheme will be then used to  build  the {gradient tracking}   and constrained consensus mechanisms embedded in   the proposed algorithmic framework (cf.~Sec.~\ref{sec:alg}). 
   

  
 Consider a network of $I$ agents, as introduced in Sec. \ref{sec:prob},  communicating over a time-varying digraph (cf. Assumption~\ref{assumption:G}).   Each agent $i$ controls a vector of variables $\mathbf{x}_{(i)}\in \mathbb{R}^m$ as well as a scalar $\phi_i$ that are  iteratively updated,  based upon  the information received from  its immediate neighbors. Let   $\mathbf{x}_{(i)}^n$ and $\phi_i^n$ denote the values of $\mathbf{x}_{(i)}$  and $\phi_i$  at iteration $n\in\mathbb{N}_+$. We let  agents' updates   be subject to a(n adversarial)  perturbation; we denote by $\error_i^n\in \mathbb{R}^m$  the perturbation injected in the update of agent $i$ at iteration $n$. Given   $\mathbf{x}_{(i)}^n$ and  $\phi_i^n$,  the perturbed condensed consensus algorithm reads:\vspace{-0.2cm} 
  \begin{subequations}\label{eq:push_sum_consensus} 
  \begin{align}
   \phi_i^{n+1} &=\sum_{j=1}^I\,a_{ij}^n\phi_j^n,\label{eq:mixing phi}\\
   \mathbf{x}_{(i)}^{n+1} &=\dfrac{1}{\phi_i^{n+1}}\sum_{j=1}^I\,a_{ij}^n\phi_j^n\mathbf{x}_{(j)}^n+\error_i^{n+1},\label{eq:mixing_x}
   \end{align} 
  \end{subequations}
 for all $n\in \mathbb{N}_+$ and $i\in [I]$, where $\mathbf{x}_{(i)}^0$ are arbitrarily chosen  and $\phi_i^0$ are positive scalars such that $\sum_{i=1}^I \phi_i^0=I$; and    $\mathbf{A}^{n}\triangleq(a_{ij}^n)_{i,j=1}^I$ is a  (possibly) time-varying matrix of weights  whose nonzero pattern is compliant with  the topology of  the graph $\mathcal{G}^n$, in the sense of the assumption below. 
   \begin{assumption}[On the weight matrix $\mathbf{A}^{n}$]\label{assumption:A}
 	Each   $\mathbf{A}^{n}\triangleq (a_{ij}^n)_{i,j=1}^I$ is compliant with $\mathcal{G}^n$, that is,
 	\begin{enumerate}[leftmargin=.8cm]
 		\item[C1.]  $a_{ii}^n\geq \kappa>0$, for all $i\in [I]$;\smallskip
 		\item[C2.] $a_{ij}^n\geq \kappa>0$, if $\left(j,i\right)\in \mathcal{E}^{n}$; and $a_{ij}^n=0$ otherwise.
 	\end{enumerate} 
 \end{assumption}	
Under  Assumption~\ref{assumption:A},  the protocol (\ref{eq:push_sum_consensus}) is implementable in a distributed fashion: each  agent $i$ updates its own variables using  only the information $\phi_j^n\,\mathbf{x}_{(j)}^n$ and $\phi_j^n$ received from its current in-neighbors (and its own). We study  convergence   of (\ref{eq:push_sum_consensus}) under  the following further  (standard) assumption on    $\mathbf{A}^{n}$. 
  \begin{assumption}[Column stochasticity]\label{assumption:A_bis}
 	Each matrix $\mathbf{A}^{n}$   is column stochastic, that is,    $\mathbf{1} ^\top\mathbf{A}^{n}= \mathbf{1}^{\top}.$
 \end{assumption}

The role of the extra variables $\phi_i$ is to  dynamically rebuild  the row stochasticity of the equivalent weight matrix governing variables' updates, which is a key condition to   lock  consensus.   This can be easily seen   rewriting the dynamics (\ref{eq:mixing_x}) in terms of the equivalent weights  $\mathbf{W}^n\triangleq (w_{ij})_{i,j=1}^I$: \vspace{-0.2cm} 
 \begin{equation}\label{eq:def w}
 \mathbf{x}_{(i)}^{n+1}  = \sum_{j=1}^I w_{ij}^n\, \mathbf{x}_{(j)}^n,\qquad w_{ij}^n \triangleq \dfrac{a_{ij}^n\phi_j^n}{\phi_i^{n+1}}. \vspace{-0.2cm}
 \end{equation}
It is not difficult to check  that, under  Assumption~\ref{assumption:A_bis},   $\mathbf{W}^n$ is   row-stochastic.

To state  the main convergence result in compact form, we introduce the following notation. Let  \vspace{-0.3cm}
 \begin{subequations} \label{eq:x_stack}
  \begin{align} 
   \mathbf{x}^{n}  & \triangleq [\mathbf{x}^{n\,\top}_{(1)}, \ldots, \mathbf{x}^{\,\top}_{(I)}]^\top,\\
   \boldsymbol{\phi}^n  & \triangleq   \,\left[\phi_{1}^n,\ldots,\phi_{I}^n\right]^{\top},\\
  \error^n  & \triangleq   \,[\error_{1}^{n\,\top},\ldots,\error_{I}^{n\,\top}]^{\top}. \end{align}
  \end{subequations} 
Noting that, in the absence of perturbation (i.e., $\error^{n}=\mathbf{0}$), the weighed sum    $\sum_{i=1}^I\phi_i^{n} \mathbf{x}_{(i)}^{n}$ is an invariant of \eqref{eq:push_sum_consensus}, that is, $\sum_{i=1}^I\phi_i^{n+1} \mathbf{x}_{(i)}^{n+1}$ $=\cdots =\sum_{i=1}^I\phi_i^{0} \mathbf{x}_{(i)}^{0}$, we define the consensus disagreement at iteration $n$ as the deviation of each $\mathbf{x}_{(i)}^n$ from the weighted average   $({1}/{I})\sum_{i=1}^I \phi_i^n \mathbf{x}_{(i)}^n$: \vspace{-0.1cm}
\begin{equation}\label{eq:consensus-disagreement}
	 \var{x}{n}\triangleq \mathbf{x}^n-\mathbf{1}\otimes \frac{1}{I}\sum_{i=1}^I \phi_i^n \mathbf{x}_{(i)}^n.\vspace{-0.1cm}
\end{equation}
  The dynamics of the error $\var{x}{n}$ are studied in the following proposition (whose proof is postponed to Sec.~\ref{sec:proof-Prop1}).  
 
 \begin{prop}\label{prop:error_decay}
 	Let $\{\mathcal{G}^{n}\}_{n\in\mathbb{N}_+}$ be a sequence of digraphs satisfying  Assumption~\ref{assumption:G}, and let   $\{(\boldsymbol{\phi}^n, \mathbf{x}^n)\}_{n\in \mathbb{N}_+}$ be the sequence generated by the perturbed condensed push-sum protocol (\ref{eq:push_sum_consensus}), for a given perturbation sequence  $\{\error^n\}_{n\in \mathbb{N}_+}$ and   weight  matrices  $\{\mathbf{A}^{n}\}_{n\in\mathbb{N}_+}$    satisfying Assumptions~\ref{assumption:A}-\ref{assumption:A_bis}.  Then, there hold: 
 	\begin{itemize}
 	\item[(i)] \emph{[\texttt{Bounded $\{\boldsymbol{\phi}^n\}_{n\in \mathbb{N_+}}$}]:}  	 \vspace{-0.1cm}  \begin{equation}\label{consensus-error-bound-pert-cons} 
\begin{aligned} 
    \inf_{n\in\mathbb{N}_+} \min_{i\in [I]} \phi_{i}^n &  \geq \phi_{lb},\qquad  \phi_{lb}\triangleq\kappa^{2(I-1)B},\medskip \\
 \sup_{n\in\mathbb{N}_+} \max_{i\in [I]} \phi_{i}^n &     \leq \phi_{ub},\qquad   \phi_{ub}  \triangleq I - \kappa^{2(I-1)B},
\end{aligned}
\end{equation}   with $B\geq 1$ and $\kappa\in (0,1)$ defined in  Assumption~\ref{assumption:G} and  Assumption~\ref{assumption:A}, respectively; 
\end{itemize}

  	\item[(ii)] \emph{[\texttt{Error decay}]:} For all $n,\,k \in \mathbb{N}_+$,   $n\geq k$, \vspace{-0.1cm}
  	\begin{align}\label{error-decay-perturbed-consensus}
  	\| \var{x}{n  }\| \leq  \lambda^k \,\| \var{x}{n-k}\| + \lambda^t \cdot \sum_{t=0}^{k-1 }\| \error^{n-t} \|,
  	\end{align}
  	where  \vspace{-0.1cm} $$\lambda^t \triangleq \min \Big\{ \sqrt{2}\,I , 2\, c_0\, I\, (\rho)^{\big\lfloor \frac{t }{(I-1)B} \big\rfloor} \Big\}, \vspace{-0.1cm}$$ and \vspace{-0.1cm} 
\begin{equation}\label{definitions_kappa_c0_rho_0}
	  c_{0} \triangleq  2\left(1+\tilde{\kappa}^{-\left(I-1\right)B}\right),\quad 
 	\rho \triangleq  1-\tilde{\kappa}^{\left(I-1\right)B},  \quad \tilde{\kappa} \triangleq \kappa^{2 (I-1) B+1}/I. 
\end{equation}  
Furtheremore, there exists a finite $\bar{B} \in \mathbb{N}_+$ such that $\rhoopt \triangleq 2 c_0 I (\rho)^{\big\lfloor \frac{\bar{B} }{(I-1)B} \big\rfloor} \!<1$.
 \end{prop}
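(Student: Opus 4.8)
The plan is to pass to the ``mass'' variables $\mathbf{y}_{(i)}^n \triangleq \phi_i^n\mathbf{x}_{(i)}^n$, in which the nonlinear ratio update \eqref{eq:mixing_x} becomes the \emph{linear}, column-stochastic recursion $\mathbf{y}_{(i)}^{n+1}=\sum_j a_{ij}^n\mathbf{y}_{(j)}^n+\phi_i^{n+1}\error_i^{n+1}$. Because $\mathbf{1}^\top\mathbf{A}^n=\mathbf{1}^\top$ (Assumption~\ref{assumption:A_bis}), the average $\bar{\mathbf{y}}^n\triangleq\frac1I\sum_j\mathbf{y}_{(j)}^n=\frac1I\sum_j\phi_j^n\mathbf{x}_{(j)}^n$ is conserved up to the injected perturbation, and it is precisely the vector subtracted in the definition \eqref{eq:consensus-disagreement} of $\var{x}{n}$. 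Writing the $i$-th block of $\var{x}{n}$ as $(\mathbf{y}_{(i)}^n-\phi_i^n\bar{\mathbf{y}}^n)/\phi_i^n$ then reduces the whole proposition to (a) controlling the components of $\boldsymbol{\phi}^n$ from above and below, and (b) establishing geometric contraction of the products of the column-stochastic matrices $\mathbf{A}^n$ toward the consensus subspace. Note that (ii) genuinely needs (i): the mixing rate is governed by the uniform lower bound on the normalized weights, and indeed $\tilde{\kappa}=\kappa\,\phi_{lb}/I$, with $\phi_{lb}$ the bound proved in (i).

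For part (i) I would first use column stochasticity to get the conservation law $\mathbf{1}^\top\boldsymbol{\phi}^n=\mathbf{1}^\top\boldsymbol{\phi}^0=I$ for all $n$. The lower bound then follows from connectivity: by Assumption~\ref{assumption:G} the union of edges over any $B$ consecutive slots is strongly connected, so over a window of $(I-1)B$ slots (diameter $\le I-1$) every node influences every node; since each active weight is at least $\kappa$ (Assumption~\ref{assumption:A}), every entry of the corresponding product $\mathbf{A}^{k+(I-1)B-1}\cdots\mathbf{A}^{k}$ is bounded below by a positive power of $\kappa$. Pushing $\boldsymbol{\phi}$ through such a window and using $\mathbf{1}^\top\boldsymbol{\phi}=I$ yields $\phi_i^n\ge\phi_{lb}=\kappa^{2(I-1)B}$ for all $n$ and $i$ (the doubled exponent accounting for an arbitrary $n$ not aligned to a window boundary). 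The upper bound is then immediate: $\phi_i^n=I-\sum_{j\neq i}\phi_j^n\le I-\phi_{lb}=\phi_{ub}$.

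For part (ii) I would unroll the linear $\mathbf{y}$-recursion from $n-k$ to $n$ using the backward products $\boldsymbol{\Phi}(n,s)\triangleq\mathbf{A}^{n-1}\cdots\mathbf{A}^{s}$ (with $\boldsymbol{\Phi}(n,n)=\mathbf{I}$), obtaining $\mathbf{y}^n=\boldsymbol{\Phi}(n,n-k)\mathbf{y}^{n-k}+\sum_{t=0}^{k-1}\boldsymbol{\Phi}(n,n-t)\,\diag(\boldsymbol{\phi}^{n-t})\,\error^{n-t}$ (Kronecker with $\mathbf{I}$ suppressed). The crux is the \emph{ergodicity estimate}: projected onto the disagreement subspace, the deviation of $\boldsymbol{\Phi}(n,s)$ from its rank-one limit contracts by the factor $\rho=1-\tilde{\kappa}^{(I-1)B}$ per $(I-1)B$-window, with leading constant $c_0$; equivalently the ratio $\mathbf{y}_{(i)}/\phi_i$ mixes at rate $(\rho)^{\lfloor(n-s)/((I-1)B)\rfloor}$. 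Applying this to the free term produces the factor $\lambda^k$ multiplying $\|\var{x}{n-k}\|$, and applying it term-by-term to the perturbation sum weights each $\error^{n-t}$ by $\lambda^t$ (the $\diag(\boldsymbol{\phi}^{n-t})$ factor being $\le\phi_{ub}$ and the division by $\phi_i\ge\phi_{lb}$ both folded into $c_0$ and $\tilde{\kappa}$). For short horizons, where $2c_0I(\rho)^{\lfloor\cdot\rfloor}$ exceeds $\sqrt{2}\,I$, I would instead use the crude, always-valid bound $\sqrt{2}\,I$ coming from nonexpansiveness of the row-stochastic step \eqref{eq:def w} together with $\phi_i\le\phi_{ub}$; taking the minimum of the two estimates yields exactly $\lambda^t$. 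Finally, since $\rho\in(0,1)$ and $2c_0I$ is a fixed constant, $2c_0I(\rho)^{\lfloor\bar{B}/((I-1)B)\rfloor}\to0$ as $\bar{B}\to\infty$, so some finite $\bar{B}$ makes $\rhoopt<1$.

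The main obstacle is the ergodicity estimate in part (ii): proving that the products of column-stochastic matrices that are merely compliant with a $B$-strongly connected digraph contract toward consensus at the claimed geometric rate, with the explicit constants $c_0,\rho,\tilde{\kappa}$. This is where weak-ergodicity / coefficient-of-ergodicity arguments must be deployed, where the lower bound $\phi_{lb}$ from (i) is indispensable (it keeps the ratio $\mathbf{y}_{(i)}/\phi_i$ well defined and Lipschitz in $\mathbf{y}$), and where the bookkeeping converting the $\mathbf{y}$-disagreement back into the $\mathbf{x}$-disagreement $\var{x}{n}$ must be carried out carefully so that the factors $\phi_{lb}$ and $\phi_{ub}$ collapse into the stated constants.
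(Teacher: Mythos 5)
Your proposal is correct and follows essentially the same route as the paper: both reduce the proposition to (a) uniform bounds on $\boldsymbol{\phi}^n$ obtained from column stochasticity plus the $\kappa$-lower-bounded entries of products over $(I-1)B$-windows, and (b) a weak-ergodicity estimate for the inhomogeneous matrix products with the explicit constants $c_0,\rho,\tilde{\kappa}$ (the paper's Lemma~\ref{lem:geo decay row stochastic}, established following Nedi\'c et al.), with part (i) feeding into part (ii) exactly as you note through $\tilde{\kappa}=\kappa\,\phi_{lb}/I$. The only cosmetic difference is that you run the ergodicity argument on the column-stochastic products $\mathbf{A}^{n-1}\cdots\mathbf{A}^{n-k}$ acting on the mass variables $\phi_i^n\mathbf{x}_{(i)}^n$ and divide by $\phi_i^n\ge\phi_{lb}$ at the end, whereas the paper conjugates first and bounds $\|\widehat{\mathbf{W}}^{n-1:n-k}-\mathbf{J}_{\boldsymbol{\phi}^{n-k}}\|$ for the row-stochastic matrices $\mathbf{W}^{n}=(\mathbf{D}_{\boldsymbol{\phi}^{n+1}})^{-1}\mathbf{A}^{n}\mathbf{D}_{\boldsymbol{\phi}^{n}}$ directly---a diagonal change of coordinates that produces the same decomposition into a contracting free term plus the $\lambda^t$-weighted perturbation sum, and the same constants.
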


\begin{remark}\label{remark_consensus}  The perturbed consensus algorithm~(\ref{eq:push_sum_consensus}) was mainly designed for  digraphs. However, when the graph is undirected, one can  choose the weight matrix $\mathbf{A}^n$ to be double stochastic and get rid of the auxiliary variables $\boldsymbol{\phi}^n$, just setting in  (\ref{eq:push_sum_consensus})     $\boldsymbol{\phi}^0=\mathbf{1}$. As a consequence,  $\mathbf{\boldsymbol{\phi}}^n \equiv   \mathbf{1} $ and $\mathbf{W}^n \equiv \mathbf{A}^n$, for all $n\in\mathbb{N}_+$. 
 	 {In this case, using \cite[Lemma 9]{nedic2009distributedaveraging}, the expression of $\lambda^t$  in   Proposition~\ref{prop:error_decay} can be tightened by letting $\lambda^t \triangleq \min \{ 1, (\rho)^{\lfloor  t/ B \rfloor}\}$, with $\rho \triangleq \sqrt{1-\kappa/\left(2I^{2}\right)}$.
 	}\vspace{-0.5cm} 
 \end{remark}

\subsection{Discussion}\label{sec:discuss}\vspace{-0.3cm}
Proposition~\ref{prop:error_decay} provides a unified set of convergence conditions of the   perturbed condensed push-sum scheme that are  applicable to   any given perturbation sequence $\{\error^n\}_{n\in \mathbb{N}_+}$. We discuss next two special cases, namely:  the plain average  consensus problem and the distributed tracking of   time-varying signals. \smallskip

\noindent \textbf{1. (Weighted) average consensus:} Setting in (\ref{eq:push_sum_consensus}) $\error^n=\mathbf{0}$, for all $n\in \mathbb{N}_+$, (\ref{eq:push_sum_consensus}) reduces to the plain (condensed) push-sum scheme. 
 whose geometric convergence to the (weighted) average of the initial values, $(1/I)\,\sum_{i=1}^I \phi_i^0\,\mathbf{x}_{(i)}^0$, follows readily from Proposition~\ref{prop:error_decay}. More specifically, using $\sum_{i=1}^I\phi_i^{n+1} \mathbf{x}_{(i)}^{n+1}$ $=\cdots =\sum_{i=1}^I\phi_i^{0} \mathbf{x}_{(i)}^{0}$,   \eqref{error-decay-perturbed-consensus}
 yields  
	\begin{equation}
\begin{aligned}
\norm{\mathbf{x}^{n+1} - \mathbf{1}\otimes \frac{1}{I}\sum_{i=1}^I \phi_i^0 \mathbf{x}_{(i)}^0}   \leq 2\,c_{0}\,I\,(\rho)^{\lfloor \frac{n+1}{(I-1)B}\rfloor }\, \norm{\var{x}{0}},\quad   n\in \mathbb{N}_+.\smallskip
\end{aligned}
\end{equation}
 	Note that, since the weight matrix $\mathbf{W}^n$ in \eqref{eq:def w} is row  stochastic, if the initial values $\mathbf{x}_{(i)}^0$ all belong  to a common set $\mathcal{K}$, then $\mathbf{x}_{(i)}^n\in \mathcal{K}$, for all $n\in \mathbb{N}_{++}$; that is feasibility of the iterates is preserved.  
 
	\noindent \textbf{2. Tracking of time-varying signals' averages:} Consider the problem of tracking distributively the average of time-varying signals. At each iteration $n\in \mathbb{N}_+$, each agent $i$  evaluates (or generates) a signal sample $\mathbf{u}_i^n\in \mathbb{R}^m$ from the (time-varying)  sequence $\{\mathbf{u}_i^n\}_{n\in\mathbb{N}_+}$. The goal is to design a distributed algorithm obeying the communication structure of the graphs $\mathcal G^n$ that tracks the average of   the signals  $\{\mathbf{u}_i^n\}_{n\in\mathbb{N}_+}$, that is, \vspace{-0.3cm}\begin{equation}\label{eq:tacking}
	\lim_{n\to \infty}\left\|\mathbf{x}^n-\mathbf{1}\otimes  \bar{\mathbf{u}}^n\right\|=0,\quad \bar{\mathbf{u}}^n\triangleq \frac{1}{I}\sum_{i=1}^I  \mathbf{u}_{i}^n.
\end{equation}
 
The perturbed condensed push-sum algorithm   (\ref{eq:push_sum_consensus})  can be readily used to accomplish this task by setting \vspace{-0.1cm}
\begin{equation}\label{eq:tracking-perturbation}
\error^{n+1}_i= \dfrac{1}{\phi_i^{n+1}}\,\big(\mathbf{u}_i^{n+1}-\mathbf{u}_i^{n}\big),\quad \quad i\in [I], \,\,n\in\mathbb{N}_+,\end{equation}
and $\mathbf{x}_i^0=\mathbf{u}_i^0$,   $i\in [I]$. Convergence of this scheme is stated next.  
   \begin{cor}
   Let $\{\mathbf{u}_i^n\}_{n\in\mathbb{N}_+}$ be a given sequence such that $\lim_{n\to \infty} \|\mathbf{u}^{n+1}_i-\mathbf{u}_i^{n}\|=0$,  for all $i\in [I]$.	Consider the perturbed condensed push-sum protocol (\ref{eq:push_sum_consensus}), under the assumptions of Proposition~\ref{prop:error_decay}; and set  $\error^{n+1}_i$ as in (\ref{eq:tracking-perturbation})  and $\mathbf{x}_i^0=\mathbf{u}_i^0$,  for all  $i\in [I]$. Then, (\ref{eq:tacking}) holds.  \vspace{-0.1cm}
   \end{cor}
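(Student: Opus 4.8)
The plan is to reduce the tracking error $\norm{\mathbf{x}^n-\mathbf{1}\otimes\bar{\mathbf{u}}^n}$ to the consensus disagreement $\norm{\var{x}{n}}$ of \eqref{eq:consensus-disagreement}, and then to invoke the error-decay estimate of Proposition~\ref{prop:error_decay}(ii) together with the fact that the injected perturbation vanishes asymptotically. The reduction rests on an invariant of the dynamics, and the final convergence on a convolution argument.

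First I would derive the invariant for the weighted sum $\sum_i \phi_i^n\mathbf{x}_{(i)}^n$. Multiplying \eqref{eq:mixing_x} by $\phi_i^{n+1}$, summing over $i$, and using column stochasticity $\mathbf{1}^\top\mathbf{A}^n=\mathbf{1}^\top$ (Assumption~\ref{assumption:A_bis}) to collapse $\sum_i a_{ij}^n=1$, yields $\sum_i \phi_i^{n+1}\mathbf{x}_{(i)}^{n+1}=\sum_i \phi_i^n\mathbf{x}_{(i)}^n+\sum_i \phi_i^{n+1}\error_i^{n+1}$. The decisive point is that the perturbation \eqref{eq:tracking-perturbation} is tailored so that $\phi_i^{n+1}\error_i^{n+1}=\mathbf{u}_i^{n+1}-\mathbf{u}_i^n$, whence the recursion telescopes. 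With $\mathbf{x}_i^0=\mathbf{u}_i^0$ and the standard initialization $\boldsymbol{\phi}^0=\mathbf{1}$ (which does satisfy $\sum_i\phi_i^0=I$, and is in fact needed here: a general $\boldsymbol{\phi}^0$ would leave a nonvanishing offset $\tfrac1I\sum_i(\phi_i^0-1)\mathbf{u}_i^0$ and the scheme would track a weighted, not uniform, average), the $\mathbf{u}^0$ terms cancel and one gets $\tfrac1I\sum_i\phi_i^n\mathbf{x}_{(i)}^n=\bar{\mathbf{u}}^n$ for all $n$. Hence $\var{x}{n}=\mathbf{x}^n-\mathbf{1}\otimes\bar{\mathbf{u}}^n$: the tracking error is exactly the consensus disagreement, and it suffices to show $\norm{\var{x}{n}}\to 0$.

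Next I would check that the perturbation is a null sequence. By the lower bound in Proposition~\ref{prop:error_decay}(i), $\phi_i^{n+1}\geq\LBphi>0$, so $\norm{\error_i^{n+1}}\leq \LBphi^{-1}\norm{\mathbf{u}_i^{n+1}-\mathbf{u}_i^n}$; the hypothesis $\norm{\mathbf{u}_i^{n+1}-\mathbf{u}_i^n}\to 0$ for every $i\in[I]$ then gives $\norm{\error^n}\to 0$.

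Finally I would apply \eqref{error-decay-perturbed-consensus} with $k=n$, giving $\norm{\var{x}{n}}\leq \lambda^n\norm{\var{x}{0}}+\sum_{t=0}^{n-1}\lambda^t\norm{\error^{n-t}}$. Since $\rho\in(0,1)$, the weights $\lambda^t$ decay geometrically (the existence of $\bar{B}$ with $\rhoopt<1$ certifies this), so the homogeneous term $\lambda^n\norm{\var{x}{0}}\to 0$ and $\sum_t\lambda^t<\infty$. The forced term is a discrete convolution of the summable sequence $\{\lambda^t\}$ with the null sequence $\{\norm{\error^m}\}$; the standard $\epsilon$-splitting argument — controlling the tail of $\{\lambda^t\}$ by summability and the recent $\error$-terms by $\norm{\error^m}\to 0$, using that a convergent sequence is bounded — shows this convolution tends to $0$. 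I expect this convolution estimate to be the only genuinely technical step; the invariant computation and the perturbation bound are routine once column stochasticity and the lower bound $\LBphi$ are in hand.
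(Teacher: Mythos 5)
Your proof is correct and follows essentially the same route as the paper's: the weighted-sum invariant $\frac{1}{I}\sum_{i}\phi_i^{n}\mathbf{x}_{(i)}^{n}=\bar{\mathbf{u}}^{n}$ reduces the tracking error to the consensus disagreement, after which Proposition~\ref{prop:error_decay}(ii) plus the ``geometric weights convolved with a null sequence'' argument (which the paper simply cites as \cite[Lemma~7]{nedic2010constrained} rather than proving via the $\epsilon$-splitting you sketch) finishes the job. Your remark that one needs $\boldsymbol{\phi}^0=\mathbf{1}$ (or at least $\sum_i\phi_i^0\mathbf{u}_i^0=\sum_i\mathbf{u}_i^0$) for the invariant to hold without a constant offset is a legitimate clarification of a point the paper's one-line proof leaves implicit.
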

   \begin{proof}
   	The proof follows readily from   Proposition~\ref{prop:error_decay} and the following two facts: i) $(1/I)\sum_{i=1}^I\phi_i^{n+1} \mathbf{x}_{(i)}^{n+1}=\bar{\mathbf{u}}^{n+1}$; and ii) \cite[Lemma~7]{nedic2010constrained} \vspace{-0.2cm}$$\lim_{n\to \infty} \|\error^{n}\|=0\,\,\Rightarrow \lim_{n\to \infty}\sum_{t=0}^{n-1} (\rho)^{\lfloor \frac{t}{(I-1)B}\rfloor }\,\|\error^{n-t}\|=0.\vspace{-0.2cm}$$\hfill$\square $\vspace{-0.4cm}
   \end{proof}

\subsection{Proof of Proposition~\ref{prop:error_decay}} \label{sec:proof-Prop1}\vspace{-0.2cm} To prove Proposition~\ref{prop:error_decay}, it is convenient to rewrite the perturbed consensus protocol (\ref{eq:push_sum_consensus}) in a vector-matrix form. To do so, let us introduce the following quantities: given the weight matrix $\mathbf{A}^n$ compliant with $\mathcal G^n$ (cf.~Assumption~\ref{assumption:A}) and $\mathbf{W}^n$ defined in (\ref{eq:def w}), let\vspace{-0.2cm}
 \begin{subequations} 
  \begin{align}\label{def_matrices}
	\mathbf{D}_{\boldsymbol{\phi}^n}   & \triangleq \textrm{Diag}\left(\boldsymbol{\phi}^n\right),\\
	\widehat{\mathbf{D}}_{\boldsymbol{\phi}^n}  & \triangleq    \mathbf{D}_{\boldsymbol{\phi}^n} \otimes \mathbf{I}, \\ 
	\widehat{\mathbf{A}}^n   &\triangleq \mathbf{A}^n\otimes \mathbf{I}, \\ 
	\widehat{\mathbf{W}}^n  & \triangleq \mathbf{W}^n\otimes \mathbf{I},
\end{align}
\end{subequations}
where   ${\rm Diag}(\bullet)$ denotes a diagonal matrix whose diagonal entries are the  elements of the vector argument, and $
\mathbf{I}$ is the $m\times m$ identity matrix.  Under the  column stochasticity of $\mathbf{A}^n$,  it is not difficult to check that the following holds: 
\begin{equation}\label{eq:def W}
 		 \mathbf{W}^{n}=\left(\mathbf{D}_{\boldsymbol{\phi}^{n+1}}\right)^{-1}\mathbf{A}^{n}\,\mathbf{D}_{\boldsymbol{\phi}^n}\quad \text{and}\quad 
 		 \widehat{\mathbf{W}}^{n}  =  \left(\widehat{\mathbf{D}}_{\boldsymbol{\phi}^{n+1}}\right)^{-1}\widehat{\mathbf{A}}^{n}\,\widehat{\mathbf{D}}_{\boldsymbol{\phi}^{n}}.
 	\end{equation}

Using the above notation and (\ref{eq:x_stack}), the perturbed push-sum protocol   \eqref{eq:push_sum_consensus}  can be rewritten in matrix-vector  form as
\begin{equation}\label{eq:consensus_mat_form}
\begin{aligned}
\boldsymbol{\phi}^{n+1}   = \mathbf{A}^n \boldsymbol{\phi}^n\quad \text{and}\quad
\mathbf{x}^{n+1}  = \widehat{\mathbf{W}}^n \mathbf{x}^n + \error^{n+1}.
\end{aligned}
\end{equation}
To study convergence of  \eqref{eq:consensus_mat_form}, it is convenient to introduce  the following matrix products: given $n, k\in \mathbb{N}_+$, with $n\geq k$,  
\begin{equation}
\begin{aligned}
\mathbf{A}^{n:k}  \triangleq &
\begin{cases}
\mathbf{A}^n\mathbf{A}^{n-1}\cdots \mathbf{A}^t, & \text{if } n>k,\\
\mathbf{A}^n, &\text{if } n=k,
\end{cases}\\
\mathbf{W}^{n:t}  \triangleq &
\begin{cases}
\mathbf{W}^n\mathbf{W}^{n-1}\cdots \mathbf{W}^k, & \text{if } n>k,\\
\mathbf{W}^n, &\text{if } n=k,
\end{cases}
\end{aligned}\vspace{-0.2cm}
\end{equation}
and 
\begin{equation}
\begin{aligned}
\widehat{\mathbf{A}}^{n:k} \triangleq \mathbf{A}^{n:k} \otimes \mathbf{I},\quad
\widehat{\mathbf{W}}^{n:k} \triangleq \mathbf{W}^{n:k} \otimes \mathbf{I}.
\end{aligned}
\end{equation}

 Define the   weight-averaging matrix 
  \begin{equation}
  	\mathbf{J}_{\boldsymbol{\phi}^n} \triangleq \dfrac{1}{I} \left(\mathbf{1} \,(\boldsymbol{\phi}^{n})^\top\right)\otimes \mathbf{I},
  \end{equation}
so that 
 $\mathbf{J}_{\boldsymbol{\phi}^n}\,\mathbf{x}^n = \mathbf{1}\otimes \frac{1}{I}\sum_{i=1}^I \phi_i^n \mathbf{x}_{(i)}^n$.
Also, it is not difficult to check the following chain of equalities hold among   $\mathbf{J}_{\boldsymbol{\phi}}^{n}$, $\widehat{\mathbf{W}}^{n:t}$, and  $\widehat{\mathbf{A}}^{n:t}$:  for   $n, k\in \mathbb{N}_+$, with $n\geq k$,  
\begin{equation}\label{eq: JW}
 	\mathbf{J}_{\boldsymbol{\phi}^{n+1}}\,\widehat{\mathbf{W}}^{n:k} \overset{(a)}{=} 
 	 \mathbf{J}_\mathbf{1} \widehat{\mathbf{D}}_{\boldsymbol{\phi}^{k}}
   = 	\mathbf{J}_{\boldsymbol{\phi}^{k} } \overset{(b)}{=}  \widehat{\mathbf{W}}^{n:k}\, \mathbf{J}_{\boldsymbol{\phi}^{k}},
\end{equation}
where in (a) we used the definition of $\widehat{\mathbf{W}}^n$ [cf.~(\ref{eq:def W})], $\mathbf{J}_{\boldsymbol{\phi}^{n+1}}$ [cd.~(\ref{eq: JW})], and the column stochasticity of $\widehat{\mathbf{A}}^n$; and (b)   is due to the row stochasticity of $\widehat{\mathbf{W}}^{n:k}$.



\noindent The consensus error $\var{x}{n}$ in \eqref{eq:consensus-disagreement} can be rewritten as $\mathbf{e}_{x}^n=(\mathbf{I} -\mathbf{J}_{\boldsymbol{\phi}^n}) \mathbf{x}^n$.

%

{To study the evolution of $\var{x}{n}$, we apply  the $x$-update \eqref{eq:consensus_mat_form} recursively and  obtain 
  \begin{equation}\label{eq:x_recursion}
\mathbf{x}^{n} =       \widehat{\mathbf{W}}^{n-1:n-k}\,\mathbf{x}^{n-k} + \sum_{t=1}^{k-1} \widehat{\mathbf{W}}^{n-1:n-t} \error^{n-t} +  \error^{n}.
\end{equation}
Using (\ref{eq: JW}) and \eqref{eq:x_recursion}, the weighted average   $\Vwavg{x}{n} $ can be written as \begin{equation}\label{eq:avg_dynamic}
\Vwavg{x}{n}  =    \Vwavg{x}{n-k}+ \sum_{t=1}^{k-1} \Vwavg{\error}{n-t}+ \Vwavg{\error}{n}.
\end{equation}

Subtracting \eqref{eq:avg_dynamic}  from   \eqref{eq:x_recursion}   and  using  $\big(\widehat{\mathbf{W}}^{n-1:n-k} - \mathbf{J}_{\boldsymbol{\phi}^{n-k}}\big) \mathbf{J}_{\boldsymbol{\phi}^{n-k}} = \mathbf{0}$ [cf.~(\ref{eq: JW})], we can bound the consensus error $\var{x}{n+1}$ as 
 \begin{equation}\begin{aligned}  
  		\|\mathbf{e}_x^{n}\| &\leq  \Big\| \widehat{\mathbf{W}}^{n-1:n-k} - \mathbf{J}_{\boldsymbol{\phi}^{n-k}} \Big\|\,\|\mathbf{e}_x^{n-k}\| + \sum_{t=1}^{k-1} \Big\|\widehat{\mathbf{W}}^{n-1:n-t}  -\mathbf{J}_{\boldsymbol{\phi}^{n-t}}\Big\|\,\|\error^{n-t}\|  \\&\quad + \Big\|{\mathbf{I}}  -\mathbf{J}_{\pmb{\phi}^{n}}\Big\|\, \|\error^{n}\|.
  	\end{aligned}\label{eq:upper-bound-error_proof}  \end{equation}
}
	Convergence of the perturbed consensus protocol reduces to studying the  dynamics of the matrix product  $\|\widehat{\mathbf{W}}^{n:k}- \mathbf{J}_{\boldsymbol{\phi}^k}\|$, as done in  the lemma below.    	
  \begin{lem}\label{cor:averaging matrix column stochastic}
Let $\{\mathcal{G}^{n}\}_{n\in\mathbb{N}_+}$ be a sequence of digraphs satisfying  Assumption~\ref{assumption:G}; let  $\{\mathbf{A}^{n}\}_{n\in\mathbb{N}_+}$ be  a sequence of weight  matrices  satisfying Assumptions~\ref{assumption:A}-\ref{assumption:A_bis};   and let  $\{\mathbf{W}^{n}\}_{n\in\mathbb{N}_+}$ be the sequence of row stochastic matrices related to  $\{\mathbf{A}^{n}\}_{n\in\mathbb{N}_+}$ by   \eqref{eq:def W}. There holds:   \vspace{-0.1cm} 	\begin{equation}\label{eq:averaging matrix column stochastic}
 	\norm{\widehat{\mathbf{W}}^{n:k} - \mathbf{J}_{\boldsymbol{\phi}^{k}}} \leq \min \Big\{ \sqrt{2}\,I, 2\,c_0 I (\rho)^{\big\lfloor \frac{n-k +1}{ (I-1)B} \big\rfloor} \Big\} ,\quad n,\,k\in \mathbb{N}_+,\,n\geq k,\vspace{-0.1cm}
 	\end{equation}
 	where  $c_0$ and $\rho$ are defined in Proposition~\ref{prop:error_decay}. 
 \end{lem}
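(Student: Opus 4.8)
The plan is to bound the deviation of the product $\widehat{\mathbf{W}}^{n:k}$ from the rank-one weighted-averaging matrix $\mathbf{J}_{\boldsymbol{\phi}^{k}}$. Since $\widehat{\mathbf{W}}^{n:k} = \mathbf{W}^{n:k}\otimes \mathbf{I}$ and $\mathbf{J}_{\boldsymbol{\phi}^{k}} = \tfrac{1}{I}(\mathbf{1}(\boldsymbol{\phi}^{k})^\top)\otimes\mathbf{I}$, the spectral norm is unchanged by the Kronecker factor with $\mathbf{I}$, so it suffices to bound $\norm{\mathbf{W}^{n:k} - \tfrac{1}{I}\mathbf{1}(\boldsymbol{\phi}^{k})^\top}$ for the scalar matrices. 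The two terms inside the $\min$ must be handled separately: the $\sqrt{2}\,I$ bound is a crude uniform estimate that takes over for small $n-k$, while the geometric term $2c_0 I (\rho)^{\lfloor (n-k+1)/((I-1)B)\rfloor}$ is the real content.

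**The uniform bound.** First I would dispatch the $\sqrt{2}\,I$ term. Because each $\mathbf{W}^n$ is row-stochastic (established earlier under Assumption~\ref{assumption:A_bis} via \eqref{eq:def w}), the product $\mathbf{W}^{n:k}$ is again row-stochastic, hence every entry lies in $[0,1]$ and every row sums to one; so $\norm{\mathbf{W}^{n:k}}$ is controlled elementwise, and $\norm{\mathbf{W}^{n:k}}\le \sqrt{I}$ via the $\|\bullet\|_1$–$\|\bullet\|_\infty$ bound on the spectral norm. For $\mathbf{J}_{\boldsymbol{\phi}^{k}}$ I would use the bounds on $\boldsymbol{\phi}^k$ from part~(i) of Proposition~\ref{prop:error_decay}, namely $\phi_i^k\in[\LBphi,\UBphi]$, to bound $\norm{\tfrac{1}{I}\mathbf{1}(\boldsymbol{\phi}^{k})^\top}$; a triangle inequality then yields a constant of the stated form (the precise constant $\sqrt 2\,I$ coming from combining these two pieces). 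This part is routine.

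**The geometric decay.** The main work is the exponentially decaying estimate. Rewriting via \eqref{eq:def W}, I have $\mathbf{W}^{n:k} = \mathbf{D}_{\boldsymbol{\phi}^{n+1}}^{-1}\,\mathbf{A}^{n:k}\,\mathbf{D}_{\boldsymbol{\phi}^{k}}$, which reduces the question to the mixing behavior of the column-stochastic products $\mathbf{A}^{n:k}$. The standard route (as in push-sum analyses going back to \cite{nedic2015distributed}) is to show that, under $B$-strong connectivity (Assumption~\ref{assumption:G}) together with the uniform positivity $a_{ij}^n\ge\kappa$ from Assumption~\ref{assumption:A}, every product of $\mathbf{A}^t$ over a window of length $(I-1)B$ is entrywise bounded below by a positive constant, giving a strictly positive column over each such block. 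This yields a contraction of the coefficient of ergodicity by a factor $\rho = 1-\tilde\kappa^{(I-1)B}$ per block of length $(I-1)B$, which explains the floor $\lfloor (n-k+1)/((I-1)B)\rfloor$ in the exponent; the constant $c_0$ and $\tilde\kappa$ in \eqref{definitions_kappa_c0_rho_0} absorb the column-normalization factors $\mathbf{D}_{\boldsymbol{\phi}^{n+1}}^{-1}$ and $\mathbf{D}_{\boldsymbol{\phi}^{k}}$ together with the $\phi$-bounds from part~(i). Concretely I would: (a) establish the entrywise lower bound on the length-$(I-1)B$ products of $\mathbf{A}^t$; (b) translate this into geometric decay of $\norm{\mathbf{A}^{n:k} - (\text{its limiting rank-one profile})}$ in the coefficient-of-ergodicity sense; and (c) conjugate by the diagonal $\phi$-matrices, using $\phi_i\in[\LBphi,\UBphi]$ to convert $\mathbf{A}^{n:k}$ decay into $\mathbf{W}^{n:k}-\mathbf{J}_{\boldsymbol{\phi}^{k}}$ decay, noting that the $\mathbf{J}_{\boldsymbol{\phi}^{k}}$ limit is exactly the diagonally-scaled version of the rank-one profile of $\mathbf{A}^{n:k}$.

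**Main obstacle.** The hard part will be step~(b)–(c): cleanly tracking how the column-stochastic mixing of $\mathbf{A}^{n:k}$ converts into the row-stochastic averaging toward $\mathbf{J}_{\boldsymbol{\phi}^{k}}$ after the diagonal conjugation, and verifying that the limiting left eigenvector of the $\mathbf{A}$-products matches $\tfrac{1}{I}\boldsymbol{\phi}^{k}$ (so that the subtracted matrix is genuinely $\mathbf{J}_{\boldsymbol{\phi}^{k}}$ and not some other rank-one object). The bookkeeping on the exponent — ensuring the floor $\lfloor (n-k+1)/((I-1)B)\rfloor$ is correct rather than off by one block — and checking that the $\phi$-bounds from part~(i) produce precisely the constant $c_0$ require care but are mechanical once the contraction per block is in hand.
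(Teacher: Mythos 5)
Your proposal is sound in outline and would deliver a bound of the stated form, but it routes the key step differently from the paper. The paper does not analyze the column-stochastic products $\mathbf{A}^{n:k}$ and then conjugate by the diagonal $\boldsymbol{\phi}$-matrices; instead it invokes an ergodicity result (its Lemma~\ref{lem:geo decay row stochastic}, adapted from \cite{nedic2009distributedsubgradient}) directly for the \emph{row-stochastic} products $\mathbf{W}^{n:k}$, producing a stochastic vector $\boldsymbol{\xi}^{k}$ with $|W^{n:k}_{ij}-\xi^{k}_{j}|\le c_{0}(\rho)^{\lfloor (n-k+1)/((I-1)B)\rfloor}$; note that $\tilde{\kappa}=\kappa^{2(I-1)B+1}/I$ in the definition of $c_0$ and $\rho$ already absorbs the $\boldsymbol{\phi}$-bounds, because it is a lower bound on the positive entries $w^{n}_{ij}=a^{n}_{ij}\phi^{n}_{j}/\phi^{n+1}_{i}$, so if you run the contraction argument on $\mathbf{A}^{n:k}$ and conjugate afterwards you will land on a qualitatively equivalent but numerically different constant and should not expect to recover exactly the stated $c_0$. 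More importantly, the obstacle you single out---verifying that the limiting rank-one profile equals $\tfrac{1}{I}\boldsymbol{\phi}^{k}$---is dissolved in the paper by an algebraic identity rather than a limit argument: by \eqref{eq: JW}, $\tfrac{1}{I}\mathbf{1}(\boldsymbol{\phi}^{k})^{\top}=\tfrac{1}{I}\mathbf{1}(\boldsymbol{\phi}^{n+1})^{\top}\mathbf{W}^{n:k}$, hence $\mathbf{W}^{n:k}-\tfrac{1}{I}\mathbf{1}(\boldsymbol{\phi}^{k})^{\top}=\big(\mathbf{I}-\tfrac{1}{I}\mathbf{1}(\boldsymbol{\phi}^{n+1})^{\top}\big)\mathbf{W}^{n:k}$, and since $\mathbf{I}-\tfrac{1}{I}\mathbf{1}(\boldsymbol{\phi}^{n+1})^{\top}$ annihilates every matrix of the form $\mathbf{1}\mathbf{v}^{\top}$ (because $(\boldsymbol{\phi}^{n+1})^{\top}\mathbf{1}=I$), one may subtract $\mathbf{1}(\boldsymbol{\xi}^{k})^{\top}$ with $\boldsymbol{\xi}^{k}$ \emph{arbitrary} and apply the entrywise decay; no eigenvector matching is needed. (If you insist on your route, the identification does hold: letting $n\to\infty$ in the invariance $(\boldsymbol{\phi}^{n+1})^{\top}\mathbf{W}^{n:k}=(\boldsymbol{\phi}^{k})^{\top}$ forces $\boldsymbol{\xi}^{k}=\tfrac{1}{I}\boldsymbol{\phi}^{k}$.) Your triangle-inequality treatment of the uniform term works, since $\norm{\mathbf{W}^{n:k}}\le\sqrt{I}$ and $\norm{\tfrac{1}{I}\mathbf{1}(\boldsymbol{\phi}^{k})^{\top}}\le\sqrt{I}$ give $2\sqrt{I}\le\sqrt{2}\,I$ for $I\ge 2$; the paper instead factors the difference as $(\mathbf{I}-\mathbf{J}_{\boldsymbol{\phi}^{n+1}})\widehat{\mathbf{W}}^{n:k}$ and uses $\norm{\mathbf{I}-\mathbf{J}_{\boldsymbol{\phi}^{n+1}}}\le\sqrt{2I}$ together with $\norm{\widehat{\mathbf{W}}^{n:k}}\le\sqrt{I}$.
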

 \begin{proof}
 	See Appendix~\ref{app:proof_lemma_1}.\hfill $\square$
 \end{proof}
  	The error decay law \eqref{error-decay-perturbed-consensus} comes readily from \eqref{eq:upper-bound-error_proof}, Lemma~\ref{cor:averaging matrix column stochastic}, and  the following fact: $\big\|{\mathbf{I}}  -\mathbf{J}_{\boldsymbol{\phi}^{n}}\big\|\leq \sqrt{2\,I} \leq \lambda^0 \triangleq \min \{ 2 c_0 I, \sqrt{2} I\}$, which is proved below.    Let $\mathbf{z}\in \mathbb{R}^{I\cdot m}$ be an arbitrary vector;  let us  partition $\mathbf{z}$ as $\mathbf{z}=[\mathbf{z}_1^\top,\ldots, \mathbf{z}_I^\top]^\top$, with each  $\mathbf{z}_i\in \mathbb{R}^{m}$.
Then, 
\begin{align}\label{eq:I_minus_J}
\begin{split}
\norm{(\mathbf{I} - \mathbf{J}_{\boldsymbol{\phi}^{n}})\,\mathbf{z}}
 \leq & \,\norm{\mathbf{z} - \mathbf{J}_{\mathbf{1}}\,\mathbf{z}} + \norm{\mathbf{J}_{\mathbf{1}}\mathbf{z} - \mathbf{J}_{\boldsymbol{\phi}^{n}}\mathbf{z}}
 \stackrel{(a)}{\leq}  \norm{\mathbf{z}} + \frac{\sqrt{I}}{I}\norm{\sum_{i=1}^I \mathbf{z}_i - \sum_{i=1}^I \phi_i^{n} \mathbf{z}_i}\\
\leq & \norm{\mathbf{z}} + \frac{\sqrt{I}}{I} \, \sqrt{I^2 - I} \,\norm{\mathbf{z}} \leq \sqrt{2\,I} \,\norm{\mathbf{z}},
\end{split}
\end{align}
 where in (a) we used $\|\mathbf{I} - \mathbf{J}_{\mathbf{1}}\| = 1$.    \hfill $\square$\vspace{-0.2cm}

\section{Algorithmic Design} 
\label{sec:alg} \vspace{-0.2cm}

We are  ready to introduce the proposed distributed algorithm for Problem~\eqref{eq: P}. To shed light on the core idea of the novel framework, we begin introducing an informal and constructive description of the  algorithm (cf.~Sec.~\ref{SONATA-informal}), followed by its formal statement along with its convergence properties (cf.~Sec.~\ref{SONATA_formal}).\vspace{-0.4cm}

\subsection{SONATA at-a-glance}\label{SONATA-informal}\vspace{-0.2cm}  

Each agent $i$ maintains and updates iteratively a    {\it local copy} $\mathbf{x}_{(i)}$ of the global variable $\mathbf{x}$, along with an auxiliary variable $\mathbf{y}_{(i)}\in \mathbb{R}^m$;  let $\mathbf{x}_{(i)}^n$ and $\mathbf{y}_{(i)}^n$ denote the values of $\mathbf{x}_{(i)}$ and $\mathbf{y}_{(i)}$ at iteration $n$, respectively. Roughly speaking, 
the  update of these variables is designed so that    all the $\mathbf{x}_{(i)}^n$ will be asymptotically consensual, converging to a stationary solution of \eqref{eq: P}; and   each  $\mathbf{y}_{(i)}$   tracks locally the average of the gradients $(1/I)\cdot \sum_{i=1}^I \nabla f_i$, an information that  is not available at the agent's side. More specifically, the
following two steps are performed iteratively and in parallel
across the agents.

\smallskip

\noindent\textbf{Step 1: Local SCA.} The nonconvexity  of $f_i$ together with the lack of knowledge of  $\sum_{j\neq i}f_j$ in $F$,
 prevent agent $i$ to solve Problem~\eqref{eq: P} directly. To cope with
these issues, we leverage SCA techniques: at each iteration  $n$,  given the current iterate $\mathbf{x}_{(i)}^n$ and $\mathbf{y}_{(i)}^n$, 
   agent $i$ solves instead    a convexification of  \eqref{eq: P}, having the following form: \vspace{-0.2cm} \begin{equation}
 \widetilde{\mathbf{x}}_{(i)}^n\triangleq  \argmin_{\mathbf{x}_{(i)}\in \mathcal{K}} \,\widetilde{F}_i\left(\mathbf{x}_{(i)};\mathbf{x}_{(i)}^n,\mathbf{y}_{(i)}^n\right) +  G^+\left(\mathbf{x}_{(i)}\right), \label{eq: x_tilde}
 \end{equation}
and updates its $\mathbf{x}_{(i)}$ according to \vspace{-0.2cm}
 \begin{equation}\label{wi}
 \mathbf{x}_{(i)}^{n+1/2} =\mathbf{x}_{(i)}^n + \alpha^n\left(\widetilde{\mathbf{x}}_{(i)}^n- \mathbf{x}_{(i)}^n\right),\vspace{-0.2cm}
  \end{equation}
 where  $\alpha^{n}\in \left(0,1\right)$ is a step-size   (to be properly chosen). In \eqref{eq: x_tilde},  $\widetilde{F}_i(\bullet; \mathbf{x}_{(i)}^n,\mathbf{y}_{(i)}^n)$ is 
 chosen as:
\begin{equation}\label{surrogate_F}
	\begin{aligned}
\widetilde{F}_i\big(\mathbf{x}_{(i)};\mathbf{x}_{(i)}^n,\mathbf{y}_{(i)}^n\big) &\triangleq \widetilde{f_i}\big(\mathbf{x}_{(i)};\mathbf{x}_{(i)}^n\big) - \nabla G^{-}\big(\mathbf{x}_{(i)}^{n}\big)^{\top}\big(\mathbf{x}_{(i)} - \mathbf{x}_{(i)}^n\big)	 \smallskip \\&\quad + \big(I\cdot \mathbf{y}_{(i)}^{n}-\nabla f_i\big(\mathbf{x}_{(i)}^n\big)\big)^\top  \big(\mathbf{x}_{(i)} - \mathbf{x}_{(i)}^n\big),\end{aligned}
\end{equation}
where   $\widetilde{f}_i(\bullet;\mathbf{x}_{(i)}^n)$ is a strongly
convex approximation  of $f_i$ at the current iterate $\mathbf{x}_{(i)}^n$ (see Assumption \ref{assumption:f tilde} below); the second term is the linearization of the  smooth nonconvex function $-G^-$; and $\mathbf{y}_{(i)}^n$, as anticipated,      aims at  tracking  the gradient average $(1/I)\,\sum_{j=1}^I\nabla f_j(\mathbf{x}_{(i)}^n)$, that is,    $\lim_{n\to \infty}\|\mathbf{y}_{(i)}^n-$ $(1/I)\,\sum_{j=1}^I\nabla f_j(\mathbf{x}_{(i)}^n)\|=0$. This sheds light on the role of the last term in (\ref{surrogate_F}): under the claimed tracking properties of $\mathbf{y}_{(i)}^n$, there would hold: 
\begin{equation}\label{eq:asyntotic_track}
	\lim_{n\to \infty}\Big\|   
	\big(I\cdot \mathbf{y}_{(i)}^{n}-\nabla f_i\big(\mathbf{x}_{(i)}^n\big)\big)- \sum_{j\neq i}\nabla f_j(\mathbf{x}_{(i)}^n) 	\Big\|=0.
\end{equation}
Therefore, the last term in (\ref{surrogate_F}) can be seen as a  proxy of   the   gradient sum $\sum_{j\neq i}\nabla f_j(\mathbf{x}_{(i)}^n)$, which is not available at  agent $i$'s site.  
Building on the perturbed condensed push-sum protocol introduced in Sec.~\ref{sec:avg_consensus}  we will show   in Step 2 below how to update  $\mathbf{y}_{(i)}^n$ so that  (\ref{eq:asyntotic_track}) holds, using only  \emph{local} information.
\\ \indent The surrogate function $\widetilde{f}_i$ satisfies the following assumption.  \vspace{-0.1cm}
  
  \begin{assumption}[On surrogate function $\widetilde{f}_i$]\label{assumption:f tilde} Let $\widetilde{f}_i:{\mathcal K}\times {\mathcal K}\rightarrow \mathbb{R}$ be a $C^1$ function with respect to its first argument, and such that 
\begin{enumerate}[leftmargin=.8cm,label=(\theassumption\arabic*)]
	\item[D1.] $\nabla \widetilde{f}_i\left(\mathbf{x};\mathbf{x}\right)=\nabla f_i\left(\mathbf{x}\right)$, for all $\mathbf{x}\in \mathcal{K}$;
	\item[D2.] $\widetilde{f}_i\left(\bullet;\mathbf{y}\right)$ is uniformly strongly convex  on $\mathcal{K}$, with constant $\tau_i$;
	\item[D3.] $\nabla \widetilde{f}_{i}\left(\mathbf{x};\bullet\right)$ is uniformly Lipschitz continuous on $\mathcal{K}$, with constant $\tilde{L}_{i}$;
\end{enumerate}
where $\nabla \widetilde{f}_i(\mathbf{x};\mathbf{y})$ denotes the partial gradient of $\widetilde{f}_i$ with respect to the first argument, evaluated at $(\mathbf{x},\mathbf{y})$.\vspace{-0.2cm}
\end{assumption}

Conditions D1-D3 are quite natural: $\widetilde{f}_i$  should be regarded as
a (simple) convex, local, approximation of $f_i$ at $\mathbf{x}$ that
preserves the first order properties of $f_i$.  A gamut  of choices for $\widetilde{f}_i$ satisfying Assumption \ref{assumption:f tilde} are available; some representative examples are discussed in Sec.~\ref{sec:discussion}.

\vspace{.2cm}

\noindent\textbf{Step 2: Information mixing and gradient tracking.}  To complete the description of the algorithm,  we need   to introduce  
a mechanism to ensure that i) the local estimates  $\mathbf{x}_{(i)}^n$'s  asymptotically converge to a common value; and ii) each  $\mathbf{y}_{(i)}^n$  tracks  the gradient sum  $\sum_{j\neq i}\nabla f_j(\mathbf{x}_{(i)}^n)$. To this end, we leverage the perturbed condensed push-sum protocol introduced in Sec.~\ref{sec:avg_consensus}. Specifically, given $\mathbf{x}_{(j)}^{n+1/2}$'s, each $\mathbf{x}_{(i)}$ is updated according to [cf.~(\ref{eq:push_sum_consensus})]\vspace{-0.2cm}
\begin{equation}\label{eq:mixing x} 
  \phi_i^{n+1} =\sum_{j=1}^I a_{ij}^n\phi_j^n, \qquad 
   	\mathbf{x}_{(i)}^{n+1} = \sum_{j=1}^I a_{ij}^n \mathbf{x}_{(j)}^{n+1/2},\vspace{-0.2cm}
\end{equation}
where the $a_{ij}^n$ are chosen to satisfy Assumption~\ref{assumption:A}.  
Note that, the updates in \eqref{eq:mixing x} can be performed in a distributed way: each agent $j$ only needs to select the set of weights $\{a_{ij}^n\}_{i=1}^I$ and send $a_{ij}^n\phi_j^n$ and $a_{ij}^n\phi_j^n\mathbf{x}_{(j)}^{n+1/2}$ to its out-neighbors while  summing up the information received from its in-neighbors.

 To update the $\mathbf{y}_{(i)}^n$'s we leverage again the perturbed condensed push-sum scheme  (\ref{eq:push_sum_consensus}), with     with $\boldsymbol{\epsilon}_i^{n+1}=(1/\phi_i^{n+1})\,\big(\nabla f_i(\mathbf{x}_{(i)}^{n+1}) - \nabla f_i(\mathbf{x}_{(i)}^{n})\big)$ [cf.~(\ref{eq:tracking-perturbation})].   The resulting gradient tracking mechanism reads \vspace{-0.3cm}
\begin{equation}\label{eq: update y}
	\mathbf{y}_{(i)}^{n+1}= \dfrac{1}{\phi_i^{n+1}}  \sum_{j=1}^I\,a_{ij}^n\phi_j^n  \,\mathbf{y}_{(j)}^n +\dfrac{1}{\phi_i^{n+1}}  \left (\nabla f_i\big(\mathbf{x}_{(i)}^{n+1}\big) - \nabla f_i\big(\mathbf{x}_{(i)}^{n}\big)\right), 
\end{equation}
with   $\mathbf{y}_{(i)}^0 = \nabla f_i(\mathbf{x}_{(i)}^0)$. 
Note that the update of $\mathbf{y}_{(i)}^n$    can be   performed locally by agent $i$, with the same
signaling of that of  \eqref{eq:mixing x}. \vspace{-0.3cm}	
	
	\subsection{The SONATA algorithm}\label{SONATA_formal}\vspace{-0.4cm}
	 We can now formally introduce the proposed algorithm, SONATA, just combining   steps \eqref{eq: x_tilde},\eqref{wi},\eqref{eq:mixing x}, and  \eqref{eq: update y}--see   Algorithm 
	 \ref{alg:SONATA}.\vspace{-0.2cm} 
{\fontsize{10}{10}\selectfont
\begin{algorithm}[h]	
	\caption{SONATA}\label{alg:SONATA}
	\textbf{Data}: $\mathbf{x}^{0}_{(i)}\in \mathcal{K}$, for all $i$; $\boldsymbol{\phi}^{0}=\mathbf{1}$; $\mathbf{y}^{0}=\mathbf{g}^0$. Set $n=0$.
	\vspace{0.2cm}
	
	\texttt{[S.1]} If $\mathbf{x}^n$	satisfies termination criterion: STOP;\vspace{0.1cm}\\
	\texttt{[S.2] [Distributed Local Optimization]} Each agent $i$ \medskip\\ \phantom{\texttt{[S.2]}}\quad  Compute locally 	$\widetilde{\mathbf{x}}_{(i)}^n$ solving    problem~\eqref{eq: x_tilde}; \smallskip\\ 
	\phantom{\texttt{[S.2]}}\quad Update its local variable $\mathbf{x}_{(i)}^{n+1/2}   \triangleq \mathbf{x}_{(i)}^n + \alpha^n (\widetilde{\mathbf{x}}_{(i)}^n - \mathbf{x}_{(i)}^n) $;\vspace{0.1cm}

	 \texttt{[S.3] [Information Mixing]} Each agent $i$ compute \smallskip\\
	 \phantom{\texttt{[S.2]}} (a) \texttt{Consensus}  \vspace{-0.3cm}
	 \begin{align}  
 \phi_i^{n+1} & =\sum_{j=1}^I a_{ij}^n\phi_j^n \label{eq: update phi}\\
	\mathbf{x}_{(i)}^{n+1} &= \dfrac{1}{\phi_i^{n+1}}\sum_{j=1}^I  a_{ij}^n\, \phi_j^n \, \mathbf{x}_{(j)}^{n+1/2};
		\label{eq: update x}\end{align}  
	 \phantom{\texttt{[S.2]}} (b) \texttt{Gradient tracking}\vspace{-0.2cm}
	\begin{align}
	\mathbf{y}_{(i)}^{n+1}= \dfrac{1}{\phi_i^{n+1}} \sum_{j=1}^I\,a_{ij}^n\phi_j^n  \,\mathbf{y}_{(j)}^n + \dfrac{1}{\phi_i^{n+1}} \left(\nabla f_i\big(\mathbf{x}_{(i)}^{n+1}\big) - \nabla f_i\big(\mathbf{x}_{(i)}^{n}\big)\right);\label{eq: update y}
	 \end{align}
	 \texttt{[S.4] } $n\longleftarrow n+1$, go to \texttt{[S.1] }
\end{algorithm}\vspace{-0.3cm} 
}

Note that the algorithm is  distributed. Indeed, in Step~2, the optimization (\ref{eq: x_tilde}) is performed locally by each agent $i$, computing its own $\widetilde{\mathbf{x}}^{n}_{(i)}$. To do so,   agent $i$ needs to know the current ${\mathbf{x}}^{n}_{(i)}$ and $\mathbf{y}_{(i)}^{n}$, which are both available locally. There are then two consensus steps (Step~3) whereby agents transmit/receive information only to/from their out/in neighbors: one is on the optimization variables ${\mathbf{x}}^{n}_{(i)}$  (and the auxiliary scalars $\phi^n_i$)--see (\ref{eq: update phi})-(\ref{eq: update x})--and one is on the variables  ${\mathbf{y}}^{n}_{(i)}$--see (\ref{eq: update y}). \vspace{-0.3cm}

\subsection{Convergence and complexity analysis of SONATA}\vspace{-0.2cm} To prove  convergence, in addition to  Assumptions \ref{assumption:P}-\ref{assumption:f tilde}, one needs some conditions on the step-size $\alpha^n$. Since line-search methods are not practical in a distributed environment, there are two other options, namely: i) a fixed (sufficiently small) step-size; and ii) a diminishing step-size. We prove convergence using either choices. Recalling the definition of the network parameters $c_0$, $\bar{B}$, $\rhoopt$, $\LBphi$, and $\UBphi$ as given in Proposition~\ref{cor:averaging matrix column stochastic} [see also \eqref{definitions_kappa_c0_rho_0}] and  introducing the problem parameters  [cf.~Assumptions \ref{assumption:P}]\vspace{-0.2cm}
\begin{equation}\label{eq:deinitions_c_L}
\begin{aligned}
L \triangleq \sum_{i=1}^I L_i, \quad \tilde{L}_{\rm{mx}} \triangleq \max\limits_{1\leq i\leq I}\tilde{L}_{i}+L_G,\quad L_{\rm{mx}}\triangleq \max\limits_{1\leq i\leq I}L_{i},\\ c_{\tau}\triangleq \min\limits_{1\leq i \leq I}\tau_{i},\quad c_{L}\triangleq  \left(L\sqrt{I} + L_{\rm{mx}}+\tilde{L}_{\rm{mx}}\right)/I,
\end{aligned}
\end{equation}
the step-size can be chosen as follows.
\begin{assumption}\label{assumption:alpha}
	The step-size  $\left\{\alpha^n\right\}_{n\in \mathbb{N}_+}$ satisfies either one of the following conditions:
	\begin{enumerate}[leftmargin=.8cm,label=(\theassumption\arabic*)]
		\item[F1.] \emph{(diminishing):} $(0,1]\ni\alpha^n \downarrow 0$ and $ \sum_{n=0}^{\infty} \alpha^n = \infty$;\label{assump:E3}\smallskip 
		\item[F2.] \emph{(fixed):} \label{assump:E2} $\alpha^n \equiv \alpha$, for all  $n \in \mathbb{N}_+$, with
		\begin{multline}\hspace{-1.1cm}\label{constant_step}
		\alpha \leq\min\left\{\frac{\left(1- \rhoopt \right)\sigma}{\sqrt{2}\,c\,\bar{B}},\right.\\
		\hspace{-1.1cm}\left.\frac{2c_\tau\LBphi}{I\UBphi}\left(\frac{L+L_G}{I}+ \frac{2c_{L}\bar{B}c}{1-\rhoopt} \sqrt{\frac{2}{1-\sigma^2}} + \frac{12 L_{\rm{mx} } \LBphi^{-1}\bar{B}^2 c^2}{(1-\rhoopt)^2}  \sqrt{\frac{1}{1-\sigma^2}} \right)^{-1}\right\},
		\end{multline}
	\noindent where  $\sigma$ is an arbitrary constant $\sigma\in (0,1)$ and  $c=I\,\sqrt{2I}$. \end{enumerate} In addition, if all $\mathbf{A}^n$ are double stochastic, the upper bound in \eqref{constant_step} holds with  $c = 1$, $\bar{B}=B$, $\LBphi = \UBphi = 1$, and $\rhoopt = \big(1-\kappa/(2\,I^{2}) \big)^{1/2}$.   
\end{assumption}

We can now state the convergence results of the proposed algorithm, postponing all the  proofs to Sec.\,\ref{sec:convergence proof}.
Given $\{\mathbf{x}^n\triangleq (\mathbf{x}_{(i)}^n)_{i=1}^{I}\}_{n\in\mathbb{N}_+}$    generated by Algorithm~\ref{alg:SONATA}, convergence  is stated measuring the distance of the average sequence $\bar{\mathbf{x}}^n \triangleq (1/I)\cdot \sum_{i=1}^I \mathbf{x}_{(i)}^n$ from optimality and well as the consensus disagreement among the local variables $\mathbf{x}_{(i)}^n$'s. Distance from stationarity is measured by   the following function:\vspace{-0.2cm} 
\begin{equation}\label{eq:def_J}\begin{aligned}
& J(\bar{\mathbf{x}}^n)\triangleq \\ &\left\| \bar{\mathbf{x}}^n - \underset{\mathbf{z}\in \mathcal{K}}{\text{argmin}} \left\{\Big(\nabla F(\bar{\mathbf{x}}^n)-\nabla G^{-}\big(\bar{\mathbf{x}}^n\big)\Big)^{\top} (\mathbf{z}-\bar{\mathbf{x}}^n)+ \dfrac{1}{2}\|\mathbf{z}-\bar{\mathbf{x}}^n\|^2+G(\mathbf{z})^+\right\} \right\|.
 \end{aligned}\end{equation}
Note that $J$  is a valid measure of stationarity because it is continuous and $J(\bar{\mathbf{x}}^\infty)=0$ if and only if $\bar{\mathbf{x}}^\infty$ is a  d-stationary solution of Problem~\eqref{eq: P} \cite{facchinei2015parallel}.   
The consensus disagreement at iteration $n$ is defined as $$ D (\mathbf{x}^n) \triangleq \| \mathbf{x}^n - \mathbf{1}_I \otimes \bar{\mathbf{x}}^n\|.$$
Note that $D$   is equal to $0$ if and only if all the $\mathbf{x}_{(i)}^n$'s are consensual.
We combine the metrics $J$ and $D$ in a single merit function, defined as  
\begin{equation*}
M(\mathbf{x}^n) \triangleq \max\big\{ J (\bar{\mathbf{x}}^n) ^2,\, D (\mathbf{x}^n)^2 \big\}.
\end{equation*}

We are now ready to state the main convergence results  for Algorithm~\ref{alg:SONATA}. 

\vspace{-0.1cm}

	\begin{theorem}[asymptotic convergence]\label{thm:convergence}
	Given Problem~\eqref{eq: P} and Algorithm~\ref{alg:SONATA}, suppose that Assumptions \ref{assumption:P}-\ref{assumption:alpha} are satisfied; and  let  $\{\mathbf{x}^n\}_{n\in \mathbb{N}_+}$  be the sequence generated by the algorithm. Then,  there holds 
 $\lim_{n\to \infty} M(\mathbf{x}^n)=0$.
\end{theorem}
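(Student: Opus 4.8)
The plan is to establish a joint descent on a Lyapunov-like potential that couples the objective value along the $\boldsymbol{\phi}$-weighted average trajectory with cumulative consensus and tracking disagreements, and then to extract $\lim_n M(\mathbf{x}^n)=0$ from the summability this descent produces. First I would pass from the per-agent variables to the weighted average $\wavg{x}{n}$ and rewrite the update it inherits from Steps~2--3 of Algorithm~\ref{alg:SONATA} as a single centralized-looking SCA step perturbed by the consensus error $\var{x}{n}$ and the tracking error $\var{y}{n}$. Using the optimality of $\widetilde{\mathbf{x}}_{(i)}^n$ in \eqref{eq: x_tilde}, strong convexity D2 with modulus $c_\tau$, the gradient-consistency D1, and the Lipschitz bounds D3, A.2, A.3, I would derive a descent inequality of the schematic form
\[
U(\wavg{x}{n+1})\le U(\wavg{x}{n})-\big(c_\tau-\tfrac{1}{2}c_L\alpha^n\big)\alpha^n\norm{\Deltax^n}^2+\alpha^n\big(c_1\norm{\var{x}{n}}+c_2\norm{\var{y}{n}}\big)\norm{\Deltax^n},
\]
where $\Deltax^n$ stacks the best-response displacements $\widetilde{\mathbf{x}}_{(i)}^n-\mathbf{x}_{(i)}^n$ and $c_1,c_2,c_L$ depend on the Lipschitz constants in \eqref{eq:deinitions_c_L}. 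The cross terms appear precisely because each agent linearizes $F$ about its own $\mathbf{x}_{(i)}^n$ rather than about the common average; this is where the \emph{unbounded} gradients bite, since these terms cannot be dismissed as an a priori summable perturbation.

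Next I would control the two disagreements through Proposition~\ref{prop:error_decay}, applied separately to the $\mathbf{x}$-consensus recursion \eqref{eq: update x} and the gradient-tracking recursion \eqref{eq: update y}. For the former the injected perturbation is $\mathcal{O}(\alpha^n)\norm{\Deltax^n}$; for the latter it is the gradient increment $\nabla f_i(\mathbf{x}_{(i)}^{n+1})-\nabla f_i(\mathbf{x}_{(i)}^{n})$, which A.2 bounds by $L_{\rm mx}\norm{\mathbf{x}_{(i)}^{n+1}-\mathbf{x}_{(i)}^{n}}$, i.e. by a mix of $\alpha^n\norm{\Deltax^n}$ and the mixing of $\var{x}{n}$. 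Feeding \eqref{error-decay-perturbed-consensus} into these recursions yields, over a window of length $\bar B$, geometric-decay bounds governed by $\rhoopt<1$. The genuine obstacle is the two-way coupling now visible: the tracking error depends on the step displacements, hence on the consensus error, while the consensus error in turn drives the descent that is supposed to bound those displacements.

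To close this loop I would build the Lyapunov function advertised in Sec.~\ref{sec:contributions}, namely
\[
V^n\triangleq U(\wavg{x}{n})+\wexorth{n}+\weyorth{n},
\]
whose last two terms are exactly the windowed, geometrically weighted sums of the consensus and tracking errors; the weights $\tfrac{k+1+(\bar B-k-1)\tilde\rho}{1-\tilde\rho}$ are engineered so that shifting the window by one step produces a net negative contribution $-\norm{\var{x}{n}}^2-\norm{\var{y}{n}}^2$ up to fresh perturbations, exploiting $\rhoopt<1$. Combining this telescoping with the objective descent, and imposing the step-size bound \eqref{constant_step} (which is exactly what forces the positive $\mathcal{O}(\alpha)$ cross terms to be dominated by $c_\tau\alpha\norm{\Deltax^n}^2$ and by the error margins), I expect a one-step inequality $V^{n+1}\le V^n-\gamma\alpha^n(\norm{\Deltax^n}^2+\norm{\var{x}{n}}^2+\norm{\var{y}{n}}^2)$ for some $\gamma>0$ in the fixed-step case; in the diminishing case F1 the same holds for all $n$ large enough, since $\alpha^n\downarrow0$ eventually enforces the margin.

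Finally I would telescope. Since $U$ is lower bounded (A.4) and $V^n$ exceeds $U(\wavg{x}{n})$ by nonnegative terms, summation gives $\sum_n\alpha^n(\norm{\Deltax^n}^2+\norm{\var{x}{n}}^2+\norm{\var{y}{n}}^2)<\infty$. Under F2 this directly yields a sublinear rate and $\liminf_n(\cdots)=0$; under F1, with $\sum_n\alpha^n=\infty$, the same summability forces $\liminf_n(\cdots)=0$, and a standard argument using $\alpha^n\downarrow0$ and continuity of the best-response map upgrades the $\liminf$ to a $\lim$. It then remains to translate these limits into $M(\mathbf{x}^n)\to0$: the disagreement $D(\mathbf{x}^n)$ is comparable to $\norm{\var{x}{n}}$ up to the $\boldsymbol{\phi}$-bounds \eqref{consensus-error-bound-pert-cons} (so $D\to0$), while the stationarity gap $J(\bar{\mathbf{x}}^n)$ in \eqref{eq:def_J} is Lipschitz-controlled by $\norm{\Deltax^n}+\norm{\var{x}{n}}+\norm{\var{y}{n}}$ (comparing the proximal/best-response map at $\bar{\mathbf{x}}^n$ against its value at the local points and the tracked gradient), so $J\to0$; hence $\lim_n M(\mathbf{x}^n)=0$. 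The hard part throughout is the third paragraph: making the three mutually coupled quantities descend simultaneously without a bounded-gradient crutch, which is the technical novelty the paper highlights.
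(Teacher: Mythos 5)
Your proposal is correct and follows essentially the same route as the paper: descent of $U$ along the $\boldsymbol{\phi}$-weighted average with cross terms in $\var{x}{n}$ and $\var{y}{n}$, geometric control of both disagreements via Proposition~\ref{prop:error_decay}, a Lyapunov function combining $U(\wavg{x}{n})$ with the weighted windowed error sums (with exactly the weights $\frac{k+1+(\bar B-k-1)\tilde\rho}{1-\tilde\rho}$ the paper uses), telescoping under the step-size bound, the $\liminf$-to-$\lim$ upgrade via Lipschitz continuity of the best-response map, and the final translation into $D\to0$ and $J\to0$. The only cosmetic differences are that the paper's descent is over $\bar B$-step shifts ($V^{n+\bar B}\le V^n-\sum\beta\alpha\norm{\Deltax}^2$, with the negative term carried only by $\norm{\Deltax}^2$ and summability of the consensus/tracking errors then recovered from Proposition~\ref{prop:summable consensus err column stochastic}) rather than the one-step, three-term descent you sketch.
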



Under a constant step-size (Assumption \ref{assump:E2}.2), the next theorem provides an upper bound on the number of iterations needed to decrease $M({\mathbf{x}}^n)$ below a   given accuracy $\epsilon>0$.   

 
\begin{theorem}[complexity] \label{thm:convergence_complexity}
	Suppose that Assumptions \ref{assumption:P}-\ref{assumption:f tilde} are satisfied; and let  $\{\mathbf{x}^n\}_{n\in \mathbb{N}_+}$  be the sequence generated by Algorithm~\ref{alg:SONATA}, with a constant step-size $\alpha^n=\alpha$, satisfying Assumption~\ref{assump:E2}.2. Given $\epsilon>0$,  let $T_\epsilon$  be the first iteration $n$ such that $M({\mathbf{x}}^n)\leq \epsilon$. Then  
	 $T_\epsilon=\mathcal{O}({1}/{\epsilon})$.
\end{theorem}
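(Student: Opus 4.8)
The plan is to upgrade the asymptotic statement of Theorem~\ref{thm:convergence} to a quantitative best-iterate rate by routing everything through one telescoping descent inequality. The engine is the Lyapunov-like potential already used for Theorem~\ref{thm:convergence}: a function $\mathcal L^n$ that couples the objective along the $\boldsymbol\phi$-weighted average trajectory with the windowed consensus and gradient-tracking disagreements, schematically
\[
\mathcal L^n \triangleq U\big(\bar{\mathbf x}^n\big) + \corth\,\wexorth{n} + \cDeltax\,\weyorth{n}.
\]
The technical core is to show that, for every constant $\alpha$ obeying the fixed step-size condition F2 of Assumption~\ref{assumption:alpha}, there is a constant $c_1>0$, independent of $n$, with
\[
\mathcal L^{n+1}\le \mathcal L^n-c_1\,\alpha\Big(\norm{\Deltaxi{n}}^2+\norm{\var{x}{n}}^2\Big).
\]
Since $U$ is lower bounded on $\mathcal K$ (Assumption~A.4) and the windowed terms are nonnegative, $\mathcal L^n\ge \mathcal L^\star>-\infty$ for all $n$.

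Summing this inequality over $n=0,\dots,N-1$ telescopes $\mathcal L$ and yields a bound uniform in $N$,
\[
c_1\,\alpha\sum_{n=0}^{N-1}\Big(\norm{\Deltaxi{n}}^2+\norm{\var{x}{n}}^2\Big)\le \mathcal L^0-\mathcal L^\star<\infty.
\]
It then remains to dominate the summed merit function $\sum_n M(\mathbf x^n)$, where $M(\mathbf x^n)=\max\{J(\bar{\mathbf x}^n)^2,D(\mathbf x^n)^2\}$, by this left-hand side. To this end I bound $\sum_n M\le \sum_n(J^2+D^2)$ and control each piece. The consensus part is pointwise: $D(\mathbf x^n)=\norm{\mathbf x^n-\mathbf 1\otimes\bar{\mathbf x}^n}$ differs from $\norm{\var{x}{n}}$ only through the gap between the plain and the $\boldsymbol\phi$-weighted averages, which the uniform bounds $\LBphi\le\phi_i^n\le\UBphi$ of Proposition~\ref{prop:error_decay}(i) render $O(\norm{\var{x}{n}})$. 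For the stationarity part, the optimality of $\widetilde{\mathbf x}_{(i)}^n$ in~\eqref{eq: x_tilde}, combined with D1--D3 and the Lipschitz continuity of $\nabla f_i,\nabla G^-$ (A.2--A.3), gives $J(\bar{\mathbf x}^n)^2\lesssim \norm{\Deltaxi{n}}^2+\norm{\var{x}{n}}^2+\norm{\var{y}{n}}^2$, the last term measuring how far $I\mathbf y_{(i)}^n-\nabla f_i(\mathbf x_{(i)}^n)$ is from the true $\sum_{j\neq i}\nabla f_j$.

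Finally, the tracking error is not summable on its own, but its squared sum is controlled by the descent quantities: the perturbation driving~\eqref{eq: update y} equals the gradient increment $\nabla f_i(\mathbf x_{(i)}^{n+1})-\nabla f_i(\mathbf x_{(i)}^n)$, which by A.2 is $O(\norm{\mathbf x_{(i)}^{n+1}-\mathbf x_{(i)}^n})=O(\norm{\var{x}{n}}+\alpha\norm{\Deltaxi{n}})$, so Proposition~\ref{prop:error_decay}(ii) (with contraction $\rhoopt<1$) delivers $\sum_n\norm{\var{y}{n}}^2\lesssim \sum_n(\norm{\Deltaxi{n}}^2+\norm{\var{x}{n}}^2)$. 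Collecting these, $\sum_{n=0}^{N-1}M(\mathbf x^n)\le C\,(\mathcal L^0-\mathcal L^\star)/(c_1\alpha)$ for a constant $C$ independent of $N$, whence
\[
\min_{0\le n<N}M(\mathbf x^n)\le \frac{C\,(\mathcal L^0-\mathcal L^\star)}{c_1\,\alpha\,N}.
\]
The right-hand side drops below $\epsilon$ once $N\ge C\,(\mathcal L^0-\mathcal L^\star)/(c_1\alpha\epsilon)$; as $T_\epsilon$ is the first index with $M\le\epsilon$ and the minimum over $0\le n<N$ is attained, $T_\epsilon\le N=\mathcal O(1/\epsilon)$, the hidden constant depending on $\alpha$ and on $c_0,\bar B,\rhoopt,\LBphi,\UBphi$ and the constants in~\eqref{eq:deinitions_c_L}. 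I expect the decisive difficulty to be the descent inequality, not the telescoping bookkeeping: because the gradients are unbounded, the tracking perturbation cannot be treated as a vanishing outer disturbance, so objective decrease and consensus/tracking errors must be handled jointly. The crux is to tune $\corth,\cDeltax$ and the window length $\bar B$ so that the strict contraction $\rhoopt<1$ makes $\mathcal L^{n+1}-\mathcal L^n$ carry a net negative coefficient on $\norm{\var{x}{n}}^2$ and $\norm{\var{y}{n}}^2$ that overwhelms the cross terms from the linearizations and from the plain-versus-$\boldsymbol\phi$ average mismatch --- which is precisely what pins down the explicit ceiling on $\alpha$ in~\eqref{constant_step}.
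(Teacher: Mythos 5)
Your proposal is correct and follows essentially the route the paper's own machinery dictates: the paper omits the proof of Theorem~\ref{thm:convergence_complexity} ``because of space limitation,'' but the Lyapunov function $V^n$ of \eqref{Lyapunov_function}, its $\bar B$-step descent \eqref{eq:V column stochastic} with $\beta>0$ under \eqref{constant_step}, the summability transfer of Proposition~\ref{prop:summable consensus err column stochastic}, and the bounds $D(\mathbf x^n)\lesssim\|\var{x}{n}\|$ and $J(\bar{\mathbf x}^n)\lesssim\|\Deltax^{n}\|+\|\var{x}{n}\|+\|\var{y}{n}\|$ from Step~2 of the proof of Theorem~\ref{thm:convergence} are exactly the ingredients you assemble, and telescoping them yields $\sum_{n<N}M(\mathbf x^n)=\mathcal O(1)$ and hence $T_\epsilon=\mathcal O(1/\epsilon)$. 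The only cosmetic discrepancy is that you state a per-iteration descent of the windowed potential, whereas the paper establishes descent only over $\bar B$-iteration blocks; this changes nothing beyond constants in the telescoping.
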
	
 

\begin{remk}[generalizations]\label{remark_step-size}  Theorems~\ref{thm:convergence} and \ref{thm:convergence_complexity} can be established with minor modifications under the setting wherein each agent $i$ uses different constant step-size $\alpha_i$. Also the assumption on the strongly convexity of the surrogate function $\widetilde{f}_i$ (Assumption~\ref{assumption:f tilde}.2) can be weakened to just convexity, if the feasible set $\mathcal{K}$ is compact. With mild additional assumptions on $G^+$--see \cite{LorenzoScutari-J'16}--we can extend convergence results in Theorem~\ref{thm:convergence} to the case wherein agents solve their subproblems (\ref{eq: x_tilde}) inexactly.  We omit further details because of space limitation.
\end{remk}\vspace{-0.6cm}
 \subsection{Discussion}\label{sec:discussion} \vspace{-0.2cm}

Theorem~\ref{thm:convergence} (resp. Theorem~\ref{thm:convergence_complexity}) provides the first  convergence (resp. complexity) result of distributed algorithms for \emph{constrained} and/or \emph{composite} optimization problems over time-varying (undirected or directed) graphs, which   significantly enlarges the class of convex and nonconvex problems  which distributed algorithms can be applied to with convergence guarantees. 
	\smallskip
 	

SONATA represents a
gamut of algorithms, each of them corresponding to a specific choice of the surrogate function $\widetilde{f}_i$, step-size $\alpha^n$, and  matrices $\mathbf{A}^n$. Convergence is guaranteed under several choices of the free parameters of the algorithms, some of which are   briefly discussed next. 

\noindent$\bullet$ \textbf{On the choice of $\widetilde{f}_i$.}
Examples of $\widetilde{f}_i$ satisfying Assumption~\ref{assumption:f tilde} are \vspace{-0.1cm} 
\begin{itemize}
	\item[$-$]\textit{Linearization}: Linearize $f_{i}$ and add a proximal regularization (to make $\tilde{f}_i$ strongly convex), which leads to  \vspace{-0.1cm} 
$$\widetilde{f}_{i}\big(\mathbf{x}_{(i)};\mathbf{x}_{(i)}^n\big) = f_{i}\big(\mathbf{x}_{(i)}^n\big)+  \nabla f_{i}\big(\mathbf{x}_{(i)}^n\big)^{\top}\big(\mathbf{x}_{(i)}-\mathbf{x}_{(i)}^n\big) +\frac{\tau_{i}}{2}\big\|\mathbf{x}_{(i)}-\mathbf{x}_{(i)}^n\big\|^{2}_2;$$

 \item[$-$]\textit{Partial Linearization}: Consider the case where $f_{i}$ can be decomposed as $f_{i}(\mathbf{x}_{(i)})=f_{i}^{(1)}(\mathbf{x}_{(i)})+f_{i}^{(2)}(\mathbf{x}_{(i)})$, where $f_{i}^{(1)}$ is convex and $f_{i}^{(2)}$ is  nonconvex with Lipschitz continuous gradient.  Preserving the convex part of $f_i$ while linearizing $f_{i}^{(2)}$  leads to the following valid surrogate  \vspace{-0.1cm} 
\begin{align*}
	\widetilde{f}_{i}(\mathbf{x}_{(i)};\mathbf{x}_{(i)}^n) = & \,\,f_{i}^{(1)}\!(\mathbf{x}_{(i)}) + f_{i}^{(2)}(\mathbf{x}_{(i)}^n)+ \frac{\tau_{i}}{2}\,\|\mathbf{x}_{i}-\mathbf{x}_{(i)}^n\|^{2}\smallskip\\ 
&+ \nabla f_{i}^{(2)}(\mathbf{x}_{(i)}^n)^{\top}(\mathbf{x}_{i}-\mathbf{x}_{(i)}^n);\vspace{-0.1cm} 
\end{align*}
 
\item[$-$]\textit{Partial Convexification:} Consider the case where  $\mathbf{x}_{(i)}$ is partitioned as $(\mathbf{x}_{(i,1)},\mathbf{x}_{(i,2)})$, and $f_{i}$ is convex in $\mathbf{x}_{(i,1)}$ but not in $\mathbf{x}_{(i,2)}$.  Then, one can 
convexify only  the nonconvex part of $f_{i}$, which leads to the  surrogate: 
\begin{align*}
\widetilde{f}_{i}(\mathbf{x}_{(i)};\mathbf{x}_{(i)}^n)=& \,f_{i}(\mathbf{x}_{(i,1)},\mathbf{x}_{(i,2)}^{n})+\frac{\tau_{i}}{2}\,\|\mathbf{x}_{(i,2)}-{\mathbf{x}_{(i,2)}^{n}}\|^{2}\smallskip\\
&+\nabla^{(2)}f_{i}(\mathbf{x}_{(i)}^n)^{\top}(\mathbf{x}_{(i,2)}-\mathbf{x}_{(i,2)}^n),
\end{align*}
where $\nabla^{(2)}f_{i}$ denotes the gradient of $f_{i}$ with respect to $\mathbf{x}_{(i,2)}$.
Other choices of surrogates can be  obtained hinging on  \cite{facchinei2015parallel,scutari_PartI,scutari_PartIII}.\end{itemize}

\noindent$\bullet$ \textbf{On the choice of the step-size.}  Several options are possible  for the  step-size sequence $\{\alpha^n\}_{n}$ satisfying the  diminishing-rule in Assumption \ref{assumption:alpha}.1; see, e.g., \cite{bertsekas1999nonlinear}. Two instances  we found to be effective in our experiments are:  i) $\alpha^n = \alpha_{0}/\left(n+1\right)^{\beta},$ with $\alpha_0>0$ and $0.5<\beta\leq 1$; and ii) $\alpha^n = \alpha^{n-1}\left(1-\mu\alpha^{n-1}\right),$ with $\alpha^0\in(0,1]$, and  $\mu\in\left(0,1\right)$. 
\smallskip

\noindent$\bullet$ \textbf{On the choice of matrix $\mathbf{A}^n$.}  When dealing with digraphs, the key requirement of Assumption~\ref{assumption:G} is that each $\mathbf{A}^n$ is column stochastic. Such matrices can be built locally  by the agents:  each agent $j$ can simply choose weight $a_{ij}^n$ for $i\in \mathcal{N}_j^{\rm out}[n]$ so that $\sum_{i\in \mathcal{N}_j^{\rm out}[n]} a_{ij}^n = 1$. As a special case, $\mathbf{A}^n $ can be set to be the following push-sum matrix \cite{kempe2003gossip}: $a_{ij}^n =1/d_j^n$, if $\left(j,i\right)\in \mathcal{E}^n$; and $a_{ij}^n =0$, otherwise; 
where $d_i^n $ is the out-degree of agent $i$. In this case, the information mixing process in Step 2 
becomes a broadcasting protocol, which requires from each agent only the knowledge of its out-degree.

When the digraphs  $\mathcal{G}^n$ admit a double-stochastic matrix (e.g., they are  \emph{undirected}),  as already observed in Sec.~\ref{sec:avg_consensus} (cf.~Remark~\ref{remark_consensus}),  one can choose $\mathbf{A}^n$ as  double-stochastic; and the consensus and tracking protocols in Step 3 reduce respectively to  \vspace{-0.3cm}
	 \begin{align}\label{SONATA-NEXT}
	 \begin{split}
\mathbf{x}_{(i)}^{n+1} &= \sum_{j=1}^I  a_{ij}^n   \big(\mathbf{x}_{(j)}^n + \alpha^n (\widetilde{\mathbf{x}}_{(j)}^n - \mathbf{x}_{(j)}^n) \big)\\
\mathbf{y}_{(i)}^{n+1} & = \sum_{j=1}^I  a_{ij}^n \mathbf{y}_{(j)}^n + \nabla f_i (\mathbf{x}_{(i)}^{n+1}) - \nabla f_i (\mathbf{x}_{(i)}^{n}).	 
	 \end{split}
	 \end{align}
Several choices have been proposed in the literature to build in a distributed way a  double stochastic matrix   $\mathbf{A}^n$, including:  the Laplacian,   Metropolis-Hastings, and maximum-degree weights; see, e.g.,  \cite{xiao2005scheme}.

\noindent$\bullet$ \textbf{ATC/CAA updates.} In the case of unconstrained optimization,   the information mixing step in Algorithm~\ref{alg:SONATA} can be performed following two alternative protocols, namely: i) the  \emph{Adapt-Then-Combine-based} (ATC) scheme; and ii) the   \emph{Combine-And-Adapt-based} (CAA) approach (termed ``consensus strategy'' in \cite{sayed2014adaptation}). The former is the one used in \eqref{eq:mixing x}--each agent $i$ first updates its local copy $\mathbf{x}_{(i)}^n$ along the direction $\widetilde{\mathbf{x}}_{(i)}^n - \mathbf{x}_{(i)}^n$, and then combines its new update with that of its neighbors  via consensus. Alternatively, in the CAA update,   agent $i$ first mixes its own local copy $\mathbf{x}_{(i)}^n$ with that of its neighbors via consensus, and  then performs its local optimization-based update using $\widetilde{\mathbf{x}}_{(i)}^n - \mathbf{x}_{(i)}^n$, that is \vspace{-0.2cm}$$\mathbf{x}_{(i)}^{n+1}  = \frac{1}{\phi_i^{n+1}} \sum_{j = 1}^I a_{ij} \phi_j^n \mathbf{x}_{(j)}^n + \frac{\phi_{i}^n}{\phi_{i}^{n+1}} \cdot \alpha^n (\widetilde{\mathbf{x}}_{(i)}^n - \mathbf{x}_{(i)}^n).\vspace{-0.2cm}$$
It is not difficult to check that  SONATA based on CAA updates   converges under the same conditions as in Theorem \ref{thm:convergence}. 
\vspace{-0.4cm}

\section{SONATA and special cases}\label{sec:connection}\vspace{-0.2cm}

In this section, we contrast SONATA with related algorithms proposed in the literature \cite{Lorenzo2015NEXT-CAMSAP15,Lorenzo2015NEXT-ICASSP16,LorenzoScutari-J'16,Xu2015augmented}  and very recent proposals \cite{nedich2016achieving,XiKha-J'16,qu2016harnessing} for \emph{special}   instances of Problem~\eqref{eq: P}. We show   that algorithms in \cite{Xu2015augmented,nedich2016achieving,XiKha-J'16,qu2016harnessing}   are all special cases of  SONATA  and NEXT,   proposed in our earlier  works \cite{Lorenzo2015NEXT-CAMSAP15,Lorenzo2015NEXT-ICASSP16,LorenzoScutari-J'16,SunScutariPalomar-C'16}. 

We preliminarily   rewrite Algorithm~\ref{alg:SONATA} in a matrix-vector form.  { Similarly to $\mathbf{x}^n$, define the concatenated vectors \begin{align}\label{eq:definition_symbols_2}
	\widetilde{\mathbf{x}}^n &\triangleq [\widetilde{\mathbf{x}}_{(1)}^{n\top},\ldots,\widetilde{\mathbf{x}}_{(I)}^{n\top}]^\top,\\
	\mathbf{y}^n  & \triangleq  [\mathbf{y}_{(1)}^{n\top},\ldots,\mathbf{y}_{(I)}^{n\top}]^\top,\\
	\mathbf{g}^n & \triangleq  [\mathbf{g}_{1}^{n\top},\ldots,\mathbf{g}_{I}^{n\top}]^\top,\quad \mathbf{g}_{i}^n  \triangleq   \nabla f_{i}(\mathbf{x}_{(i)}^n),\\
	\Delta \mathbf{x}^n & \triangleq \widetilde{\mathbf{x}}^n-{\mathbf{x}}^n,
\end{align} 
where  $\widetilde{\mathbf{x}}_{(i)}^{n}$ and  $\mathbf{y}^n_{(i)}$   are defined in (\ref{eq: x_tilde}) and  \eqref{eq: update y}, respectively.   
Using the above  notation and the matrices introduced in~\eqref{def_matrices},  SONATA [cf.~\eqref{eq: update phi}-\eqref{eq: update y}] can be written in compact form   as 
\begin{align}
\boldsymbol{\phi}^{n+1} & = \mathbf{A}^n \boldsymbol{\phi}^{n} \label{eq: update vec phi}\\
\mathbf{x}^{n+1} & = \widehat{\mathbf{W}}^n (\mathbf{x}^n + \alpha^n \Delta \mathbf{x}^n) \label{eq: update vec x}\\
	 \mathbf{y}^{n+1}   &=\widehat{\mathbf{W}}^n\mathbf{y}^n+ (\DiagPhi{n+1})^{-1}\left(\mathbf{g}^{n+1}-\mathbf{g}^{n}\right).\label{eq: update vec y}
\end{align}
}
 \vspace{-0.8cm}

  \subsection{Preliminaries: SONATA-NEXT and SONATA-L}\vspace{-0.2cm}
Since   \cite{Xu2015augmented,qu2016harnessing,nedich2016achieving,XiKha-J'16} are applicable  only to \emph{unconstrained} ($\mathcal{K}=\mathbb{R}^{m}$), \emph{smooth} ($G=0$) and \emph{convex} (each $f_i$ is convex) multiagent problems, in the following,  we  consider only such an instance of Problem~\eqref{eq: P}. 
Choose each $\widetilde{f}_{i}$ as first order approximation of $f_i$ plus a proximal term, that is,\vspace{-0.2cm}
$$
\widetilde{f}_{i}(\mathbf{x}_{(i)};\mathbf{x}_{(i)}^n) =  f_{i}(\mathbf{x}_{(i)}^n) + \nabla f_{i}(\mathbf{x}_{(i)}^n)^{\top}(\mathbf{x}_{(i)} - \mathbf{x}_{(i)}^n) + \frac{\tau_{i}}{2}\,\|\mathbf{x}_{(i)} - \mathbf{x}_{(i)}^n\|^{2},
$$
and set $\tau_i=I$. Then, $\widetilde{\mathbf{x}}_{(i)}^n$  can be computed in closed form   [cf.\,\eqref{eq: x_tilde}]:\vspace{-0.2cm} 
\begin{equation}\label{x_tilde_linearized}
\begin{aligned}
 \widetilde{\mathbf{x}}_{(i)}^n 
&  = \argmin_{\mathbf{x}_{(i)}}\, (I\cdot\mathbf{y}_{(i)}^n)^{\top}(\mathbf{x}_{(i)} - \mathbf{x}_{(i)}^n) + \frac{I}{2}\,\| \mathbf{x}_{(i)} - \mathbf{x}_{(i)}^n\|^{2}\\
& = \argmin_{\mathbf{x}_{(i)}}\, \frac{I}{2} \big\| \mathbf{x}_{(i)} - \mathbf{x}_{(i)}^n +  \mathbf{y}_{(i)}^n \big\|^{2} = \mathbf{x}_{(i)}^n - \mathbf{y}_{(i)}^n.
\end{aligned} 
\end{equation}
Therefore, $\Delta \mathbf{x}_{(i)}^n=\widetilde{\mathbf{x}}_{(i)}^n-{\mathbf{x}}_{(i)}^n=\mathbf{y}_{(i)}^n$.

Substituting \eqref{x_tilde_linearized} into \eqref{eq: update vec x} and using either ATC or CAA mixing protocols, Algorithm~\ref{alg:SONATA} reduces to  \vspace{-0.1cm}  
\begin{align}\label{eq:SONATA-L}
	\boldsymbol{\phi}^{n+1} &= \mathbf{A}^n\,\boldsymbol{\phi}^n \nonumber\\
	\mathbf{x}^{n+1} & = \begin{cases}
		\widehat{\mathbf{W}}^n\left(\mathbf{x}^n - \alpha^n\,\mathbf{y}^n\right) & \text{ (ATC-based update)}\\
		\widehat{\mathbf{W}}^n\mathbf{x}^n - \alpha^n\,\left(\DiagPhi{n+1}\right)^{-1}\DiagPhi{n}\mathbf{y}^n &\text{ (CAA-based update)}
	\end{cases}\\
	\mathbf{y}^{n+1}   &=\widehat{\mathbf{W}}^n\mathbf{y}^n+\left(\widehat{\mathbf{D}}_{\boldsymbol{\phi}^{n+1}}\right)^{-1}\left(\mathbf{g}^{n+1}-\mathbf{g}^{n}\right);\nonumber\hspace{-0.3cm}
\end{align}
which we will  refer to as \emph{(ATC/CAA-)SONATA-L} (L stands for ``linearized'').

 When the digraph $\mathcal{G}^n$ admits a \emph{double-stochastic}  matrix $\mathbf{A}^n$, and $\mathbf{A}^n$ in \eqref{eq: update vec phi} is chosen so, the iterates \eqref{eq:SONATA-L}  can be further simplified as  reduces to\hspace{-0.3cm}
	\begin{align}\label{eq:NEXT matrix form}
				\mathbf{x}^{n+1} &=\begin{cases}
			\widehat{\mathbf{W}}^n\left(\mathbf{x}^n-\alpha^n\mathbf{y}^n\right) & \text{ (ATC-based update)}\\
			\widehat{\mathbf{W}}^n\mathbf{x}^n-\alpha^n\mathbf{y}^n & \text{ (CAA-based update)} 
		\end{cases}\\
		\mathbf{y}^{n+1} & =\widehat{\mathbf{W}}^n\mathbf{y}^n+\mathbf{g}^{n+1}-\mathbf{g}^n,\nonumber\hspace{-0.3cm}
	\end{align}
where $\mathbf{W}^n=\mathbf{A}^n$ and thus $\widehat{\mathbf{W}}^n= \mathbf{W}^n \otimes \mathbf{I}_m$. The ATC-based updates    coincide  with our previous algorithm NEXT [based on the surrogate (\ref{x_tilde_linearized})], introduced in \cite{Lorenzo2015NEXT-CAMSAP15,Lorenzo2015NEXT-ICASSP16,LorenzoScutari-J'16}. We will refer to \eqref{eq:NEXT matrix form} as  (\emph{ATC/CAA-})\emph{NEXT-L}.\vspace{-0.3cm}

\subsection{Connection with current algorithms}\vspace{-0.2cm}
We can  now show that  the algorithms recently  studied in \cite{Xu2015augmented, qu2016harnessing,nedich2016achieving,XiKha-J'16} are all special cases of SONATA and NEXT, earlier proposed in \cite{Lorenzo2015NEXT-CAMSAP15,Lorenzo2015NEXT-ICASSP16,LorenzoScutari-J'16}. 
\vspace{1ex}\\
\textbf{Aug-DGM  \cite{Xu2015augmented} and Algorithm in \cite{qu2016harnessing}.}  Introduced in \cite{Xu2015augmented} for \emph{undirected, time-invariant} graphs, the  Aug-DGM algorithm  reads
\begin{equation}\label{eq:augDGM uncoordinated step-size}
\begin{aligned}
\mathbf{x}^{n+1} & = \widehat{\mathbf{W}}\left(\mathbf{x}^n - \textrm{Diag}\left(\boldsymbol{\alpha}\otimes \mathbf{1}_m\right) \mathbf{y}^n\right)\\
\mathbf{y}^{n+1} & = \widehat{\mathbf{W}}\left(\mathbf{y}^n + \mathbf{g}^{n+1}- \mathbf{g}^n\right)
\end{aligned}
\end{equation}
where  $\widehat{\mathbf{W}} \triangleq \mathbf{W}\otimes \mathbf{I}_m$;  $\mathbf{W}$ is a double stochastic matrix  satisfying Assumption~\ref{assumption:A}, and   $\boldsymbol{\alpha}$ is the vector of agents' step-sizes $\alpha_i$'s. \\\indent A similar algorithm was proposed independently   in \cite{qu2016harnessing} (in the same networking setting of \cite{Xu2015augmented}), which reads  \vspace{-0.2cm} 
\begin{equation}\label{eq:augDGM coordinated step-size}
\begin{aligned}
\mathbf{x}^{n+1} & = \widehat{\mathbf{W}}\left(\mathbf{x}^n- \alpha\mathbf{y}^n\right)\\
\mathbf{y}^{n+1} & = \widehat{\mathbf{W}}\mathbf{y}^n + \mathbf{g}^{n+1}- \mathbf{g}^n.
\end{aligned}
\end{equation}


Clearly  Aug-DGM  \cite{Xu2015augmented} in  \eqref{eq:augDGM uncoordinated step-size} with the $\alpha_i$'s equal, and Algorithm \cite{qu2016harnessing} in \eqref{eq:augDGM coordinated step-size}  coincide with (ATC-)NEXT-L [cf.~\eqref{eq:NEXT matrix form}].
\vspace{1ex}\\
\noindent\textbf{(Push-)DIGing \cite{nedich2016achieving}.} Appeared in \cite{nedich2016achieving} and applicable to  \emph{$B$-strongly connected undirected graphs}, the DIGing Algorithm   reads\vspace{-0.2cm} 
\begin{equation}\label{eq:DIGing}
\begin{aligned}
\mathbf{x}^{n+1} & = \widehat{\mathbf{W}}^n\mathbf{x}^n - \alpha \mathbf{y}^n\\
\mathbf{y}^{n+1} & = \widehat{\mathbf{W}}^n\mathbf{y}^n + \mathbf{g}^{n+1}- \mathbf{g}^n,
\end{aligned}
\end{equation}
where $\mathbf{W}^n$ is a double-stochastic matrix satisfying Assumption~\ref{assumption:A}. Clearly, DIGing coincides with (CAA-)NEXT-L \cite{Lorenzo2015NEXT-CAMSAP15,Lorenzo2015NEXT-ICASSP16,LorenzoScutari-J'16}[cf.~\eqref{eq:NEXT matrix form}]. The  push-DIGing algorithm, studied in the same paper  \cite{nedich2016achieving}, extends DIGing to  $B$-strongly connected digraphs. It turns out that  push-DIGing  coincides with (ATC-)SONATA-L [cf. Eq. \eqref{eq:SONATA-L}] when $a_{ij}^n = 1/d_j^n$. 
\vspace{1ex}\\
\noindent \textbf{ADD-OPT \cite{XiKha-J'16}.}  Finally, we mention the  ADD-OPT algorithm,  proposed in \cite{XiKha-J'16} for \emph{strongly connected static digraphs}, which takes the following form:\vspace{-0.3cm}
\begin{equation}\label{eq:ADD-OPT}
\begin{aligned}
\mathbf{z}^{n+1} & = \widehat{\mathbf{A}}\mathbf{z}^n - \alpha \widetilde{\mathbf{y}}^n\\
\boldsymbol{\phi}^{n+1}& = \mathbf{A}\,\boldsymbol{\phi}^n\\
\mathbf{x}^{n+1} & = \left(\widehat{\mathbf{D}}_{\boldsymbol{\phi}^{n+1}}\right)^{-1}\mathbf{z}^{n+1}\\
\widetilde{\mathbf{y}}^{n+1} & = \widehat{\mathbf{A}}\,\widetilde{\mathbf{y}}^n + \mathbf{g}^{n+1} - \mathbf{g}^n,
\end{aligned}
\end{equation}
where $\mathbf{A}$ is a column stochastic matrix satisfying Assumption~\ref{assumption:A}, and  $\widehat{\mathbf{A}}=\mathbf{A}\otimes \mathbf{I}_m$. Defining $\mathbf{y}^n = (\widehat{\mathbf{D}}_{\boldsymbol{\phi}^{n+1}})^{-1}\widetilde{\mathbf{y}}^n$, it is not difficult to check  that \eqref{eq:ADD-OPT} can be rewritten as\vspace{-0.3cm}
\begin{equation}
\begin{aligned}
\boldsymbol{\phi}^{n+1} &= \mathbf{A}\boldsymbol{\phi}^n,\quad 
\mathbf{W}=\left(\widehat{\mathbf{D}}_{\boldsymbol{\phi}^{n+1}}\right)^{-1}\widehat{\mathbf{A}}\, \widehat{\mathbf{D}}_{\boldsymbol{\phi}^{n}}\\
\mathbf{x}^{n+1} & = \widehat{\mathbf{W}} \mathbf{x}^n - \alpha \left(\widehat{\mathbf{D}}_{\boldsymbol{\phi}^{n+1}}\right)^{-1}\widehat{\mathbf{D}}_{\boldsymbol{\phi}^{n}}\, \mathbf{y}^n\\
\mathbf{y}^{n+1} & = \widehat{\mathbf{W}}\mathbf{y}^n + \left(\widehat{\mathbf{D}}_{\boldsymbol{\phi}^{n+1}}\right)^{-1}\,\left(\mathbf{g}^{n+1} - \mathbf{g}^n\right).
\end{aligned}\label{eq:ADD-OPT2}
\end{equation}
Comparing Eq. \eqref{eq:SONATA-L} and \eqref{eq:ADD-OPT2}, one can see that 
ADD-OPT coincides with (CAA-)SONATA-L.  

We summarize the connections between the different versions of SONATA(-NEXT) and its special cases  in  Table~\ref{tab:connection}.\vspace{-0.4cm}
\begin{table}[h]
 \setlength{\tabcolsep}{4pt}
	\centering
	\caption{Connection of SONATA with current algorithms}
	\label{tab:connection}
	\resizebox{\textwidth}{!}{
	\begin{tabular}{c  c  c  c}
		\hline\noalign{\smallskip}
		\multicolumn{1}{l}{\textbf{Algorithms}}            &   \begin{tabular}{@{}l}\textbf{Connection with} \\\textbf{SONATA}   \end{tabular}                                                                                                                          & \begin{tabular}{@{}l}\textbf{Instance of} \\\textbf{Problem (P)}   \end{tabular}                                                                & \begin{tabular}{@{}l} \textbf{Graph topology/}\\\textbf{Weight matrix}\end{tabular}                                                                      
		 \smallskip\\ \hline \noalign{\smallskip}
		\begin{tabular}{c}  NEXT \\ \cite{LorenzoScutari-J'16}    \end{tabular}                        &  \begin{tabular}{@{}l}special case of \\ SONATA \eqref{SONATA-NEXT} \end{tabular}                                                                                                                                                                                                                              & \begin{tabular}[c]{@{}l@{}} $F$ nonconvex\\ $G \neq 0$\\ $\mathcal{K}\subseteq \mathbb{R}^m$\end{tabular} & \begin{tabular}[c]{@{}l@{}}time-varying digraph/\\ doubly-stochastic weights \end{tabular}  \smallskip\\ \hline \noalign{\smallskip}
		\begin{tabular}{c} Aug-DGM  \\       \cite{qu2016harnessing,Xu2015augmented}  \end{tabular}                & \begin{tabular}[c]{@{}l@{}}ATC-NEXT-L \\($\boldsymbol{\alpha}=\alpha\mathbf{1}_I$) \eqref{eq:NEXT matrix form}\end{tabular}               & \begin{tabular}[c]{@{}l@{}} $F$ convex\\ $G = 0$\\ $\mathcal{K} = \mathbb{R}^m$\end{tabular}     & \begin{tabular}[c]{@{}l@{}}static undirected graph/\\ doubly-stochastic weights \end{tabular}                                                            \smallskip\\ \hline \noalign{\smallskip}
		\begin{tabular}{c} DIGing\\ \cite{nedich2016achieving}  \end{tabular}                         & \begin{tabular}{@{}l} CAA-\\NEXT-L   \eqref{eq:NEXT matrix form}  \end{tabular}                                                                                          & \begin{tabular}[c]{@{}l@{}} $F$ convex\\ $G = 0$\\ $\mathcal{K}=\mathbb{R}^m$\end{tabular}     & \begin{tabular}[c]{@{}l@{}}time-varying digraph/\\ doubly-stochastic weights \end{tabular}  \smallskip\\ \hline \noalign{\smallskip}
		\begin{tabular}{c}
		push-DIGing \\\cite{nedich2016achieving}
		\end{tabular} & \begin{tabular}{@{}l}ATC-\\SONATA-L   \eqref{eq:SONATA-L}      \end{tabular}                                                                                 & \begin{tabular}[c]{@{}l@{}} $F$ convex\\ $G = 0$\\ $\mathcal{K}=\mathbb{R}^m$\end{tabular}      & \begin{tabular}[c]{@{}l@{}}time-varying digraph/\\ column-stochastic weights \end{tabular}                          \smallskip\\ \hline \noalign{\smallskip}
		\begin{tabular}{c}
		ADD-OPT \\ \cite{XiKha-J'16}  
		\end{tabular}                       & \begin{tabular}[c]{@{}l@{}}ATC-\\SONATA-L \eqref{eq:SONATA-L}\end{tabular} & \begin{tabular}[c]{@{}l@{}} $F$ convex\\ $G = 0$\\ $\mathcal{K}=\mathbb{R}^m$\end{tabular}     & \begin{tabular}[c]{@{}l@{}}static  digraph/\\ column-stochastic weights \end{tabular}                                                                          \\ \hline
	\end{tabular}}\vspace{-0.2cm}

\end{table}

\section{Convergence Proof of SONATA}\label{sec:convergence proof}\vspace{-0.2cm}

In this section, we prove   convergence of SONATA; because of space limitation we prove only Theorem~\ref{thm:convergence}.  The proof consists in studying the dynamics of a suitably chosen Lyapunov function along the weighted average of the agents' local copies, and of the consensus disagreement and tracking errors. We begin introducing some convenient notation along with some preliminary results. 
For the sake of simplicity, all the results of the forthcoming subsections are stated under the blanket  Assumptions A-\ref{assumption:alpha}.
\vspace{-0.4cm}

\subsection{Notations and preliminaries}\vspace{-0.2cm}
The weighted average and associated consensus disagreement are denoted   by \vspace{-0.2cm}
\begin{equation}
\wavg{x}{n}  \triangleq \frac{1}{I}\left(\boldsymbol{\phi}^{n\top}\otimes \mathbf{I}_m\right) \mathbf{x}^n  \quad \text{and}\quad 
\var{x}{n}  \triangleq \mathbf{x}^n - \Vwavg{x}{n},  \end{equation}
respectively. Similar quantities are defined for  the tracking variables $\mathbf{y}^n_{(i)}$:
\begin{equation}
 \wavg{y}{n}  \triangleq  \frac{1}{I}\left(\boldsymbol{\phi}^{n\top}\otimes \mathbf{I}_m\right) \mathbf{y}^n\quad\text{and}\quad
\var{y}{n} \triangleq \mathbf{y}^n - \Vwavg{y}{n}.
\end{equation}
 
 Recalling \eqref{eq:definition_symbols_2},   
 define the deviation of the local solution  $\widetilde{\mathbf{x}}_{(i)}^n$ of each agent from the weighted average as
\begin{equation}
\Deltaxi{i}^n \triangleq \widetilde{\mathbf{x}}_{(i)}^n - \wavg{x}{n},
\end{equation}
and the associated stacked vector 
\begin{equation}
\begin{aligned}
\Deltax^n &\triangleq \widetilde{\mathbf{x}}^n - \Vwavg{x}{n}.
\end{aligned}
\end{equation}
Note that $\Delta \mathbf{x}^n$ [cf.~\eqref{eq:definition_symbols_2}] can be rewritten as
\begin{equation}\label{eq:Delta_x_eq}
\Delta \mathbf{x}^n= \Deltax^n - \var{x}{n}.
\end{equation}

Using the above notation,  the dynamics of $\wavg{x}{n}$ and $\wavg{y}{n}$   generated by Algorithm~\ref{alg:SONATA} are given by [cf. \eqref{eq: update vec x} and \eqref{eq: update vec y}]:
 \begin{subequations}\label{eq:avg process}
	\begin{align}
	\wavg{x}{n+1}&=   \wavg{x}{n} + \dfrac{\alpha^{n}}{I}\left((\boldsymbol{\phi}^{n})^{\top}\otimes \mathbf{I}_{m}\right)\Deltax^n\label{eq: x weighted ave}\\	
	\wavg{y}{n+1} & = \wavg{y}{n} + \bar{\mathbf{g}}^{n+1} - \bar{\mathbf{g}}^{n}.\label{eq: y weighted ave}
	\end{align}
	\end{subequations}
	Note that, since  $\mathbf{y}^{0} = \mathbf{g}^{0}$ and $\phi_i^0 = 1$, we have $\wavg{y}{n} = \bar{\mathbf{g}}^{n}$, for all $n\in \mathbb{N}_+$. 	

	Finally, we introduce the error-free local solution  map of each agent $i$, denoted by $\widehat{\mathbf{x}}_{(i)}: \mathcal K\rightarrow \mathcal K$: Given  $\mathbf{z}\in \mathcal K$ and $i=1,\ldots, I$, let\vspace{-0.2cm}
	\begin{equation}\label{error-free_best_response}
 \widehat{\mathbf{x}}_{(i)}(\mathbf{z})\triangleq  \underset{_{\mathbf{x}_{(i)}\in\mathcal{K}}}{\text{{argmin}}}\begin{array}[t]{l}
\left\{ \widetilde{f_{i}}\left(\mathbf{x}_{(i)};\mathbf{z}\right)-\nabla G^{-}\left(\mathbf{z}\right)^{\top}\left(\mathbf{x}_{(i)}-\mathbf{z}\right)\right.\\
\left.\quad + \left(\sum_{j\neq i}\nabla f_j(\mathbf{z})\right)^\top \left(\mathbf{x}_{(i)}-\mathbf{z}\right) + G^+(\mathbf{z} ) \right\}. 
\end{array}\vspace{-0.2cm}
 \end{equation}
It is not difficult to check that  $\widehat{\mathbf{x}}_{(i)}(\bullet)$ enjoys the following properties (the proof of the next lemma  follows similar steps as  in \cite[Prop. 8]{facchinei2015parallel} and thus is omitted). 
\begin{lem}\label{lemma_BR_properties} 
	 Each   $\widehat{\mathbf{x}}_{(i)}(\bullet)$ satisfies:
	\begin{itemize}
	\item [i)] \emph{[\texttt{Lipschitz continuity}]:} $\widehat{\mathbf{x}}_{(i)}(\bullet)$ is $\hat{L}$-Lipschitz continuous on $\mathcal K$, that is, there exits a finite $\hat{L}>0$ such that \vspace{-0.1cm}\begin{equation}\label{BR_Lip}
 	\norm{\widehat{\mathbf{x}}_{(i)}(\mathbf{z})-\widehat{\mathbf{x}}_{(i)}(\mathbf{w})} \leq \hat{L}\, \norm{\mathbf{z}-\mathbf{w}},\quad \forall \mathbf{z},\,\mathbf{w}\in \mathcal K;\smallskip
 \end{equation}
	\item[ii)] \emph{[\texttt{Fixed-points}]:} The set of fixed points of   $\widehat{\mathbf{x}}_{(i)}(\bullet)$ coincides with the set of  d-stationary solutions of Problem~\eqref{eq: P}.
	\end{itemize} 
\end{lem}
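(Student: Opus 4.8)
The plan is to treat $\widehat{\mathbf{x}}_{(i)}(\bullet)$ as the single-valued solution map of a strongly convex parametric program and to exploit its first-order optimality (variational-inequality) characterization. First I would observe that, for every fixed $\mathbf{z}\in\mathcal{K}$, the objective in \eqref{error-free_best_response} is the sum of the $\tau_i$-strongly convex smooth term $\widetilde{f}_i(\bullet;\mathbf{z})$ (Assumption~\ref{assumption:f tilde}.2), an affine term, and the convex function $G^+$; hence the minimizer exists and is unique, so $\widehat{\mathbf{x}}_{(i)}$ is well defined. Writing $\mathbf{r}(\mathbf{z})\triangleq \sum_{j\neq i}\nabla f_j(\mathbf{z})-\nabla G^-(\mathbf{z})$, the minimizer $\mathbf{u}\triangleq\widehat{\mathbf{x}}_{(i)}(\mathbf{z})$ is characterized by $\big(\nabla\widetilde{f}_i(\mathbf{u};\mathbf{z})+\mathbf{r}(\mathbf{z})\big)^\top(\mathbf{x}-\mathbf{u})+G^+(\mathbf{x})-G^+(\mathbf{u})\geq 0$ for all $\mathbf{x}\in\mathcal{K}$.

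For part (i), I would take two parameters $\mathbf{z},\mathbf{w}\in\mathcal{K}$, set $\mathbf{u}=\widehat{\mathbf{x}}_{(i)}(\mathbf{z})$ and $\mathbf{v}=\widehat{\mathbf{x}}_{(i)}(\mathbf{w})$, instantiate the two optimality conditions at the cross points $\mathbf{x}=\mathbf{v}$ and $\mathbf{x}=\mathbf{u}$, and add them, so that the $G^+$ contributions cancel. Inserting the splitting $\nabla\widetilde{f}_i(\mathbf{v};\mathbf{w})-\nabla\widetilde{f}_i(\mathbf{u};\mathbf{z})=[\nabla\widetilde{f}_i(\mathbf{v};\mathbf{w})-\nabla\widetilde{f}_i(\mathbf{u};\mathbf{w})]+[\nabla\widetilde{f}_i(\mathbf{u};\mathbf{w})-\nabla\widetilde{f}_i(\mathbf{u};\mathbf{z})]$ and using $\tau_i$-strong convexity of $\widetilde{f}_i(\bullet;\mathbf{w})$ on the first bracket, while bounding the second bracket via Assumption~\ref{assumption:f tilde}.3 ($\tilde{L}_i$-Lipschitzness of $\nabla\widetilde{f}_i(\mathbf{x};\bullet)$) and bounding $\mathbf{r}(\mathbf{w})-\mathbf{r}(\mathbf{z})$ by $\big(L_G+\sum_{j\neq i}L_j\big)\norm{\mathbf{w}-\mathbf{z}}$ through A.2--A.3, one obtains $\tau_i\norm{\mathbf{u}-\mathbf{v}}^2\leq \big(\tilde{L}_i+L_G+\sum_{j\neq i}L_j\big)\norm{\mathbf{w}-\mathbf{z}}\,\norm{\mathbf{u}-\mathbf{v}}$ after Cauchy--Schwarz. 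Dividing by $\norm{\mathbf{u}-\mathbf{v}}$ yields \eqref{BR_Lip} with $\hat{L}\triangleq\max_{i}\big(\tilde{L}_i+L_G+\sum_{j\neq i}L_j\big)/\tau_i$, which is finite. Note that only Lipschitz continuity of the gradients on $\mathcal{K}$ is used, never their boundedness, so this step is unaffected by the paper's removal of the bounded-(sub)gradient assumption.

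For part (ii), I would specialize the fixed-point condition: $\mathbf{z}^\star=\widehat{\mathbf{x}}_{(i)}(\mathbf{z}^\star)$ iff the optimality condition above holds with $\mathbf{u}=\mathbf{z}=\mathbf{z}^\star$. Using $\nabla\widetilde{f}_i(\mathbf{z}^\star;\mathbf{z}^\star)=\nabla f_i(\mathbf{z}^\star)$ (Assumption~\ref{assumption:f tilde}.1), the gradient term collapses to $\nabla f_i(\mathbf{z}^\star)+\sum_{j\neq i}\nabla f_j(\mathbf{z}^\star)-\nabla G^-(\mathbf{z}^\star)=\nabla F(\mathbf{z}^\star)-\nabla G^-(\mathbf{z}^\star)$, giving $\big(\nabla F(\mathbf{z}^\star)-\nabla G^-(\mathbf{z}^\star)\big)^\top(\mathbf{x}-\mathbf{z}^\star)+G^+(\mathbf{x})-G^+(\mathbf{z}^\star)\geq 0$ for all $\mathbf{x}\in\mathcal{K}$, a condition that is independent of $i$ (so the fixed-point set is the same for all agents). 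The remaining step is to show this is equivalent to d-stationarity of $U$ at $\mathbf{z}^\star$, i.e. $U'(\mathbf{z}^\star;\mathbf{x}-\mathbf{z}^\star)\geq 0$ for all $\mathbf{x}\in\mathcal{K}$. The reverse implication uses convexity of $G^+$ through the chord bound $(G^+)'(\mathbf{z}^\star;\mathbf{x}-\mathbf{z}^\star)\leq G^+(\mathbf{x})-G^+(\mathbf{z}^\star)$; the forward implication applies the displayed inequality at the segment point $(1-t)\mathbf{z}^\star+t\mathbf{x}\in\mathcal{K}$, divides by $t$, and lets $t\downarrow 0$ to recover the directional derivative of $G^+$.

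The main obstacle I anticipate is not the Lipschitz estimate, which is a routine sensitivity argument, but the clean handling of part (ii): because $G^+$ is nonsmooth and $-G^-$ nonconvex, one must reason with directional derivatives rather than gradients and verify that the ``minimize-the-convex-surrogate'' optimality condition genuinely coincides with d-stationarity of the DC objective $U$. The scaling/limit argument for the forward implication, combined with the convexity chord inequality for the reverse, is where the care lies; everything else reduces to bookkeeping with Assumptions~A and D.
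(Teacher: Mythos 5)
Your proposal is correct and is essentially the argument the paper relies on: the paper omits the proof, deferring to \cite[Prop.~8]{facchinei2015parallel}, and that reference proves part (i) by exactly your cross-substitution of the two variational inequalities plus strong monotonicity and the Lipschitz bounds, and part (ii) by the same fixed-point/directional-derivative equivalence (convexity chord bound one way, the $t\downarrow 0$ scaling limit the other). The only point worth flagging is that you correctly read the $G^+(\mathbf{z})$ appearing in \eqref{error-free_best_response} as the intended $G^+(\mathbf{x}_{(i)})$, consistent with \eqref{eq: x_tilde}; otherwise the nonsmooth term would be a constant and part (ii) would not capture it.
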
  
 
 The next result shows that, as expected, the disagreement between agent $i$'s  solution $\widetilde{\mathbf{x}}_{(i)}^n$ and its error-free counterpart  $\widehat{\mathbf{x}}_{(i)}(\mathbf{x}_{(i)}^n)$ asymptotically vanishes if both the consensus error $\var{x}{n}$ and the tracking error $\var{y}{n}$ do so.\smallskip 
 
 \begin{lem}
 \label{lem:best-response consistency}
	    $\widetilde{\mathbf{x}}_{(i)}^{n}$  [cf.~(\ref{eq: x_tilde})] and  $\widehat{\mathbf{x}}_{i}(\mathbf{x}^{n}_{(i)})$ [cf.~(\ref{error-free_best_response})]  satisfy:\vspace{-0.2cm}\begin{equation}\label{BR-consistency_eq}
	 \norm{\widehat{\mathbf{x}}_{i}(\mathbf{x}^{n}_{(i)})- \widetilde{\mathbf{x}}_{(i)}^{n}} \leq \dfrac{I}{\tau_{i}} \,\norm{\var{y}{n}} + { \frac{2\,I\,L}{\tau_i}\, \|\var{x}{n}\|}.\vspace{-0.1cm}
	\end{equation}
	Therefore,  
$ \|\var{x}{n}\|,  \|\var{y}{n}\|\underset{n\to\infty}{\longrightarrow} 0$ $\Rightarrow$  $\|\widehat{\mathbf{x}}_{i}(\mathbf{x}^{n}_{(i)})-\widetilde{\mathbf{x}}_{(i)}^{n}\| \underset{n\to\infty}{\longrightarrow} 0$.
\end{lem}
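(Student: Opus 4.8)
The plan is to read off both $\widetilde{\mathbf{x}}_{(i)}^{n}$ [cf.~\eqref{eq: x_tilde}] and $\widehat{\mathbf{x}}_{i}(\mathbf{x}_{(i)}^{n})$ [cf.~\eqref{error-free_best_response}] as the (unique) minimizers over $\mathcal{K}$ of two composite objectives that share the \emph{same} $\tau_i$-strongly convex part, namely $\widetilde{f}_i(\cdot\,;\mathbf{x}_{(i)}^n)-\nabla G^-(\mathbf{x}_{(i)}^n)^\top(\cdot-\mathbf{x}_{(i)}^n)+G^+(\cdot)$ (strong convexity by Assumption~\ref{assumption:f tilde}.2, the $-G^-$ term being merely a fixed linear term here), and differ only in their remaining linear part. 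Subtracting the two linear coefficients, the objective of \eqref{eq: x_tilde} differs from that of \eqref{error-free_best_response} by the vector $\mathbf{r}_i^n\triangleq \big(I\,\mathbf{y}_{(i)}^n-\nabla f_i(\mathbf{x}_{(i)}^n)\big)-\sum_{j\neq i}\nabla f_j(\mathbf{x}_{(i)}^n)=I\,\mathbf{y}_{(i)}^n-\sum_{j=1}^I\nabla f_j(\mathbf{x}_{(i)}^n)$. I would therefore first establish a perturbation (Lipschitz-in-the-linear-term) bound $\|\widetilde{\mathbf{x}}_{(i)}^{n}-\widehat{\mathbf{x}}_{i}(\mathbf{x}_{(i)}^{n})\|\le \tau_i^{-1}\|\mathbf{r}_i^n\|$, and then bound $\|\mathbf{r}_i^n\|$ in terms of the consensus and tracking disagreements.

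For the perturbation bound I would write the first-order optimality (variational-inequality) conditions of the two strongly convex problems. Denoting the two minimizers $p\triangleq \widetilde{\mathbf{x}}_{(i)}^{n}$ and $q\triangleq\widehat{\mathbf{x}}_{i}(\mathbf{x}_{(i)}^{n})$, their common strongly convex composite part by $h$, and their linear coefficients by $\mathbf{a},\mathbf{b}$ (so $\mathbf{a}-\mathbf{b}=\mathbf{r}_i^n$), optimality of $p$ for $h(\cdot)+\mathbf{a}^\top\cdot$ and of $q$ for $h(\cdot)+\mathbf{b}^\top\cdot$ over $\mathcal{K}$ gives $h(q)+\mathbf{a}^\top q\ge h(p)+\mathbf{a}^\top p+\tfrac{\tau_i}{2}\|q-p\|^2$ together with the symmetric inequality obtained by swapping $(p,\mathbf{a})$ and $(q,\mathbf{b})$. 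Adding the two yields $(\mathbf{a}-\mathbf{b})^\top(q-p)\ge \tau_i\|p-q\|^2$, and Cauchy--Schwarz then gives $\|p-q\|\le \tau_i^{-1}\|\mathbf{a}-\mathbf{b}\|=\tau_i^{-1}\|\mathbf{r}_i^n\|$. This step is the technical heart of the argument, and the main subtlety: it must be carried out with the nonsmooth convex $G^+$ and the constraint $\mathcal{K}$ present, which I would handle by keeping $G^+$ (and the indicator of $\mathcal{K}$) inside $h$ and invoking strong monotonicity of $\partial h$, rather than differentiating; set up this way the computation is short.

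It remains to bound $\|\mathbf{r}_i^n\|$. Here I would use the gradient-tracking invariant already established after \eqref{eq:avg process}, namely $\wavg{y}{n}=\bar{\mathbf{g}}^n=\tfrac{1}{I}\sum_{j=1}^I\nabla f_j(\mathbf{x}_{(j)}^n)$, so that $I\,\mathbf{y}_{(i)}^n=\sum_{j=1}^I\nabla f_j(\mathbf{x}_{(j)}^n)+I\big(\mathbf{y}_{(i)}^n-\wavg{y}{n}\big)$. Substituting gives $\mathbf{r}_i^n=I\big(\mathbf{y}_{(i)}^n-\wavg{y}{n}\big)+\sum_{j=1}^I\big(\nabla f_j(\mathbf{x}_{(j)}^n)-\nabla f_j(\mathbf{x}_{(i)}^n)\big)$. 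The first summand is $I$ times the $i$-th block of $\var{y}{n}$, hence of norm at most $I\|\var{y}{n}\|$; for the second I would invoke the $L_j$-Lipschitz continuity of $\nabla f_j$ (Assumption~A.2) together with the block estimate $\|\mathbf{x}_{(j)}^n-\mathbf{x}_{(i)}^n\|\le \|\mathbf{x}_{(j)}^n-\wavg{x}{n}\|+\|\mathbf{x}_{(i)}^n-\wavg{x}{n}\|\le 2\|\var{x}{n}\|$, which yields $\sum_{j}L_j\cdot 2\|\var{x}{n}\|=2L\|\var{x}{n}\|\le 2IL\|\var{x}{n}\|$ with $L=\sum_j L_j$ as in \eqref{eq:deinitions_c_L}. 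Combining this with the perturbation bound of the previous paragraph gives exactly \eqref{BR-consistency_eq}, and the claimed implication follows immediately, since the right-hand side vanishes whenever $\|\var{x}{n}\|,\|\var{y}{n}\|\to 0$.
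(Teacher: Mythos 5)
Your proof is correct. The paper does not actually include a proof of this lemma, so there is nothing to diverge from; your argument is the natural one and is fully consistent with the machinery the paper uses elsewhere (the first-order optimality/strong-convexity manipulation mirrors the one in the proof of Lemma~\ref{lem:V-U part}, and the identity $\wavg{y}{n}=\bar{\mathbf{g}}^n$ you invoke is exactly the invariant established after \eqref{eq:avg process}). Two small remarks: you correctly read the $G^+(\mathbf{z})$ appearing in \eqref{error-free_best_response} as $G^+(\mathbf{x}_{(i)})$ (a typo in the paper; with a constant $G^+(\mathbf{z})$ the map would not have the fixed-point property claimed in Lemma~\ref{lemma_BR_properties}), and your bound actually comes out as $\frac{2L}{\tau_i}\|\var{x}{n}\|$ for the consensus term, which is tighter than the stated $\frac{2IL}{\tau_i}\|\var{x}{n}\|$ and implies it since $I\geq 1$.
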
 

 The last result of this section is a 
 standard martingale-like result; the proof follows similar to that of \cite[Lemma 1]{Ber_Tsitsi} and thus is omitted. 
 \begin{lem}\label{B-martingale}
 	 {Let $\{X^n\}_{n\in\mathbb{N}_+}$, $\{Y^n\}_{n\in\mathbb{N}_+}$ and $\{Z^n\}_{n\in\mathbb{N}_+}$ be three sequences such that $X^n$ and $Z^n$ are nonnegative, for all $n\in \mathbb{N}_+$. Suppose that  
 	\begin{equation}\label{eq:B-step-descent}
 	\sum_{k=0}^{\bar{B}-1} Y^{n+\bar{B}+ k} \leq \sum_{k=0}^{\bar{B}-1} Y^{n+ k} - \sum_{k=0}^{\bar{B}-1} X^{ n+k} + \sum_{k=0}^{\bar{B}-1} Z^{n+k}, \quad n = 0,1,\ldots,
 	\end{equation}
 	and that $\sum_{n=0}^{\infty} Z^n < +\infty$. Then, either $\sum_{k=0}^{\bar{B}-1} Y^{n+ k} \to -\infty$, or else $\sum_{k=0}^{\bar{B}-1} Y^{n+ k}$ converges to a finite value and $\sum_{n=0}^{\infty} X^n < +\infty$.}
 \end{lem}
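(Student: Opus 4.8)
The plan is to collapse the $\bar{B}$-term window into a single scalar and to reduce the block (i.e.\ $\bar{B}$-step) recursion \eqref{eq:B-step-descent} to a family of ordinary one-step ``almost-supermartingale'' recursions, one per residue class modulo $\bar{B}$, each of which is then handled by the classical deterministic Robbins--Siegmund argument used in \cite[Lemma~1]{Ber_Tsitsi}. First I would set $S^n \triangleq \sum_{k=0}^{\bar{B}-1} Y^{n+k}$, $\Xi^n \triangleq \sum_{k=0}^{\bar{B}-1} X^{n+k} \ge 0$, and $\Theta^n \triangleq \sum_{k=0}^{\bar{B}-1} Z^{n+k} \ge 0$, so that hypothesis \eqref{eq:B-step-descent} becomes the compact one-line inequality $S^{n+\bar{B}} \le S^n - \Xi^n + \Theta^n$. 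The summability of $\{Z^n\}$ transfers to the windowed quantities, since each $Z^m$ is counted in at most $\bar{B}$ of the sums $\Theta^n$, giving $\sum_{n} \Theta^n \le \bar{B}\,\sum_n Z^n < +\infty$.

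Next, for each fixed residue $r\in\{0,\dots,\bar{B}-1\}$ I would freeze the subsequence $s_j \triangleq S^{r+j\bar{B}}$, $\xi_j \triangleq \Xi^{r+j\bar{B}}$, $\theta_j \triangleq \Theta^{r+j\bar{B}}$, which now obeys the genuine \emph{one-step} inequality $s_{j+1} \le s_j - \xi_j + \theta_j$ with $\xi_j,\theta_j\ge 0$ and $\sum_j \theta_j \le \sum_n \Theta^n < +\infty$. The central device is the auxiliary sequence $t_j \triangleq s_j + \sum_{l\ge j}\theta_l$, which is well defined because the tail $\sum_{l\ge j}\theta_l$ is finite and vanishes as $j\to\infty$. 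A one-line computation gives $t_{j+1}-t_j = (s_{j+1}-s_j)-\theta_j \le -\xi_j \le 0$, so $\{t_j\}$ is nonincreasing. A nonincreasing real sequence either diverges to $-\infty$ or converges to a finite limit, and since $s_j$ and $t_j$ differ by the vanishing tail they share this alternative: in the divergent case $s_j\to-\infty$, while in the convergent case $s_j\to t^\ast$ finite, and telescoping $\xi_j \le t_j - t_{j+1}$ yields $\sum_j \xi_j \le t_0 - t^\ast < +\infty$.

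Finally I would reassemble the residue classes. If at least one class is in the divergent alternative, we are in the first case $S^n\to-\infty$; otherwise every class converges, so $\sum_n \Xi^n = \sum_{r=0}^{\bar B-1}\sum_j \xi_j < +\infty$, and hence $\sum_n X^n \le \sum_n \Xi^n < +\infty$ (using $X^n \le \Xi^n$), with $\{S^n\}$ settling to a finite value. The main obstacle is precisely the block nature of \eqref{eq:B-step-descent}: because the descent couples indices $\bar{B}$ apart, $\{S^n\}$ is not itself monotone and no direct monotonicity argument applies. Splitting into residue classes is exactly what disentangles \eqref{eq:B-step-descent} into $\bar{B}$ independent scalar recursions, after which the monotone-potential argument of \cite[Lemma~1]{Ber_Tsitsi} carries over verbatim; one should only take care, when merging the classes, that the finite per-class limits are compatible, which is guaranteed here since the tail corrections $\sum_{l\ge j}\theta_l$ vanish.
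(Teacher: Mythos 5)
Your reduction to residue classes and the monotone potential $t_j=s_j+\sum_{l\ge j}\theta_l$ is exactly the deterministic Robbins--Siegmund device behind \cite[Lemma~1]{Ber_Tsitsi}, which is all the paper itself offers (it omits the proof and defers to that reference). Up to and including the per-class conclusions---each subsequence $\{S^{r+j\bar{B}}\}_{j}$ either diverges to $-\infty$ or converges to a finite limit with $\sum_j\xi_j<+\infty$---your argument is correct and complete, and the bookkeeping $\sum_n\Theta^n\le\bar{B}\sum_nZ^n$ and $X^n\le\Xi^n$ is fine.

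The gap is in the reassembly. The claim that ``if at least one class is in the divergent alternative, we are in the first case $S^n\to-\infty$'' is a non-sequitur, and the asserted compatibility of the finite per-class limits does not follow from the vanishing of the tails $\sum_{l\ge j}\theta_l$: those tails control $t_j-s_j$ within one class, not the relation between classes. Concretely, take $\bar{B}=2$ and $X^n=Z^n\equiv 0$. With $Y^{2j}=-j$, $Y^{2j+1}=j+1$ one gets $S^{2j}=1$ and $S^{2j+1}=0$, so \eqref{eq:B-step-descent} holds with equality yet $S^n$ oscillates between two distinct finite values; with $Y^{2j}=-j^2$, $Y^{2j+1}=j^2$ one gets $S^{2j}=0$ and $S^{2j+1}=-2j-1$, so one class converges while the other diverges and $S^n\not\to-\infty$. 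These examples show that the dichotomy for the full sequence $\{S^n\}$ is genuinely false as stated, so no merging argument can close the gap; the lemma is only correct when read per windowed subsequence (equivalently, with ``$\to-\infty$'' replaced by ``$\liminf_n S^n=-\infty$'' and ``converges'' by ``each $\{S^{r+j\bar{B}}\}_j$ converges''). That weaker version---which your proof does establish---is also all the paper ever uses: in both applications $S^n$ is bounded below, so every class converges and $\sum_nX^n\le\sum_n\Xi^n<+\infty$ follows. The fix is therefore to state and prove the per-class version and delete the two merging claims, not to search for a better merging argument.
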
\vspace{-0.4cm}
  
\subsection{Average descent}	 \vspace{-0.2cm}
 We begin our analysis  studying the  dynamics of   $U$ along the trajectory of $\wavg{x}{n}$. We define the total energy of the optimization input $\alpha^n\Deltax^{n}$ and consensus errors $\var{x}{n}$ and $\var{y}{n}$ in $\bar{B}$ consecutive iterations [$\bar{B}$ is defined in Lemma~\ref{cor:averaging matrix column stochastic}]:\vspace{-0.2cm} 
\begin{equation}\label{eq: energy_consensus}
\begin{aligned}
\eDeltax^n  \triangleq \cumsum \left(\alpha^{n+t}\right)^2 \norm{\Deltax^{n+t}}^2,\quad
\exorth^n   \triangleq \cumsum \norm{\var{x}{n+t}}^2,\quad 
\eyorth^n  \triangleq \cumsum \norm{\var{y}{n+t}}^2.
\end{aligned}
\end{equation}
Recalling the definitions of $c_\tau$, $\LBphi$, and $\UBphi$   [see  \eqref{consensus-error-bound-pert-cons} and 
\eqref{eq:deinitions_c_L}], we have the following.
\begin{lem}\label{lem:V-U part}
Let $\left\{(\mathbf{x}^{n},\mathbf{y}^{n})\right\}_{n\in\mathbb{N}_+}$ be the sequence generated by Algorithm~\ref{alg:SONATA}. 
Then,  there holds 
\begin{equation}\label{eq:V-U part column stochastic}
\begin{aligned}
&\phantom{{}\leq{}}\sum_{k=0}^{\bar{B}-1}U\left(\wavg{x}{n+\bar{B}+k}\right)\\
& \leq  \sum_{k=0}^{\bar{B}-1}U\left(\wavg{x}{n+k}\right)- \frac{c_{\tau}\, \LBphi}{I}\cdot\sum_{k=0}^{\bar{B}-1}\cumsum\alpha^{n+k+t}\norm{\Deltax^{n+k+t}}^{2} \\
&\phantom{{}\leq{}}+{ \frac{\UBphi}{2}\left(\frac{L+{L_G}}{I}\,\UBphi + c_{L}\epsilon_{x} + \epsilon_{y}\right)\sum_{k=0}^{\bar{B}-1}\eDeltax^{n+k}} \,+\,\underbrace{\frac{\UBphi}{2}\sum_{k=0}^{\bar{B}-1}\left(c_{L}\epsilon_{x}^{-1}\exorth^{n+k} + \epsilon_{y}^{-1}\eyorth^{n+k}\right)}_{\text{\emph{term iv}}},
\end{aligned}
\end{equation}
where $\epsilon_x >0$ and $\epsilon_y>0$ are arbitrary, finite constants.  
\end{lem}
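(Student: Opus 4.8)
The plan is to prove a single-iteration descent inequality for $U$ along the weighted-average trajectory $\{\wavg{x}{n}\}$ and then aggregate it over a length-$\bar B$ window by block-telescoping. Write $U = V + G^{+}$ with $V\triangleq F - G^{-}$, whose gradient is Lipschitz with constant $L+L_G$. From \eqref{eq: x weighted ave} the average evolves as $\wavg{x}{m+1}=\wavg{x}{m}+\alpha^{m}\mathbf{d}^{m}$ with $\mathbf{d}^{m}\triangleq\tfrac1I\sum_{i}\phi_i^{m}\Deltaxi{i}^{m}$; since $\sum_i\phi_i^{m}=I$ (column stochasticity of $\mathbf A^{m}$ and $\boldsymbol\phi^{0}=\mathbf 1$), one has $\wavg{x}{m}+\mathbf{d}^{m}=\tfrac1I\sum_i\phi_i^{m}\widetilde{\mathbf x}_{(i)}^{m}\in\mathcal K$, a convex combination of the feasible $\widetilde{\mathbf x}_{(i)}^{m}$. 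First I would apply the descent lemma to $V$ and Jensen's inequality (convexity of $G^{+}$, using $\alpha^{m}\in(0,1)$) to $G^{+}$, obtaining
\[
U(\wavg{x}{m+1}) \le U(\wavg{x}{m}) + \alpha^{m} T^{m} + \tfrac{L+L_G}{2}(\alpha^{m})^{2}\norm{\mathbf{d}^{m}}^{2},
\]
where $T^{m}\triangleq\tfrac1I\sum_i\phi_i^{m}\big[\nabla V(\wavg{x}{m})^{\top}\Deltaxi{i}^{m}+G^{+}(\widetilde{\mathbf x}_{(i)}^{m})-G^{+}(\wavg{x}{m})\big]$.

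Next I would bound $T^{m}$ using the structure of the SCA subproblem \eqref{eq: x_tilde}. The first-order optimality (variational inequality) of $\widetilde{\mathbf x}_{(i)}^{m}$ tested at $\mathbf z=\wavg{x}{m}\in\mathcal K$, together with the $\tau_i$-strong convexity of $\widetilde f_i(\cdot;\mathbf x_{(i)}^{m})$ (Assumption~D2), yields $T^{m}\le\tfrac1I\sum_i\phi_i^{m}\big[\mathbf e_i^{\top}\Deltaxi{i}^{m}-\tau_i\norm{\Deltaxi{i}^{m}}^{2}\big]$, where $\mathbf e_i\triangleq\nabla V(\wavg{x}{m})-\nabla\widetilde f_i(\wavg{x}{m};\mathbf x_{(i)}^{m})-\mathbf r_i$ and $\mathbf r_i$ is the linear coefficient multiplying $(\mathbf x_{(i)}-\mathbf x_{(i)}^{m})$ in \eqref{surrogate_F}. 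The crucial computation — and what I expect to be the main obstacle — is to show that $\mathbf e_i$ contains no raw (possibly unbounded) gradient: substituting $\mathbf r_i$, using gradient-consistency~D1 and Lipschitz-in-the-second-argument~D3 to replace $\nabla\widetilde f_i(\wavg{x}{m};\mathbf x_{(i)}^{m})$ by $\nabla f_i(\wavg{x}{m})$ up to an $O(\norm{(\var{x}{m})_i})$ term, and invoking the tracking identity $I\mathbf y_{(i)}^{m}=\sum_j\nabla f_j(\mathbf x_{(j)}^{m})+I(\var{y}{m})_i$ (valid because $\wavg{y}{n}=\bar{\mathbf g}^{n}$), the unbounded terms $\nabla f_i(\mathbf x_{(i)}^{m})$ and $\sum_j\nabla f_j$ cancel exactly. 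What remains is $\mathbf e_i=\sum_{j\ne i}\big[\nabla f_j(\wavg{x}{m})-\nabla f_j(\mathbf x_{(j)}^{m})\big]+\big[\nabla G^{-}(\mathbf x_{(i)}^{m})-\nabla G^{-}(\wavg{x}{m})\big]-\mathbf s_i-I(\var{y}{m})_i$, i.e. only gradient \emph{differences} (controlled by the Lipschitz constants) plus the tracking disagreement. This exact cancellation is precisely what lets the argument dispense with the bounded-gradient assumption.

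I would then split $\mathbf e_i=\mathbf e_i^{x}-I(\var{y}{m})_i$ and bound the stacked consensus part, via the triangle inequality, Lipschitz continuity, and Cauchy--Schwarz (which supplies the $\sqrt I$), as $\norm{\mathbf e^{x}}\le(L\sqrt I+L_{\mathrm{mx}}+\tilde L_{\mathrm{mx}})\,\norm{\var{x}{m}}=I\,c_L\,\norm{\var{x}{m}}$, while the tracking part stacks to $I\norm{\var{y}{m}}$. Applying Young's inequality to $\alpha^{m}\langle\mathbf e^{x},\Deltax^{m}\rangle$ with parameter $I c_L\epsilon_x$ and to the tracking cross-term with parameter $I\epsilon_y$, using $\phi_i^{m}\in[\LBphi,\UBphi]$ and $\norm{\mathbf d^{m}}^{2}\le\tfrac{\UBphi^{2}}{I}\norm{\Deltax^{m}}^{2}$, and keeping the full strong-convexity term $-\tfrac{\alpha^{m}c_\tau\LBphi}{I}\norm{\Deltax^{m}}^{2}$, produces the per-step inequality
\[
U(\wavg{x}{m+1}) \le U(\wavg{x}{m}) - \tfrac{c_\tau\LBphi}{I}\alpha^{m}\norm{\Deltax^{m}}^{2} + \tfrac{\UBphi}{2}\big(\tfrac{L+L_G}{I}\UBphi + c_L\epsilon_x + \epsilon_y\big)(\alpha^{m})^{2}\norm{\Deltax^{m}}^{2} + \tfrac{\UBphi}{2}\big(c_L\epsilon_x^{-1}\norm{\var{x}{m}}^{2}+\epsilon_y^{-1}\norm{\var{y}{m}}^{2}\big),
\]
the chosen Young parameters reproducing exactly the coefficients in the claim.

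Finally I would telescope this inequality over $m=n+k,\dots,n+k+\bar B-1$ and then sum the resulting bounds over $k=0,\dots,\bar B-1$; recognizing the window sums $\cumsum(\alpha^{n+k+t})^{2}\norm{\Deltax^{n+k+t}}^{2}=\eDeltax^{n+k}$, $\cumsum\norm{\var{x}{n+k+t}}^{2}=\exorth^{n+k}$, and $\cumsum\norm{\var{y}{n+k+t}}^{2}=\eyorth^{n+k}$ delivers \eqref{eq:V-U part column stochastic} verbatim. The remaining work is the routine verification of the Lipschitz/Cauchy--Schwarz constants, which I have sketched above.
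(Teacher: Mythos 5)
Your proposal is correct and follows essentially the same route as the paper's proof: split $U$ into the smooth part $F-G^{-}$ (descent lemma along the weighted-average dynamics) plus $G^{+}$ (convexity/Jensen), use the first-order optimality of the SCA subproblem together with strong convexity of $\widetilde f_i$, exploit the tracking identity $\wavg{y}{n}=\bar{\mathbf g}^{n}$ so that the unbounded gradients cancel into Lipschitz-controlled differences yielding the constant $c_L$, apply Young's inequality with $\epsilon_x,\epsilon_y$, and finally telescope over $\bar B$ steps and sum over the window. The constants you track ($\UBphi^{2}(L+L_G)/I$, $c_L\epsilon_x$, $\epsilon_y$, and $c_\tau\LBphi/I$) match the paper's exactly.
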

\begin{proof}
Denote for simplicity $\bar{F} \triangleq F - G^-$.
Since $\widetilde{f}_i$ is strongly convex and $G^+$ is convex, by the first order optimality of $\widetilde{\mathbf{x}}_{(i)}^n$, we have \vspace{-0.2cm}
	\begin{multline}\label{eq:x tilde descent}	
	\left(\Deltaxi{i}^{n}\right)^\top\left(I\cdot\mathbf{y}_{(i)}^n +\nabla \widetilde{f}_i(\wavg{x}{n};\mathbf{x}_{(i)}^n) - \nabla G^- (\mathbf{x}_{(i)}^n) -\nabla f_i(\mathbf{x}_{(i)}^n)\right)\\
		+  G^+(\widetilde{\mathbf{x}}_{(i)}^n) - G^+(\wavg{x}{n}) \leq -\tau_i\|\Deltaxi{i}^{n}\|^2.
	\end{multline}

Since $\nabla f_i$ and $\nabla G^-$ are $L_i$ and  $L_G$-Lipschitz, respectively,  $\nabla F$ is $(L+L_G)$-Lipschitz, where $L \triangleq \sum_{i=1}^I L_i$ [cf. def. \eqref{eq:deinitions_c_L}]. Applying the descent lemma to $\bar{F}$ and using \eqref{eq: x weighted ave} yields 
\begin{align} \label{eq:F-SONATA}
 &\bar{F}\left(\wavg{x}{n+1}\right)  \nonumber\\
& \leq \bar{F}\left(\wavg{x}{n}\right) + \frac{\alpha^{n}}{I}\,\nabla \bar{F}\left(\wavg{x}{n}\right)^{\top}\left((\boldsymbol{\phi}^{n})^{\top}\otimes \mathbf{I}_{m}\right)\Deltax^n \nonumber\\
&\quad + \frac{L+L_G}{2}\cdot \frac{\left(\alpha^{n}\right)^{2}}{I} \norm{\left((\boldsymbol{\phi}^{n})^{\top}\otimes \mathbf{I}_{m}\right)\Deltax^n}^{2}\nonumber\\
& \stackrel{(a)}{\leq} \bar{F}\left(\wavg{x}{n}\right) + \frac{L+L_G}{2} \cdot \frac{\left(\alpha^{n}\right)^{2}}{I}\,\UBphi^2\,\norm{\Deltax^{n}}^{2}\nonumber\\
&\phantom{{}\leq{}} - \frac{\alpha^{n}}{I}\sum_{i=1}^{I}\phi_{i}^{n}\left(\tau_{i}\norm{\Deltaxi{i}^{n}}^{2} + G^+(\widetilde{\mathbf{x}}_{(i)}^n) - G^+(\wavg{x}{n})\right)\nonumber \\
& \phantom{{}\leq{}}+\frac{\alpha^{n}}{I}\sum_{i=1}^{I}\phi_{i}^{n}\left(\nabla \bar{F}\left(\wavg{x}{n}\right) + \nabla G^- (\mathbf{x}_{(i)}^n)- I\cdot \wavg{y}{n} + I\cdot \wavg{y}{n} - I\cdot \mathbf{y}_{(i)}^{n}\right)^{\top}\Deltaxi{i}^{n} \nonumber\\
& \phantom{{}\leq{}}+ \frac{\alpha^{n}}{I}\sum_{i=1}^{I}\phi_{i}^{n}\left(\nabla f_{i}(\mathbf{x}_{(i)}^{n}) -\nabla f_{i}(\wavg{x}{n}) + \nabla \widetilde{f}_{i}(\wavg{x}{n};\wavg{x}{n})- \nabla \widetilde{f}_{i}(\wavg{x}{n};\mathbf{x}_{(i)}^{n})\right)^{\top}\Deltaxi{i}^{n} \nonumber\\
& \stackrel{(b)}{\leq} \bar{F}\left(\wavg{x}{n}\right) + \frac{L+L_G}{2} \cdot \frac{\left(\alpha^{n}\right)^{2}}{I}\,\UBphi^2\,\|\Deltax^{n}\|^{2}\nonumber\\
& \phantom{{}\leq{}}- \frac{\alpha^{n}}{I}\sum_{i=1}^{I}\phi_{i}^{n}\left(\tau_{i}\|\Deltaxi{i}^{n}\|^{2} + G^+(\widetilde{\mathbf{x}}_{(i)}^n) - G^+(\wavg{x}{n})\right) \nonumber\\
& \phantom{{}\leq{}}+ \frac{\alpha^{n}}{I}\sum_{i=1}^{I}\phi_{i}^{n}\norm{\nabla \bar{F}(\wavg{x}{n}) -\Big(\sum_{j=1}^{I}\nabla f_{j}(\mathbf{x}_{(j)}^{n})- \nabla G^-(\mathbf{x}_{(i)}^n)\Big)} \norm{\Deltaxi{i}^{n}} \nonumber\\
& \phantom{{}\leq{}}+ \alpha^{n}\sum_{i=1}^{I}\phi_{i}^{n}\|\wavg{y}{n} - \mathbf{y}_{(i)}^{n}\|\,\|\Deltaxi{i}^{n}\|\nonumber\\
&  \phantom{{}\leq{}} + \frac{\alpha^{n}}{I}\sum_{i=1}^{I}\phi_{i}^{n}\norm{\nabla f_{i}(\mathbf{x}_{(i)}^{n}) - \nabla f_{i}\left(\wavg{x}{n}\right)}\,\|\Deltaxi{i}^{n}\| \nonumber\\
& \phantom{{}\leq{}}+ \frac{\alpha^{n}}{I}\sum_{i=1}^{I}\phi_{i}^{n}\norm{\nabla \widetilde{f}_{i}(\wavg{x}{n};\wavg{x}{n})- \nabla \widetilde{f}_{i}(\wavg{x}{n};\mathbf{x}_{(i)}^{n})}\,\|\Deltaxi{i}^{n}\|\nonumber\\
&\stackrel{(c)}{\leq}  \bar{F}\left(\wavg{x}{n}\right)+ \frac{L+L_G}{2} \cdot \frac{\left(\alpha^{n}\right)^{2}}{I}\,\UBphi^2\,\|\Deltax^{n}\|^{2}\nonumber\\
&  \phantom{{}\leq{}} - \frac{\alpha^{n}}{I}\sum_{i=1}^{I}\phi_{i}^{n}\left(\tau_{i}\|\Deltaxi{i}^{n}\|^{2} + G^+(\widetilde{\mathbf{x}}_{(i)}^n) - G^+(\wavg{x}{n})\right) \nonumber\\
& \phantom{{}\leq{}}+\frac{\alpha^{n}}{I}\sum_{i=1}^{I}\phi_{i}^{n}\left(\sum_{j=1}^{I}L_{j}\|\wavg{x}{n}- \mathbf{x}_{(j)}^{n}\| + L_G\|\wavg{x}{n}- \mathbf{x}_{(i)}^{n}\| \right)\,\|\Deltaxi{i}^{n}\| \nonumber\\
& \phantom{{}\leq{}}  +\alpha^{n}\sum_{i=1}^{I}\phi_{i}^{n}\|\wavg{y}{n} - \mathbf{y}_{(i)}^{n}\|\|\Deltaxi{i}^{n}\|\nonumber\\
& \phantom{{}\leq{}}+\frac{\alpha^{n}}{I}\sum_{i=1}^{I}\phi_{i}^{n}\left(L_{i}\|\mathbf{x}_{(i)}^{n} - \wavg{x}{n}\|\,\|\Deltaxi{i}^{n}\| +\tilde{L}_{i}\|\mathbf{x}_{(i)}^{n} - \wavg{x}{n}\|\,\|\Deltaxi{i}^{n}\|\right)\nonumber\\
& \stackrel{(d)}{\leq} \bar{F}\left(\wavg{x}{n}\right) + \frac{L+L_G}{2} \cdot \frac{\left(\alpha^{n}\right)^{2}}{I}\,\UBphi^2\,\norm{\Deltax^{n}}^{2} \nonumber\\
&\phantom{{}\leq{}} - \frac{\alpha^{n}}{I}\sum_{i=1}^{I}\phi_{i}^{n}\left(\tau_{i}\|\Deltaxi{i}^{n}\|^{2} + G^+(\widetilde{\mathbf{x}}_{(i)}^n) \!-\! G^+(\wavg{x}{n})\right)\nonumber\\
& \phantom{{}\leq{}}+\alpha^{n}c_{L}\UBphi\norm{\var{x}{n}}\norm{\Deltax^{n}} + \alpha^{n}\,\UBphi\,\norm{\var{y}{n}}\norm{\Deltax^{n}},\smallskip 
\end{align}

\noindent where in  (a) we used  \eqref{eq:x tilde descent}, Assumption \ref{assumption:f tilde}.1, and the bound (\ref{eq:def delta_up}) (along with some basic manipulations); in (b) we used $\wavg{y}{n} = \bar{\mathbf{g}}^n$ [cf. \eqref{eq: y weighted ave}]; (c) follows from the $L_i$-Lipschitz continuity of $\nabla f_i$, $L_G$-Lipschitz continuity of $\nabla G^-$, and the uniformly $\tilde{L}_i$-Lipschitz continuity of $\nabla \widetilde{f}_i (\mathbf{x};\bullet)$; and in (d) we used the inequality $\norm{\mathbf{x}}_{1}\leq \sqrt{n}\norm{\mathbf{x}}$, and the definition of  $c_L$ [cf.~\eqref{eq:deinitions_c_L}].
 
Invoking the convexity of $G^+$ and using  \eqref{eq: x weighted ave}, we can write\vspace{-0.2cm}
\begin{equation*}
G^+\left(\wavg{x}{n+1}\right) \leq \left(1-\alpha^{n}\right)G^+\left(\wavg{x}{n}\right) + \frac{\alpha^{n}}{I}\sum_{i=1}^{I}\phi_{i}^{n}G^+ (\widetilde{\mathbf{x}}_{(i)}^{n}),\vspace{-0.2cm}
\end{equation*}
which combined  with \eqref{eq:F-SONATA} yields\vspace{-0.2cm}
\begin{align}
&U\left(\wavg{x}{n+1}\right) \nonumber\\
&\leq U\left(\wavg{x}{n}\right) - \frac{\alpha^{n}}{I}\,\LBphi\,c_{\tau}\,\norm{\Deltax^{n}}^{2} + \frac{L+L_G}{2}\cdot\frac{\left(\alpha^{n}\right)^{2}}{I}\,\UBphi^2\,\norm{\Deltax^{n}}^{2}\nonumber\\
&\phantom{{}\leq{}}+\alpha^{n}\,c_{L}\UBphi\,\norm{\var{x}{n}}_2\norm{\Deltax^{n}} + \alpha^{n}\,\UBphi\,\norm{\var{y}{n}}\norm{\Deltax^{n}}\nonumber\\
& \leq U\left(\wavg{x}{n}\right)- \frac{\alpha^{n}}{I}\,\LBphi\,c_{\tau}\,\norm{\Deltax^{n}}^{2} + \frac{L+L_G}{2} \cdot \frac{\left(\alpha^{n}\right)^{2}}{I}\,\UBphi^2\,\norm{\Deltax^{n}}^{2}\nonumber\\
&\phantom{{}\leq{}}+\frac{\UBphi}{2}\left( c_{L}\epsilon_{x} + \epsilon_{y}\right)\left(\alpha^{n}\right)^{2}\norm{\Deltax^{n}}^{2} + \frac{\UBphi}{2}\,c_{L}\,\epsilon_{x}^{-1}\norm{\var{x}{n}}^{2} + \frac{\UBphi}{2}\epsilon_{y}^{-1}\norm{\var{y}{n}}^{2},\nonumber 
\end{align}
where the last inequality follows from the Young's inequality, with $\epsilon_{x}>0$ and $\epsilon_{y}>0$.
Applying the above inequality recursively for $\bar{B}$ steps, with $\bar{B}$ defined in Lemma~\ref{cor:averaging matrix column stochastic},   yields\vspace{-0.3cm}
\begin{multline}\label{eq:U-B steps column stochastic}
U\left(\wavg{x}{n+\bar{B}}\right) 
\leq U\left(\wavg{x}{n}\right) -  \frac{c_\tau\,\LBphi}{I} \sum_{t=0}^{\bar{B}-1}\alpha^{n+t}\norm{\Deltax^{n+t}}^2 \\
+ \frac{\UBphi}{2}\left(\frac{L + L_G}{I}\cdot\UBphi+c_{L}\,\epsilon_{x} + \epsilon_{y}\right)\eDeltax^n+  \frac{\UBphi}{2}\left(c_L\epsilon_{x}^{-1}\exorth^n + \epsilon_{y}^{-1}\eyorth^n \right).
\end{multline}
Summing up \eqref{eq:U-B steps column stochastic} over $\bar{B}$ consecutive iterations leads to the desired result.\qed
\end{proof}
 
  Since, for sufficiently small $\alpha^n$, the negative term on the RHS of \eqref{eq:V-U part column stochastic}
 dominates the positive third term, to prove convergence of $\{U(\wavg{x}{n+\bar{B}+k}))\}_{n\in \mathbb{N}_+}$,  descent-based techniques used in the  literature of distributed gradient-based algorithms would call for the summability of the consensus error  $\{\exorth^n\}_{n\in \mathbb{N}_+}$ and tracking error $\{\eyorth^n\}_{n\in \mathbb{N}_+}$ sequences. However, under constant step-size or   unbounded (sub-)gradient of $U$, it seems not possible to infer such a result by just studying the dynamics of   $\{\exorth^n\}_{n\in \mathbb{N}_+}$ and   $\{\eyorth^n\}_{n\in \mathbb{N}_+}$ \emph{independently} from the optimization error $\Deltax^{n}$. Therefore, exploring the interplay  between these quantities, 
 we put forth a new analysis, based on the following steps: \vspace{-0.1cm}
 \begin{itemize}
\item[$-$]\textbf{Step~1}: We first bound     $\exorth^n$ and    $\eyorth^n$ [specifically, term iv in (\ref{eq:V-U part column stochastic})] as a function of $\eDeltax^n$ (and thus $\Deltax^{n}$)--see   Proposition~\ref{lem:V-consensus err column stochastic} [cf.~Sec.~\ref{sec:consensus err bound}]. Using  Proposition~\ref{lem:V-consensus err column stochastic}, we then prove that  $\{\exorth^n\}_{n\in \mathbb{N}_+}$ and   $\{\eyorth^n\}_{n\in \mathbb{N}_+}$ are summable, if  $\{\eDeltax^n\}_{n\in \mathbb{N}_+}$  is so$-$see Proposition~\ref{prop_bound_errors} [cf.~Sec.~\ref{sec:bound_errors}].\smallskip

\item[$-$]\textbf{Step~2}: Using Propositions~\ref{lem:V-consensus err column stochastic} and \ref{prop_bound_errors}, we build a new Lyapunov function [cf.~Sec.~\ref{sec:proof th1}], whose convergence implies the summability of $\{\eDeltax^n\}_{n\in \mathbb{N}_+}$  and thus convergence of all error sequences [cf.~Sec.~\ref{sec:final_part_of_the_proof}], as stated in Theorem~\ref{thm:convergence}.\vspace{-0.3cm}  \end{itemize}


\subsection{Interplay among $\exorth^n$,  $\eyorth^n$ and  $\eDeltax^n$}\vspace{-0.1cm}
\subsubsection{Bounding $\exorth^n$ and $\eyorth^n$ }\label{sec:consensus err bound}\vspace{-0.1cm}
We first study the dynamics of  $\|\var{x}n\|$ and $\|\var{y}n\|$. 

\begin{lem}\label{lem:consensus err contraction column stochastic}
The disagreements $\|\var{x}n\|$ and $\|\var{y}n\|$  satisfy  \vspace{-0.2cm}
\begin{equation}\label{eq:consensus err contraction x column stochastic}
\|\var{x}{n+\bar{B}}\| \leq \rhoopt\norm{\var{x}{n}} + c\sum_{t=0}^{\bar{B}-1}\alpha^{n+t}\norm{\Delta \mathbf{x}^{n+t}},\vspace{-0.2cm}
\end{equation}
\begin{equation}\label{eq:consensus err contraction y column stochastic}
\|\var{y}{n+\bar{B}}\|  \leq \rhoopt\norm{\var{y}{n}} + c \,{L_{\rm{mx}}}\,\LBphi^{-1}\,\sum_{t=0}^{\bar{B}-1}\left(2 \,\|\var{x}{n+t}\| + \alpha^{n+t} \|\Delta \mathbf{x}^{n+t}\|\right),
\end{equation}
 where $c= I\sqrt{2I}$. Furthermore, if all $\mathbf{A}^{n}$ are double stochastic, then \eqref{eq:consensus err contraction x column stochastic} and \eqref{eq:consensus err contraction y column stochastic} hold with   $\bar{B}=B$, $\rhoopt = \sqrt{1-\kappa/(2I^{2})}$  and $c = 1$.\vspace{-0.1cm}
\end{lem}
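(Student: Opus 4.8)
The plan is to recognize that both the $\mathbf{x}$- and $\mathbf{y}$-updates of SONATA, written in the compact form \eqref{eq: update vec x}--\eqref{eq: update vec y}, are instances of the perturbed condensed push-sum recursion \eqref{eq:consensus_mat_form} driven by the \emph{same} $\boldsymbol{\phi}$-dynamics and weight matrices $\widehat{\mathbf{W}}^n$; hence the error-decay estimate \eqref{error-decay-perturbed-consensus} of Proposition~\ref{prop:error_decay} applies verbatim to each of $\var{x}{n}$ and $\var{y}{n}$ once the driving perturbation is identified and its norm bounded. I would apply that estimate over a window of length $\bar{B}$, i.e. with $k=\bar{B}$: since $\lambda^{\bar{B}}=\min\{\sqrt{2}I,\rhoopt\}=\rhoopt$ (because $\rhoopt<1<\sqrt{2}I$) and $\lambda^t\leq\sqrt{2}I$ for every $t$, one window produces a contraction by $\rhoopt$ plus an accumulation of the perturbation norms weighted by at most $\sqrt{2}I$. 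The single ingredient I would use repeatedly is that $\widehat{\mathbf{W}}^n=\mathbf{W}^n\otimes\mathbf{I}$ with $\mathbf{W}^n$ row stochastic [cf.~\eqref{eq:def w}], so that $\norm{\widehat{\mathbf{W}}^n}=\norm{\mathbf{W}^n}_2\leq\norm{\mathbf{W}^n}_F\leq\sqrt{I}$ (using $w_{ij}^2\le w_{ij}$ for entries in $[0,1]$).

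For \eqref{eq:consensus err contraction x column stochastic} I would rewrite \eqref{eq: update vec x} as $\mathbf{x}^{n+1}=\widehat{\mathbf{W}}^n\mathbf{x}^n+\error^{n+1}$ with $\error^{n+1}=\widehat{\mathbf{W}}^n\alpha^n\Delta\mathbf{x}^n$, so $\norm{\error^{n+1}}\leq\sqrt{I}\,\alpha^n\norm{\Delta\mathbf{x}^n}$. Feeding this into \eqref{error-decay-perturbed-consensus} with $k=\bar{B}$, and reindexing the $\bar{B}$ perturbation terms $\error^{n+\bar{B}-t}=\widehat{\mathbf{W}}^{n+\bar{B}-t-1}\alpha^{n+\bar{B}-t-1}\Delta\mathbf{x}^{n+\bar{B}-t-1}$, gives $\norm{\var{x}{n+\bar{B}}}\leq\rhoopt\norm{\var{x}{n}}+\sqrt{2}I\cdot\sqrt{I}\sum_{t=0}^{\bar{B}-1}\alpha^{n+t}\norm{\Delta\mathbf{x}^{n+t}}$, which is the claim with $c=\sqrt{2}\,I^{3/2}=I\sqrt{2I}$.

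For \eqref{eq:consensus err contraction y column stochastic} the update \eqref{eq: update vec y} is already in the form $\mathbf{y}^{n+1}=\widehat{\mathbf{W}}^n\mathbf{y}^n+\error^{n+1}$ with $\error^{n+1}=(\DiagPhi{n+1})^{-1}(\mathbf{g}^{n+1}-\mathbf{g}^n)$. Using $\phi_i^{n+1}\geq\LBphi$ from Proposition~\ref{prop:error_decay}(i) and the $L_i$-Lipschitz continuity of $\nabla f_i$ (Assumption~\ref{assumption:P}, with $L_i\leq L_{\rm{mx}}$) I would bound $\norm{\error^{n+1}}\leq\LBphi^{-1}L_{\rm{mx}}\norm{\mathbf{x}^{n+1}-\mathbf{x}^n}$. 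The crux is to control the increment $\norm{\mathbf{x}^{n+1}-\mathbf{x}^n}$ through the consensus disagreement rather than through $\mathbf{x}^n$ itself: writing $\mathbf{x}^{n+1}-\mathbf{x}^n=(\widehat{\mathbf{W}}^n-\mathbf{I})\mathbf{x}^n+\widehat{\mathbf{W}}^n\alpha^n\Delta\mathbf{x}^n$ and exploiting row-stochasticity of $\mathbf{W}^n$, which gives $(\widehat{\mathbf{W}}^n-\mathbf{I})(\mathbf{1}\otimes\mathbf{v})=\mathbf{0}$ for every $\mathbf{v}$ and hence $(\widehat{\mathbf{W}}^n-\mathbf{I})\mathbf{x}^n=(\widehat{\mathbf{W}}^n-\mathbf{I})\var{x}{n}$, I obtain $\norm{\mathbf{x}^{n+1}-\mathbf{x}^n}\leq(\sqrt{I}+1)\norm{\var{x}{n}}+\sqrt{I}\,\alpha^n\norm{\Delta\mathbf{x}^n}$. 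Inserting this into \eqref{error-decay-perturbed-consensus} over the window $\bar{B}$, bounding $\lambda^t\leq\sqrt{2}I$, and using $\sqrt{I}+1\leq2\sqrt{I}$ (valid since $I\geq1$) collapses both coefficients into $c\,L_{\rm{mx}}\LBphi^{-1}$ with the same $c=I\sqrt{2I}$, yielding \eqref{eq:consensus err contraction y column stochastic}.

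Finally, the double-stochastic case follows from the same two derivations after substituting the tightened decay of Remark~\ref{remark_consensus}: there $\boldsymbol{\phi}^n\equiv\mathbf{1}$ (so $\LBphi=\UBphi=1$), $\lambda^t=\min\{1,\rho^{\lfloor t/B\rfloor}\}$ with $\rho=\sqrt{1-\kappa/(2I^2)}$, and $\widehat{\mathbf{W}}^n=\mathbf{A}^n\otimes\mathbf{I}$ is doubly stochastic hence $\norm{\widehat{\mathbf{W}}^n}=1$; this gives $\bar{B}=B$, $\rhoopt=\sqrt{1-\kappa/(2I^2)}$, $c=1$, and the factor $2$ in \eqref{eq:consensus err contraction y column stochastic} now appears literally as $\norm{\widehat{\mathbf{W}}^n}+\norm{\mathbf{I}}=2$. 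I expect the $\mathbf{y}$-bound to be the main obstacle: in contrast to the bounded-(sub)gradient analyses, $\norm{\mathbf{x}^{n+1}-\mathbf{x}^n}$ cannot be dominated by a vanishing step-size times a fixed constant, and the argument rests on the row-stochastic invariance to trade the (not necessarily small) iterate $\mathbf{x}^n$ for the consensus disagreement $\var{x}{n}$, which is precisely what couples the two disagreement recursions and lets the subsequent Lyapunov analysis close.
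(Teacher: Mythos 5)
Your proposal is correct and follows essentially the same route as the paper's proof: both bounds are obtained by viewing the $\mathbf{x}$- and $\mathbf{y}$-updates as instances of the perturbed condensed push-sum recursion, applying the error-decay estimate of Proposition~\ref{prop:error_decay} over a window of length $\bar{B}$, bounding the perturbations via $\|\widehat{\mathbf{W}}^n\|\leq\sqrt{I}$, the Lipschitz continuity of $\nabla f_i$, the lower bound $\phi_i^{n}\geq\LBphi$, and the row-stochasticity identity $(\widehat{\mathbf{W}}^n-\mathbf{I})\mathbf{x}^n=(\widehat{\mathbf{W}}^n-\mathbf{I})\var{x}{n}$. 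Your identification of the driving perturbation in the $x$-update as $\widehat{\mathbf{W}}^n\alpha^n\Delta\mathbf{x}^n$ is in fact more accurate than the paper's own statement of it.
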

\begin{proof}
See Appendix~\ref{app:lem 5 proof}.\hfill$\square$
\end{proof}
 
Using Lemma~\ref{lem:consensus err contraction column stochastic}, we   now  study  the dynamics of the    weighted sum of the disagreements $\|\var{x}n\|$ and $\|\var{y}n\|$   over $\bar{B}$ consecutive iterations.

\begin{prop}\label{lem:V-consensus err column stochastic}
	The   sequences  $\{\|\var{x}{n}\|^2\}_{n\in\mathbb{N}_+}$ and $\{\|\var{y}{n}\|^2\}_{n\in\mathbb{N}_+}$ satisfy 
	\begin{multline}
	\wexorth{n+\bar{B}}
	\leq \wexorth{n} \\
	- \underbrace{\left(1- \left(\epsilon^{-1}+\bar{B}\right)\frac{2\bar{B}c^{2}}{1-\tilde{\rho}} \left(\alpha_{\rm{mx}}^n\right)^2\right)}_{\mu^n}\sum_{k=0}^{\bar{B}-1}\exorth^{n+k} 
	+ \underbrace{\left(\epsilon^{-1}+\bar{B}\right)\frac{2\bar{B}c^{2}}{1-\tilde{\rho}}}_{\cDeltax}\, \sum_{k=0}^{\bar{B}-1}\eDeltax^{n+k},	\label{eq:V-x column stochastic} 
	\end{multline} 
	\begin{multline}\label{eq:V-y column stochastic}
\weyorth{n+\bar{B}} 
	\leq \weyorth{n} \\- \sum_{k=0}^{\bar{B}-1}\eyorth^{n+k} + \underbrace{\left(\epsilon^{-1}+\bar{B}\right)\frac{2\bar{B}c^{2}}{1-\tilde{\rho}} \,{L_{\rm{mx}}^2}\,\LBphi^{-2}}_{\corth}\left(2 +\alpha_{\rm{mx}}^{n}\right)^2 \sum_{k=0}^{\bar{B}-1}\exorth^{n+k}\\
+ \underbrace{\left(\epsilon^{-1}+\bar{B}\right)\frac{2\bar{B}c^{2}}{1-\tilde{\rho}} \,{L_{\rm{mx}}^2}\, \LBphi^{-2}}_{\corth}\sum_{k=0}^{\bar{B}-1}\eDeltax^{n+k},	
	\end{multline}
 where
$\alpha_{\rm{mx}}^n   \triangleq \max\limits_{k=0,\ldots,2\bar{B}-2}{\alpha^{n+k}}$;  
$\tilde{\rho}\triangleq \rhoopt^2\left(1+\bar{B}\epsilon\right)$; and  $\epsilon >0$ is any constant such that $\tilde{\rho} <1$.\vspace{-0.2cm}
\end{prop}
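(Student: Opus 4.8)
The plan is to derive both \eqref{eq:V-x column stochastic} and \eqref{eq:V-y column stochastic} from the single-block bounds of Lemma~\ref{lem:consensus err contraction column stochastic} by a two-stage argument: first turn each $\ell_1$-type, one-step-over-$\bar{B}$ contraction into a \emph{squared, per-index} contraction, and then assemble these across a window of length $\bar{B}$ using the tailored weights that appear in $\wexorth{\cdot}$ and $\weyorth{\cdot}$. For Stage~1 I would square \eqref{eq:consensus err contraction x column stochastic}, written at base index $n+k$, apply Young's inequality to the cross term with the free parameter chosen so that $\rhoopt^2(1+\bar{B}\epsilon)=\tilde{\rho}$, and then use Cauchy--Schwarz $\big(\cumsum(\cdot)\big)^2\le\bar{B}\cumsum(\cdot)^2$ on the optimization input. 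Since that input is expressed through $\Delta\mathbf{x}^{n+t}$, I would substitute \eqref{eq:Delta_x_eq}, $\Delta\mathbf{x}^{n+t}=\Deltax^{n+t}-\var{x}{n+t}$, so that after squaring the displacement separates into the genuine optimization energy $\eDeltax^{n+k}$ and an \emph{extra} consensus term bounded by $(\alpha_{\rm{mx}}^n)^2\exorth^{n+k}$. This gives a per-index contraction of the form $\norm{\var{x}{n+\bar{B}+k}}^2\le\tilde{\rho}\,\norm{\var{x}{n+k}}^2+C\big(\eDeltax^{n+k}+(\alpha_{\rm{mx}}^n)^2\exorth^{n+k}\big)$ with $C$ of order $(\epsilon^{-1}+\bar{B})c^2$; this is exactly where the hypothesis $\tilde{\rho}<1$ is used, as it keeps the weights below positive.

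For Stage~2 I would exploit that the weights $w_k:=\big(k+1+(\bar{B}-k-1)\tilde{\rho}\big)/(1-\tilde{\rho})$ are engineered so that $(1-\tilde{\rho})w_k=(k+1)+(\bar{B}-k-1)\tilde{\rho}$; hence $w_k$ increases by $1$ in $k$, attains its maximum $w_{\bar{B}-1}=\bar{B}/(1-\tilde{\rho})$, and satisfies $w_0=\tilde{\rho}\,w_{\bar{B}-1}+1$. I would compute $\wexorth{n+\bar{B}}-\wexorth{n}=\sum_{k=0}^{\bar{B}-1}w_k\big(\norm{\var{x}{n+\bar{B}+k}}^2-\norm{\var{x}{n+k}}^2\big)$ and insert the per-index contraction, producing $-(1-\tilde{\rho})\sum_k w_k\norm{\var{x}{n+k}}^2$ together with weighted energy terms.

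The heart of the argument is bringing in $\cumsum\exorth^{n+k}$, which I would expand as a triangular double sum $\sum_{i=0}^{2\bar{B}-2}\gamma_i\norm{\var{x}{n+i}}^2$ with $\gamma_i=i+1$ on the \emph{current window} ($i\le\bar{B}-1$) and $\gamma_i=2\bar{B}-1-i$ on the \emph{future window} ($i\ge\bar{B}$). Re-expanding the future-window terms $\norm{\var{x}{n+\bar{B}+j}}^2$ by the same per-index contraction, the weight identity forces every current-window coefficient to collapse: the residue $-\tilde{\rho}(\bar{B}-1-k)$ left by $-(1-\tilde{\rho})w_k+(k+1)$ is cancelled exactly by the $+\tilde{\rho}(\bar{B}-1-k)$ produced by the re-expanded future block. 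Thus all consensus-error terms annihilate, save that the $(\alpha_{\rm{mx}}^n)^2$-weighted consensus contribution from Stage~1 survives and erodes the coefficient of $\cumsum\exorth^{n+k}$ from $1$ down to $\mu^n=1-\cDeltax(\alpha_{\rm{mx}}^n)^2$; bounding the residual weights by $\bar{B}/(1-\tilde{\rho})$ and the future coefficients by $\bar{B}$ collects the optimization energy into $\cDeltax\cumsum\eDeltax^{n+k}$, yielding \eqref{eq:V-x column stochastic}.

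Inequality \eqref{eq:V-y column stochastic} follows from the identical template applied to \eqref{eq:consensus err contraction y column stochastic}; the only new feature is that the block input now mixes the two errors through $2\,\norm{\var{x}{n+t}}+\alpha^{n+t}\norm{\Delta\mathbf{x}^{n+t}}$. Squaring this (Young, Cauchy--Schwarz, and $\alpha^{n+t}\le\alpha_{\rm{mx}}^n$) splits it into a consensus part carrying the factor $(2+\alpha_{\rm{mx}}^n)^2$ and an optimization part, which after the weighted assembly become $\corth(2+\alpha_{\rm{mx}}^n)^2\cumsum\exorth^{n+k}$ and $\corth\cumsum\eDeltax^{n+k}$; here $\norm{\var{y}{\cdot}}$ never re-enters its own input, so the self-term appears cleanly as $-\cumsum\eyorth^{n+k}$. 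I expect the main obstacle to be exactly the bookkeeping in the crux step --- the current/future split of $\cumsum\exorth^{n+k}$, the re-expansion of the future block, and the verification of the exact cancellation dictated by the weight identity, together with tracking the $(\alpha_{\rm{mx}}^n)^2$ consensus remainder that produces $\mu^n$ --- whereas the squaring and the constant arithmetic are routine.
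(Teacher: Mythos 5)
Your proposal is correct and follows essentially the same route as the paper's proof: square the $\bar{B}$-step contraction from Lemma~\ref{lem:consensus err contraction column stochastic}, apply Young's and Jensen's inequalities to produce $\tilde{\rho}$ and the $(\epsilon^{-1}+\bar{B})c^2$ constants, split $\Delta\mathbf{x}^{n+t}=\Deltax^{n+t}-\var{x}{n+t}$, and then assemble the window sum $\cumsum\exorth^{n+k}$ via its triangular decomposition so that the weighted terms telescope into the coefficients $\frac{k+1+(\bar{B}-k-1)\tilde{\rho}}{1-\tilde{\rho}}$. Your Stage~2 phrasing (differencing the weighted sums and verifying the cancellation from the weight identity) is just an equivalent reorganization of the paper's step of multiplying the squared contraction by $1/(1-\tilde{\rho})$ and $\tilde{\rho}/(1-\tilde{\rho})$ and adding the two resulting bounds.
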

\begin{proof}
 	We prove only \eqref{eq:V-x column stochastic};  \eqref{eq:V-y column stochastic} can be proved using similar steps. 
 	 Squaring both sides of the inequality \eqref{eq:consensus err contraction x column stochastic} leads to
 	 \begin{align}\label{eq: consensus err x-B column stochastic}
 	 	\begin{split}
 	 		 			 &\norm{\var{x}{n+\bar{B}}}^2 \\
 	 		& \leq  \rhoopt^2\norm{\var{x}{n}}^2 + \left(c\cdot\sum_{t=0}^{\bar{B}-1} \alpha^{n+t}\norm{\Delta\mathbf{x}^{n+t}}\right)^2 + 2   \,\sum_{t=0}^{\bar{B}-1}c \,\rhoopt\,\alpha^{n+t}\,\norm{\var{x}{n}} \norm{\Delta\mathbf{x}^{n+t}}\\
 	 		& \overset{(a)}{\leq} 
 	 		{{\rhoopt^2\left(1+\bar{B}\epsilon\right)}}\norm{\var{x}n}^2 + \sum_{t=0}^{\bar{B}-1} \left(\frac{1}{\epsilon}+\bar{B}\right)c^{2}\,(\alpha^{n+t})^2\,\norm{\Delta \mathbf{x}^{n+t}}^2\\
 	 		& \overset{(b)}{\leq} \tilde{\rho}\norm{\var{x}n}^2 + \sum_{t=0}^{\bar{B}-1} \left(\frac{1}{\epsilon}+\bar{B}\right)\,2\,c^{2}\,(\alpha^{n+t})^2\left(\norm{\Deltax^{n+t}}^2 + \norm{\var{x}{n+t}}^2\right),
 	 	\end{split}
 	 \end{align}
	where (a) follows from the Young's inequality, with $\epsilon>0$, and 
	the Jensen's inequality; and in (b) we used (\ref{eq:Delta_x_eq}). 
 	Note that, since $\rhoopt <1$,   $\tilde{\rho}=\rhoopt^2\left(1+\bar{B}\epsilon\right)<1$, for all  $\epsilon \in \left(0,\left(1-\rhoopt^2\right)/(\rhoopt^2\bar{B})\right)$.

 {Denote $\tilde{\alpha}^n_{\rm{mx}} \triangleq \max\limits_{k=0,\ldots,\bar{B}-1}{\alpha^{n+k}}$}. Multiplying   \eqref{eq: consensus err x-B column stochastic}   by ${1}/({1-\tilde{\rho}})$ [resp.~${\tilde{\rho}}/({1-\tilde{\rho}})$],  adding $\|\var{x}n\|^2$ (resp.~$\|\var{x}{n+\bar{B}}\|^2$) to both sides, and using the definitions of $\eDeltax^n$ and $\exorth^n$ [cf.~(\ref{eq: energy_consensus})], yield \vspace{-0.2cm}
	\begin{equation}\label{eq:err x backward column stochastic}
	\begin{aligned}
	&\frac{1}{1-\tilde{\rho}}\,\|\var{x}{n+\bar{B}}\|^2 + \|\var{x}n\|^2\\
	&\leq  \frac{\tilde{\rho}}{1-\tilde{\rho}}\norm{\var{x}{n}}^2 + \norm{\var{x}n}^2 + \frac{2\,c^{2}}{1-\tilde{\rho}}\left(\frac{1}{\epsilon} + \bar{B}\right)\left(\eDeltax^n + \left(\tilde{\alpha}_{\rm{mx}}^n\right)^2\exorth^n\right)\\
	&=  \frac{1}{1-\tilde{\rho}}\norm{\var{x}n}^2 + \frac{2\,c^{2}}{1-\tilde{\rho}}\left(\frac{1}{\epsilon} + \bar{B}\right) \left(\eDeltax^n + \left(\tilde{\alpha}_{\rm{mx}}^n\right)^2\exorth^n\right)
	\end{aligned}
	\end{equation}
	and	\begin{equation}\label{eq:err x forward column stochastic}
	\begin{aligned}
	&\frac{\tilde{\rho}}{1-\tilde{\rho}}\norm{\var{x}{n+\bar{B}}}^2 + \norm{\var{x}{n+\bar{B}}}^2 = \frac{1}{1-\tilde{\rho}}\,\norm{\var{x}{n+\bar{B}}}^2\\
	&\leq  \frac{\tilde{\rho}}{1-\tilde{\rho}}\norm{\var{x}{n}}^2 + \frac{2\,c^2}{1-\tilde{\rho}}\left(\frac{1}{\epsilon} + \bar{B}\right)\left(\eDeltax^n + \left(\tilde{\alpha}_{\rm{mx}}^n\right)^2\exorth^n\right),
	\end{aligned}
	\end{equation}
respectively. 	

	We write now  $\sum_{k=0}^{\bar{B}-1}\exorth^{n+k}$  as
	\begin{equation}\label{eq:cumsum err x}
	\hspace{-0.2cm}\begin{aligned}
	 \sum_{k=0}^{\bar{B}-1}\exorth^{n+k}  =& \left(\norm{\var{x}{n+2\bar{B}-2}}^2 + 2\norm{\var{x}{n+2\bar{B}-3}}^2 + \cdots + (\bar{B}-1)\norm{\var{x}{n+\bar{B}}}^2\right)\\
	& + \left(\bar{B}\norm{\var{x}{n+\bar{B}-1}}^2 + (\bar{B}-1)\norm{\var{x}{n+\bar{B}-2}}^2 + \cdots + \norm{\var{x}{n}}^2\right).
	\end{aligned}
	\end{equation}
	Using \eqref{eq:err x backward column stochastic} and \eqref{eq:err x forward column stochastic} on the two terms in  \eqref{eq:cumsum err x}, we obtain the following bounds:   
	\begin{align} 
	&\frac{\tilde{\rho}}{1-\tilde{\rho}}\left(\norm{\var{x}{n+2\bar{B}-2}}^2 + 2\norm{\var{x}{n+2\bar{B}-3}}^2 + \cdots + (\bar{B}-1)\norm{\var{x}{n+\bar{B}}}^2\right)\nonumber\\
	& \hphantom{{}\leq{}}+ \left(\norm{\var{x}{n+2\bar{B}-2}}^2 + 2\norm{\var{x}{n+2\bar{B}-3}}^2 + \cdots + (\bar{B}-1)\norm{\var{x}{n+\bar{B}}}^2\right)\nonumber\\
	 	& \leq  \frac{\tilde{\rho}}{1-\tilde{\rho}}\left(\norm{\var{x}{n+\bar{B}-2}}^2 + 2\norm{\var{x}{n+\bar{B}-3}}^2 + \cdots + (\bar{B}-1)\norm{\var{x}{n}}^2\right)\label{eq:bound_E_first_row}\\
	& \hphantom{{}\leq{}}+\frac{2\,c^{2}}{1-\tilde{\rho}}\left(\frac{1}{\epsilon}+\bar{B}\right)\left[\left(\eDeltax^{n+\bar{B}-2} +\left(\tilde{\alpha}_{\rm{mx}}^{n+\bar{B}-2}\right)^2\exorth^{n+\bar{B}-2}\right)+\cdots \right.\nonumber\\
	&\left.\hspace{4cm} + \left(\bar{B} - 1\right)\left(\eDeltax^{n} + \left(\tilde{\alpha}_{\rm{mx}}^n\right)^2\exorth^{n}\right)\right],\nonumber
	\end{align}
	and \vspace{-0.2cm}
\begin{align}
	&   \frac{1}{1-\tilde{\rho}}\left(\bar{B}\norm{\var{x}{n+2\bar{B}-1}}^2 + (\bar{B}-1)\norm{\var{x}{n+2\bar{B}-2}}^2 + \cdots + \norm{\var{x}{n+\bar{B}}}^2\right)\nonumber\\
	& \hphantom{{}\leq{}}+ \left(\bar{B}\norm{\var{x}{n+\bar{B}-1}}^2 + (\bar{B}-1)\norm{\var{x}{n+\bar{B}-2}}^2 + \cdots + \norm{\var{x}{n}}^2\right)\nonumber\\
	& \leq {  \frac{1}{1-\tilde{\rho}}\left(\bar{B}\norm{\var{x}{n+\bar{B}-1}}^2 + (\bar{B}-1)\norm{\var{x}{n+\bar{B}-2}}^2 + \cdots + \norm{\var{x}{n}}^2\right)\label{eq:bound_E_second_row}}\\
	& \hphantom{{}\leq{}} { +\frac{2\,c^{2}}{1-\tilde{\rho}}\left(\frac{1}{\epsilon}+\bar{B}\right)\left[\bar{B}\left(\eDeltax^{n+\bar{B}-1} +\left(\tilde{\alpha}_{\rm{mx}}^{n+\bar{B}-1}\right)^2\exorth^{n+\bar{B}-1}\right)+\cdots \right.}\nonumber\\
	&{ \left.\hspace{4cm} +\left(\eDeltax^{n} + \left(\tilde{\alpha}_{\rm{mx}}^n\right)^2\exorth^{n}\right)\right].}\nonumber
	\end{align}

	Summing (\ref{eq:bound_E_first_row}) and (\ref{eq:bound_E_second_row}) and rearranging terms while using (\ref{eq:cumsum err x}), it is not difficult to check that 
	\begin{equation}
	\begin{aligned}
	&\hphantom{{}\leq{}}\sum_{k=0}^{\bar{B}-1}\frac{k+1+(\bar{B}-k-1)\tilde{\rho}}{1-\tilde{\rho}}\norm{\var{x}{n+\bar{B}+k}}^2 + \sum_{k=0}^{\bar{B}-1}\exorth^{n+k}\\
	&\leq \sum_{k=0}^{\bar{B}-1}\frac{k+1+(\bar{B}-k-1)\tilde{\rho}}{1-\tilde{\rho}}\norm{\var{x}{n+k}}^2\\
	& \hphantom{{}\leq{}}+ \left(\frac{1}{\epsilon}+\bar{B}\right)\frac{2\bar{B}c^{2}}{1-\tilde{\rho}}\sum_{k=0}^{\bar{B}-1}\eDeltax^{n+k}+ \left(\frac{1}{\epsilon}+\bar{B}\right)\frac{2\bar{B}c^{2}}{1-\tilde{\rho}}\left(\alpha_{\rm{mx}}^n\right)^2\sum_{k=0}^{\bar{B}-1}\exorth^{n+k},
	\end{aligned}
	\end{equation}
which leads to the desired result \eqref{eq:V-x column stochastic}. \qed
\end{proof}

We use now Proposition~\ref{lem:V-consensus err column stochastic} in conjunction with  {Lemma~\ref{B-martingale}} to prove the  summability of $\{\exorth^n\}_{n\in \mathbb{N}_+}$ and  $\{\eyorth^n\}_{n\in \mathbb{N}_+}$, under that of $\{\eDeltax^{n}\}_{n\in \mathbb{N}_+}$.  Let 
 {\begin{equation}\label{eq:alpha upperbound}
\alpha_{\rm{mx}} \triangleq \sigma \cdot \sqrt{\frac{1-\tilde{\rho}}{2\bar{B}\left(\bar{B}+\epsilon^{-1}\right)c^{2}}}\end{equation}}
with $\sigma \in (0,1)$. This implies
  [recall the definition of $\mu^n$ in \eqref{eq:V-x column stochastic}]
\begin{equation}\label{mu_lower}
\mu^n\geq \mu_{\min} \triangleq \left(1- \left(\epsilon^{-1}+\bar{B}\right)\,\frac{2\bar{B}c^{2}}{1-\tilde{\rho}} \,\alpha_{\rm{mx}}^2\right) = 1-\sigma^2>0,\quad \forall \alpha^n_{\rm{mx}}\leq \alpha_{\rm{mx}}.
\end{equation}
\begin{prop}\label{prop:summable consensus err column stochastic}
Suppose that   i) $\sum_{n=0}^{\infty}(\alpha^{n})^2\|\Deltax^{n}\|^2 <\infty$; and ii) $\alpha^{n} \leq \alpha_{\rm{mx}}$, for all  but finite $n\in\mathbb{N}_+$.
Then, the consensus and tracking disagreements 
satisfy $\sum_{n=0}^{\infty}\|\var{x}n\|^2<\infty$ and $\sum_{n=0}^{\infty}\|\var{y}n\|^2<\infty$, respectively.  \end{prop}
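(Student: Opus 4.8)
The plan is to apply the martingale-like Lemma~\ref{B-martingale} twice, exploiting the two coupled recursions \eqref{eq:V-x column stochastic} and \eqref{eq:V-y column stochastic} of Proposition~\ref{lem:V-consensus err column stochastic}, and to resolve the consensus disagreement $\var{x}{n}$ \emph{before} the tracking disagreement $\var{y}{n}$, since the latter is forced by the former. First I would record two elementary consequences of the hypotheses. By hypothesis (i) and the definition $\eDeltax^n=\cumsum(\alpha^{n+t})^2\norm{\Deltax^{n+t}}^2$, each index $m$ contributes to at most $\bar{B}$ of the windows, so $\sum_{n=0}^{\infty}\eDeltax^n\le\bar{B}\sum_{m=0}^{\infty}(\alpha^m)^2\norm{\Deltax^m}^2<\infty$; hence every forcing term built from $\{\eDeltax^n\}$ is summable. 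By hypothesis (ii) there is an $N_0$ with $\alpha^n\le\alpha_{\rm{mx}}$ for all $n\ge N_0$, so $\alpha_{\rm{mx}}^n\le\alpha_{\rm{mx}}$ and therefore, by \eqref{mu_lower}, $\mu^n\ge\mu_{\min}=1-\sigma^2>0$ for all $n\ge N_0$. This is precisely where the step-size bound enters: it guarantees a strictly positive coefficient in front of the descent term of \eqref{eq:V-x column stochastic}.

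Next I would close the loop on the consensus error. For $n\ge N_0$, bounding $-\mu^n\sum_{k=0}^{\bar{B}-1}\exorth^{n+k}\le-\mu_{\min}\sum_{k=0}^{\bar{B}-1}\exorth^{n+k}$ in \eqref{eq:V-x column stochastic} puts it into the template of Lemma~\ref{B-martingale}, with nonnegative Lyapunov window $\wexorth{n}$ (nonnegative because $\tilde{\rho}\in[0,1)$ makes every weight positive), nonnegative decrease sequence $X^n=\mu_{\min}\exorth^n$, and forcing $Z^n=\cDeltax\,\eDeltax^n$ (summable by the first observation). Since $\wexorth{n}\ge0$, the divergent alternative $\wexorth{n}\to-\infty$ is excluded, so the Lemma yields $\sum_n X^n<\infty$, i.e.\ $\sum_{n=0}^{\infty}\exorth^n<\infty$. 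As $\exorth^n=\cumsum\norm{\var{x}{n+t}}^2\ge\norm{\var{x}{n}}^2$, this gives $\sum_{n=0}^{\infty}\norm{\var{x}{n}}^2<\infty$, the first claim.

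Finally I would feed this into \eqref{eq:V-y column stochastic}. Here the Lyapunov window is $\weyorth{n}\ge0$, the decrease sequence has unit coefficient, so $X^n=\eyorth^n\ge0$, and the forcing collects $\corth(2+\alpha_{\rm{mx}}^n)^2\sum_{k=0}^{\bar{B}-1}\exorth^{n+k}$ together with $\corth\sum_{k=0}^{\bar{B}-1}\eDeltax^{n+k}$. Both are summable: the factor $(2+\alpha_{\rm{mx}}^n)^2$ is bounded (by $(2+\alpha_{\rm{mx}})^2$ for $n\ge N_0$, and by the $\alpha^n\in(0,1]$ bound otherwise), the sequence $\{\exorth^n\}$ is now summable by the previous step, $\{\eDeltax^n\}$ is summable by the first observation, and taking $\bar{B}$-window sums preserves summability. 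Invoking Lemma~\ref{B-martingale} once more and again discarding the $-\infty$ branch by nonnegativity of $\weyorth{n}$, I obtain $\sum_{n=0}^{\infty}\eyorth^n<\infty$, whence $\sum_{n=0}^{\infty}\norm{\var{y}{n}}^2<\infty$.

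The genuinely delicate point is the ordering and the decoupling it enables: because $\var{y}{n}$ does not appear on the right-hand side of the $\mathbf{x}$-recursion, \eqref{eq:V-x column stochastic} is self-contained and can be resolved using only the \emph{externally} supplied summability of $\{\eDeltax^n\}$ (hypothesis (i)) and the positivity $\mu^n\ge\mu_{\min}$ (hypothesis (ii)); the $\mathbf{y}$-recursion is then driven by an input that is already known to be summable. The remaining care is routine once the hypotheses are in force, amounting to checking that the position-dependent weights keep $\wexorth{n}$ and $\weyorth{n}$ nonnegative (so the $-\infty$ alternative of Lemma~\ref{B-martingale} never triggers) and that the bounded multiplier $(2+\alpha_{\rm{mx}}^n)^2$ does not spoil summability of the forcing.
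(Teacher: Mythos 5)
Your proposal is correct and follows essentially the same route as the paper: bound $\sum_n \eDeltax^n$ by $\bar{B}\sum_n(\alpha^n)^2\|\Deltax^n\|^2$, use $\mu^n\ge\mu_{\min}>0$ for all large $n$, and apply Lemma~\ref{B-martingale} to \eqref{eq:V-x column stochastic} to get summability of $\{\exorth^n\}$, then to \eqref{eq:V-y column stochastic} for $\{\eyorth^n\}$. The only difference is that you write out the $\mathbf{y}$-step explicitly, which the paper dismisses as analogous; your treatment of it is the intended one.
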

\begin{proof}
It follows from \eqref{eq: energy_consensus} that it is sufficient to prove $\sum_{n=0}^{\infty}\exorth^{n}<\infty$ (for  $\sum_{n=0}^{\infty}\|\var{x}n\|^2<\infty$) and  $\sum_{n=0}^{\infty}\eyorth^n<\infty$ (for $\sum_{n=0}^{\infty}\|\var{y}n\|^2$ $<\infty$). We prove next only the former result. 

By Assumption~\ref{assumption:alpha} and \eqref{mu_lower},   there exists a sufficiently large $n$, say $\bar{n}$, such that  $\mu^{n}\geq \mu_{\min}> 0$, for all $n\geq \bar{n}$. We assume, without loss of generality, that $\bar{n} = 0$. 
Applying Lemma~\ref{B-martingale} to   \eqref{eq:V-x column stochastic} [cf.~Proposition~\ref{lem:V-consensus err column stochastic}], we have 
$\sum_{n=0}^{\infty}\eDeltax^{n}<+\infty$  $\Longrightarrow$    $\sum_{n=0}^{\infty}\exorth^{n} < +\infty$.
It is then sufficient to prove that 
$\sum_{n=0}^{\infty}(\alpha^{n})^2\|\Deltax^{n}\|^2 <\infty\quad \Longrightarrow \quad \sum_{n=0}^{\infty}\eDeltax^{n}<+\infty$.
This comes readily from the following chain of inequalities:\vspace{-0.2cm}
$$\sum_{k=0}^{n}\eDeltax^{k} 
=\sum_{k=0}^{n}\sum_{t=0}^{\bar{B}-1}\left(\alpha^{k+t}\right)^{2}\norm{\Deltax^{k+t}}^2 \leq \bar{B}\sum_{k=0}^{n+\bar{B}-1}\left(\alpha^{k}\right)^{2}\norm{\Deltax^{k}}^{2}.
$$ \qed\vspace{-0.3cm}
 \end{proof} 
\subsubsection{Bounding term iv in \eqref{eq:V-U part column stochastic}}\label{sec:bound_errors}

We are now ready to bound  term iv in \eqref{eq:V-U part column stochastic}, as stated next.
\begin{prop}\label{prop_bound_errors}
Suppose that  $\alpha^{n} \leq \alpha_{\rm{mx}}$,
  then 
\begin{align}\label{eq:V-consensus-err}
&\epsilon_{y}^{-1}	\weyorth{n+\bar{B}} \nonumber\\
&+ \frac{1}{\mu_{\min}}\left(c_L\epsilon_{x}^{-1}+\epsilon_y^{-1}\corth\left(2+\alpha_{\rm{mx} }  \right)^2\right)\wexorth{n+\bar{B}} \nonumber\\
{}\leq{} &\epsilon_{y}^{-1}	\weyorth{n} -\underbrace{ \sum_{k=0}^{\bar{B}-1}\left(c_{L}\epsilon_{x}^{-1}\exorth^{n+k} + \epsilon_{y}^{-1}\eyorth^{n+k}\right)}_{\rm{term\, iv}}\nonumber\\
&+ \frac{1}{\mu_{\min}}\left(c_L\epsilon_{x}^{-1}+\epsilon_y^{-1}\corth\left(2+\alpha_{\rm{mx} }\right)^2\right)\wexorth{n} \nonumber\\
&  + \left(\left(\epsilon_y^{-1}\corth\left(2+\alpha_{\rm{mx} }  \right)^2  +c_L\epsilon_x^{-1}\right)\dfrac{\cDeltax}{{\mu_{\min}}} + \epsilon_y^{-1}\corth\right) \sum_{k=0}^{\bar{B}-1}\eDeltax^{n+k} 
\end{align}
\end{prop}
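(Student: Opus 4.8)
The plan is to obtain \eqref{eq:V-consensus-err} as a single nonnegative linear combination of the two recursions already established in Proposition~\ref{lem:V-consensus err column stochastic}. To keep the bookkeeping transparent, abbreviate the window sums $E_x^n \triangleq \sum_{k=0}^{\bar{B}-1}\exorth^{n+k}$, $E_y^n \triangleq \sum_{k=0}^{\bar{B}-1}\eyorth^{n+k}$, and $E_\Delta^n \triangleq \sum_{k=0}^{\bar{B}-1}\eDeltax^{n+k}$, so that \eqref{eq:V-x column stochastic} reads $\wexorth{n+\bar{B}}\le \wexorth{n}-\mu^n E_x^n+\cDeltax E_\Delta^n$ and \eqref{eq:V-y column stochastic} reads $\weyorth{n+\bar{B}}\le \weyorth{n}-E_y^n+\corth(2+\alpha_{\rm{mx}}^n)^2 E_x^n+\corth E_\Delta^n$. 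I would multiply the $y$-recursion by $\epsilon_y^{-1}$ and the $x$-recursion by the positive constant
\[
\beta \triangleq \frac{1}{\mu_{\min}}\left(c_L\epsilon_x^{-1}+\epsilon_y^{-1}\corth(2+\alpha_{\rm{mx}})^2\right),
\]
and add the two inequalities. The left-hand sides combine into exactly $\epsilon_y^{-1}\weyorth{n+\bar{B}}+\beta\,\wexorth{n+\bar{B}}$, reproducing the stated LHS of \eqref{eq:V-consensus-err}, while the $\weyorth{n}$ and $\wexorth{n}$ terms appear on the right with coefficients $\epsilon_y^{-1}$ and $\beta$, as required.

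The crux is the coefficient of $E_x^n$, which receives a \emph{negative} contribution $-\beta\mu^n$ from the $x$-recursion and a \emph{positive} contribution $\epsilon_y^{-1}\corth(2+\alpha_{\rm{mx}}^n)^2$ from the $y$-recursion; the value of $\beta$ is chosen precisely so that these combine into $-c_L\epsilon_x^{-1}$, thereby producing term~iv with the correct sign. Since the hypothesis $\alpha^n\le\alpha_{\rm{mx}}$ forces $\alpha_{\rm{mx}}^n\le\alpha_{\rm{mx}}$ and hence $(2+\alpha_{\rm{mx}}^n)^2\le(2+\alpha_{\rm{mx}})^2$, and since \eqref{mu_lower} gives $\mu^n\ge\mu_{\min}>0$, using $E_x^n\ge 0$ I can bound
\[
-\beta\mu^n+\epsilon_y^{-1}\corth(2+\alpha_{\rm{mx}}^n)^2 \;\le\; -\beta\mu_{\min}+\epsilon_y^{-1}\corth(2+\alpha_{\rm{mx}})^2 \;=\; -c_L\epsilon_x^{-1},
\]
where the final equality is the defining identity $\beta\mu_{\min}=c_L\epsilon_x^{-1}+\epsilon_y^{-1}\corth(2+\alpha_{\rm{mx}})^2$. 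Together with the $-\epsilon_y^{-1}E_y^n$ term carried over unchanged from the $y$-recursion, this yields exactly $-\big(c_L\epsilon_x^{-1}\exorth^{n+k}+\epsilon_y^{-1}\eyorth^{n+k}\big)$ summed over the window, i.e.\ term~iv.

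Finally, collecting the $E_\Delta^n$ contributions gives the coefficient $\beta\,\cDeltax+\epsilon_y^{-1}\corth$; substituting the definition of $\beta$ rewrites this as $\big(c_L\epsilon_x^{-1}+\epsilon_y^{-1}\corth(2+\alpha_{\rm{mx}})^2\big)\cDeltax/\mu_{\min}+\epsilon_y^{-1}\corth$, which is precisely the last line of \eqref{eq:V-consensus-err}. The only genuine subtlety is the sign bookkeeping on the $E_x^n$ term: one must check that $\beta>0$ (guaranteed by $\mu_{\min}=1-\sigma^2>0$ together with positivity of $c_L,\corth,\epsilon_x^{-1},\epsilon_y^{-1}$) and that every multiplier applied to the recursions is nonnegative, so that the inequality directions are preserved and the monotonicity step $\mu^n\ge\mu_{\min}$, $(2+\alpha_{\rm{mx}}^n)^2\le(2+\alpha_{\rm{mx}})^2$ is legitimate. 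Everything else is routine recombination of the two already-proven window recursions.
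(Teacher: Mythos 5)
Your proof is correct and follows essentially the same route as the paper: both take the nonnegative linear combination of \eqref{eq:V-y column stochastic} (weighted by $\epsilon_y^{-1}$) and \eqref{eq:V-x column stochastic} (weighted by $\beta=\mu_{\min}^{-1}(c_L\epsilon_x^{-1}+\epsilon_y^{-1}\corth(2+\alpha_{\rm{mx}})^2)$), using $\mu^n\ge\mu_{\min}$ and $\alpha_{\rm{mx}}^n\le\alpha_{\rm{mx}}$ to make the $\exorth$ contributions collapse to $-c_L\epsilon_x^{-1}$. The paper merely phrases the cancellation as an add-and-subtract of $c_L\epsilon_x^{-1}\sum_k\exorth^{n+k}$ before invoking the $x$-recursion, which is algebraically identical to your direct combination.
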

\begin{proof}
Multiplying \eqref{eq:V-y column stochastic} by $\epsilon_y^{-1}$ on both sides we have
	\begin{align}\label{eq:V-y-1}
& \epsilon_y^{-1}\weyorth{n+\bar{B}} \nonumber\\
	 &  \leq {}   \epsilon_y^{-1}\weyorth{n} \nonumber\\
	&  \quad  - \epsilon_y^{-1} \sum_{k=0}^{\bar{B}-1}\eyorth^{n+k} + \epsilon_y^{-1}\corth\left(2+\alpha_{\rm{mx} }^n  \right)^2 \sum_{k=0}^{\bar{B}-1}\exorth^{n+k}
+ \epsilon_y^{-1}\corth\sum_{k=0}^{\bar{B}-1}\eDeltax^{n+k}\nonumber\\
 &	={} \epsilon_y^{-1}\weyorth{n} \\
	&\quad - \epsilon_y^{-1}\sum_{k=0}^{\bar{B}-1}\eyorth^{n+k} - c_L\epsilon_x^{-1}\sum_{k=0}^{\bar{B}-1}\exorth^{n+k} \nonumber\\
	&\quad + \left(\epsilon_y^{-1}\corth\left(2+\alpha_{\rm{mx} }^n \right)^2 +c_L\epsilon_x^{-1} \right) \sum_{k=0}^{\bar{B}-1}\exorth^{n+k}
+ \epsilon_y^{-1}\corth\sum_{k=0}^{\bar{B}-1}\eDeltax^{n+k}.\nonumber
	\end{align}

Since  $\alpha^n \leq \alpha_{\rm{mx}}$, we have $\alpha_{\rm{mx}}^n \leq \alpha_{\rm{mx}}$ and $\mu^n \geq \mu_{\min}$. Eq. \eqref{eq:V-x column stochastic}
then implies
\begin{align}
	& \wexorth{n+\bar{B}}\nonumber\\
	&\leq {} \wexorth{n}  -\mu_{\min}\sum_{k=0}^{\bar{B}-1}\exorth^{n+k} 
	+ \cDeltax \sum_{k=0}^{\bar{B}-1}\eDeltax^{n+k}.\nonumber 
\end{align}
Multiplying both sides of the above inequality by $(\epsilon_y^{-1}\corth\left(2+\alpha_{\rm{mx} }  \right)^2 +c_L\epsilon_x^{-1})/\mu_{\min}$   and using the fact that $\alpha_{\rm{mx}}^n \leq \alpha_{\rm{mx}}$, we have
\begin{align}\label{eq:V-x-2}
&\frac{1}{\mu_{\min}}\left(c_L\epsilon_x^{-1}+\epsilon_y^{-1}\corth\left(2+\alpha_{\rm{mx} }  \right)^2\right) \wexorth{n+\bar{B}}\nonumber\\
\leq {}&\frac{1}{\mu_{\min}}\left(c_L\epsilon_x^{-1}+\epsilon_y^{-1}\corth\left(2+\alpha_{\rm{mx} }  \right)^2\right) \wexorth{n}\nonumber\\
& - \left(c_L\epsilon_x^{-1}+\epsilon_y^{-1}\corth\left(2+\alpha_{\rm{mx} }  \right)^2\right)\sum_{k=0}^{\bar{B}-1}\exorth^{n+k}\nonumber\\ & + \left(c_L\epsilon_x^{-1}+\epsilon_y^{-1}\corth\left(2+\alpha_{\rm{mx} }  \right)^2\right)\dfrac{\cDeltax}{ {\mu_{\min}}} \sum_{k=0}^{\bar{B}-1}\eDeltax^{n+k}\vspace{-0.3cm}
\end{align}

Adding \eqref{eq:V-x-2}  to \eqref{eq:V-y-1} leads to the desired result.\qed
\end{proof}
\subsection{{Lyapunov}-like function and its descent properties}\label{sec:proof th1}

We are now in the position to construct a  function whose descent properties (every $\bar{B}$ iterations)  will used to prove Theorem~\ref{thm:convergence}. Because of that, we will refer to such a function as Lyapunov-like function. 

Adding   \eqref{eq:V-U part column stochastic} and \eqref{eq:V-consensus-err} (multiplied by ${\UBphi}/{2}$),   yields \vspace{-0.2cm} 
\begin{align}
	&\sum_{k=0}^{\bar{B}-1}U\left(\wavg{x}{n+\bar{B}+k}\right) + \frac{\UBphi}{2}\epsilon_{y}^{-1}
	\weyorth{n+\bar{B}}\nonumber\\
	&\hphantom{{}\leq{}}+ \frac{\UBphi}{2\,\mu_{\min}}\left(c_L\epsilon_{x}^{-1}+\epsilon_y^{-1}\corth\left(2+\alpha_{\rm{mx} }  \right)^2\right)\wexorth{n+\bar{B}}\nonumber\\
	&\leq \sum_{k=0}^{\bar{B}-1}U\left(\wavg{x}{n+k}\right) + \frac{\UBphi}{2}\epsilon_{y}^{-1}\weyorth{n}\nonumber \\
	& \hphantom{{}\leq{}} + \frac{\UBphi}{2\,\mu_{\min}}\left(c_L\epsilon_{x}^{-1}+\epsilon_y^{-1}\corth\left(2+\alpha_{\rm{mx} }  \right)^2\right) \wexorth{n} \nonumber\\
	&\hphantom{{}\leq{}} - \frac{c_\tau}{I}\,\LBphi\sum_{k=0}^{\bar{B}-1}\sum_{t=0}^{\bar{B}-1}\alpha^{n+k+t}\norm{\Deltax^{n+k+t}}^2\nonumber\\
	&\hphantom{{}\leq{}} +\frac{\UBphi}{2} \left(\frac{ {L+L_G}}{I}\cdot\UBphi+c_{L}\epsilon_{x}+\epsilon_{y} + \epsilon_y^{-1}\corth \right)\sum_{k=0}^{\bar{B}-1}\eDeltax^{n+k}\nonumber\\
	&\hphantom{{}\leq{}}+ \frac{\UBphi}{2\mu_{\min}}\left(c_L\epsilon_{x}^{-1}+\epsilon_y^{-1}\corth\left(2+\alpha_{\rm{mx} } \right)^2\right)\cDeltax\sum_{k=0}^{\bar{B}-1}\eDeltax^{n+k}. \label{eq:V-2 column stochastic}
\end{align}
Define 
\begin{equation}\label{Lyapunov_function}
\begin{aligned}
V^{n} \triangleq &\sum_{k=0}^{\bar{B}-1}U\left(\wavg{x}{n+k}\right) + \frac{\UBphi}{2}\epsilon_{y}^{-1}\weyorth{n} \\
&+  \frac{\UBphi}{2\mu_{\min}}\left(c_L\epsilon_{x}^{-1}+\epsilon_y^{-1}\corth\left(2+\alpha_{\rm{mx} }  \right)^2\right)\wexorth{n},
\end{aligned}\vspace{-0.2cm}
\end{equation}
and 
\begin{multline}\label{beta}
\beta^{n} \triangleq \frac{c_\tau}{I}\LBphi - \frac{\UBphi}{2}\alpha^n\left(\frac{ {L+L_G}}{I}\cdot\UBphi+c_{L}\epsilon_{x}+\epsilon_{y} + \epsilon_y^{-1}\corth \right.\\
+\left.\frac{\cDeltax}{\mu_{\min}}\left(c_L\epsilon_{x}^{-1}+\epsilon_y^{-1}\corth\left(2+\alpha_{\rm{mx} }  \right)^2\right)\right).
\end{multline}
Substituting (\ref{Lyapunov_function}) and (\ref{beta}) in \eqref{eq:V-2 column stochastic}, we obtain the desired descent property of $V^n$: for sufficiently large $n$, it holds 
\begin{equation}\label{eq:V column stochastic}
V^{n+\bar{B}} \leq V^{n} -\sum_{k=0}^{\bar{B}-1}\sum_{t=0}^{\bar{B}-1} \beta^{n+k+t}\alpha^{n+k+t}\norm{\Deltax^{n+k+t}}^2.
\end{equation}

\subsection{Proof of Theorem~\ref{thm:convergence}}\label{sec:final_part_of_the_proof}\vspace{-0.3cm}
The proof consists in two steps, namely: 
\begin{itemize}
\item[$-$]\textbf{Step~1}: Leveraging the descent property of the  {Lyapunov}-like  function, we first  show that $\lim\limits_{n\to\infty}\|\Deltax^{n}\| = 0$, either using a diminishing or constant step-size $\alpha^n$ (satisfying Assumption \ref{assumption:alpha}); and
\item[$-$]\textbf{Step~2}: Using the results in Step~1, we conclude the proof  showing that i)  {$\lim_{n \to \infty} D(\mathbf{x}^n) = 0$ and ii) $\lim_{n \to \infty} J(\bar{\mathbf{x}}^n) = 0$ }
\end{itemize}\vspace{-0.2cm}

\subsubsection{Step~1: $\lim\limits_{n\to\infty}\|\Deltax^{n}\| = 0$}\vspace{-0.2cm}

 Let us  distinguish the two choices of step-size, namely: $\alpha^{n}$ is constant  (satisfying Assumption \ref{assumption:alpha}.1);  or $\alpha^{n}$ is diminishing (satisfying Assumption~\ref{assumption:alpha}.2).\vspace{1ex}\\
\noindent\textbf{Case 1: constant step-size.}
Set $\alpha^{n}\equiv \alpha$ for all $n\in\mathbb{N}_+$. To obtain the desired descent on $V^n$ [cf.~\eqref{eq:V column stochastic}],  $\alpha$ has to be chosen so that $\beta^{n}={\beta}>0$ [cf.~\eqref{beta}]. We show next that if  $\alpha$ satisfies \eqref{constant_step} [cf.~Assumption~\ref{assumption:alpha}.2], then ${\beta}>0$.

Recall that \eqref{eq:V column stochastic} holds under the assumption that $\alpha \leq \alpha_{\rm{mx}}$, with $\alpha_{\rm{mx}}$ defined in (\ref{eq:alpha upperbound}). 
 Substituting the expressions of $\alpha_{\rm{mx}} $ and $\mu_{\min} = 1-\sigma^2$ [cf.~(\ref{mu_lower})]    in \eqref{beta} and  using the definitions of $c_{\Delta}$ and  $c_{\perp}$ [cf.~Proposition~\ref{lem:V-consensus err column stochastic}], one can check that $\beta^{n}={\beta}>0$   [cf.~\eqref{beta}] if, in addition to $\alpha \leq \alpha_{\rm{mx}} $,  $\alpha$ satisfies  also
\begin{equation}\label{bound_2_alpha}
\alpha \leq \frac{2\,c_\tau\,\LBphi}{I\UBphi} \left(\frac{L+L_G}{I}\cdot\UBphi+c_{L}\epsilon_{x} +\epsilon_{y}+\frac{\cDeltax}{1-\sigma^2}\left(c_L\epsilon_{x}^{-1}
	+ {9}\,\corth  \epsilon_{y}^{-1}\right) \right)^{-1},
\end{equation}
where $\epsilon_{x},\epsilon_{y}>0$ are free parameters. The above upperbound is maximized by
\begin{equation*}
\begin{aligned}
\epsilon_{x} &= \sqrt{\frac{\cDeltax}{1-\sigma^2}} = c\,\sqrt{\frac{2\bar{B}\left(\epsilon^{-1}+\bar{B}\right)}{(1-\tilde{\rho})(1-\sigma^2)}}  \\
\epsilon_y &=\sqrt{\frac{{9}\,\corth\cDeltax}{1-\sigma^2}} = {6 \,L_{\rm{mx}} \,\LBphi^{-1}\,(\epsilon^{-1} +\bar{B})\,\frac{\bar{B}c^2}{1-\tilde{\rho}}\sqrt{\frac{1}{1-\sigma^2}}}
\end{aligned}
\end{equation*}
Combining $\alpha \leq \alpha_{\rm{mx}} $ and (\ref{bound_2_alpha}), we get the following bound for $\alpha$:
\begin{multline}\label{bound_3_alpha}
	\alpha \leq \min\left\{\sigma\sqrt{\frac{1-\tilde{\rho}}{2c^{2}\bar{B}\left(\bar{B}+\epsilon^{-1}\right)}},
	\frac{2c_\tau\LBphi}{I\UBphi}\left(\frac{L+L_G}{I}\cdot\UBphi+2c \cdot c_{L}\sqrt{\frac{2\bar{B}\left(\epsilon^{-1}+\bar{B}\right)}{(1-\tilde{\rho})(1-\sigma^2)}} \right.\right.\\
	\left.\left. +12 \,L_{\rm{mx}} \,\LBphi^{-1}\,(\epsilon^{-1} +\bar{B})\frac{\bar{B}c^2}{1-\tilde{\rho}}\sqrt{\frac{1}{1-\sigma^2}} \right)^{-1}\right\},\vspace{-0.2cm}
\end{multline}
where recall that $\epsilon<({1-\rhoopt^2})/({\rhoopt^2\bar{B}})$ [cf.~Proposition \ref{lem:V-consensus err column stochastic}]. 
Since $({1-\tilde{\rho}})/({\epsilon^{-1}+\bar{B}})$  is maximized by $\epsilon = ({1-\rhoopt})/({\rhoopt \cdot\bar{B}})$ with the corresponding value being ${\left(1-\rhoopt\right)^2}/{\bar{B}}$, we obtain from (\ref{bound_3_alpha}) the   final  bound \eqref{constant_step}. 

Under \eqref{constant_step}, using
\eqref{eq:V column stochastic} and Lemma~\ref{B-martingale}  (recall that  $\liminf\limits_{n\to\infty}V^{n}>-\infty$, since $U$ is bounded from below on $\mathcal K$)  we get  
$\lim\limits_{n\to\infty}\|\Deltax^{n}\|= 0$ and, by Proposition~\ref{prop:summable consensus err column stochastic},
$\lim\limits_{n\to\infty}\|\var{x}{n}\| = 0$ and $\lim\limits_{n\to\infty}\|\var{y}{n}\| = 0$.
\vspace{2ex}\\
\noindent\textbf{Case 2: diminishing step-size.}
Since $\alpha^n$ is diminishing, there exists a sufficiently large $n_{2}$ so that $\beta^{n} \geq \beta >0$ for all $n\geq n_{2}$, 
implying
\begin{equation}\label{alpha_series_convergent}
\sum_{n=0}^{\infty}\sum_{t=0}^{\bar{B}-1}\alpha^{n+t}\norm{\Deltax^{n+t}}^2<\infty,
\end{equation}
which together with $\sum_{n=0}^{\infty}\alpha^n=\infty$ and  Proposition~\ref{prop:summable consensus err column stochastic} yield 
\begin{gather}
	\liminf\limits_{n\to\infty}\norm{\Deltax^{n}}= 0;\label{lim_inf}\\
 \lim\limits_{n\to\infty}\norm{\var{x}n}=0\quad \text{and}\quad  \lim\limits_{n\to\infty}\norm{\var{y}n}=0\label{lim_inf_2}.
\end{gather}

We prove next that $\limsup\limits_{n\to\infty}\|\Deltax^{n}\|= 0$, which together with \eqref{lim_inf} implies $\lim_{n\to\infty}\|\Deltax^{n}\|= 0$. 
Suppose  that $\limsup\limits_{n\to\infty}\|\Deltax^{n}\|>0$. This, together with  $\liminf\limits_{n\to\infty}\|\Deltax^{n}\|=0$, implies that there  exists an infinite set of indices $\mathcal{N}$ such that for all $n\in\mathcal{N}$,  one can find an integer $i_{n}>n$ such that:
\begin{gather}
\norm{\Deltax^{n}}<\eta,\quad  \|\Deltax^{i_{n}}\|>2\eta\\
\eta \leq \|\Deltax^{j}\| \leq 2\eta,\quad   n<j<i_{n}.\label{lim_sup_contr_2}
\end{gather}  Denote $\widehat{\mathbf{x}}_{(i)}^n\triangleq \widehat{\mathbf{x}}_{i}(\mathbf{x}^{n}_{(i)})$ and $\widehat{\mathbf{x}}^n\triangleq [\widehat{\mathbf{x}}_{(1)}^{n\top},\ldots, \widehat{\mathbf{x}}_{(I)}^{n\top}]^\top$. We have:
\begin{align}\label{eq:limsup diff delta x column stochastic}
\eta & \leq \norm{\Deltax^{i_{n}}}-\norm{\Deltax^{n}} \leq  \norm{\Deltax^{i_{n}} -\Deltax^{n}}\nonumber\\
& \leq \norm{\widetilde{\mathbf{x}}^{i_{n}} -  \widetilde{\mathbf{x}}^{n}} + \norm{ \Vwavg{x}{i_{n}}- \Vwavg{x}{n}}\nonumber\\
& \leq \norm{\widehat{\mathbf{x}}^{i_{n}} -\widehat{\mathbf{x}}^{n}} + \underset{e_{1}^{n}}{\underbrace{\norm{\widetilde{\mathbf{x}}^{i_{n}}  - \widehat{\mathbf{x}}^{i_{n}}} + \norm{\widetilde{\mathbf{x}}^{n}  - \widehat{\mathbf{x}}^{n}}}} + \norm{\Vwavg{x}{i_{n}}-\Vwavg{x}{n}}\nonumber\\
& \overset{(a)}{\leq} \hat{L}\norm{\mathbf{x}^{i_{n}}-\mathbf{x}^{n}} + \norm{\Vwavg{x}{i_{n}}-\Vwavg{x}{n}}+ e_{1}^{n}\nonumber\\
& \leq \hat{L}\left(\norm{\mathbf{x}^{i_{n}} - \mathbf{J}_{\boldsymbol{\phi}^{i_{n}} }\mathbf{x}^{i_{n}}} + \norm{\mathbf{x}^{n} - \mathbf{J}_{\boldsymbol{\phi}^{n}} \mathbf{x}^{n}} 
 + \sqrt{I}\norm{\wavg{x}{i_{n}}-\wavg{x}{n}}\right) \nonumber \\
 &\hspace{1em}+ \sqrt{I}\norm{\wavg{x}{i_{n}}-\wavg{x}{n}} + e_{1}^{n}\nonumber\\
& \leq \left(\hat{L} + 1\right)\sqrt{I}\norm{\wavg{x}{i_{n}}-\wavg{x}{n}} + \underset{e_{2}^{n}}{\underbrace{\hat{L}\,\left(\norm{\var{x}{i_{n}}} + \norm{\var{x}{n}}\right)}} + e_{1}^{n}\nonumber\\
& \overset{(b)}{\leq} \left(\hat{L}+ 1\right)\sqrt{I} 
\sum_{t=n}^{i_{n}-1}\alpha^{t}\norm{\dfrac{1}{I}\left((\boldsymbol{\phi}^{t})^{\top}\otimes \mathbf{I}_{m}\right)\Deltax^t}+e_{2}^{n} + e_{1}^{n}\nonumber\\
& \leq    \left(\hat{L} + 1\right)\sqrt{I} \sum_{t=n + 1}^{i_{n}-1}\alpha^{t}\norm{\Deltax^{t}} + \underbrace{(\hat{L} + 1)\sqrt{I} \alpha^{n}\norm{\Deltax^{n}}}_{e_3^n} + e_{2}^{n}+ e_{1}^{n}\nonumber\\
& \overset{(c)}{\leq} (\hat{L} + 1)\sqrt{I}\,\eta^{-1}\,\sum_{t=n + 1}^{i_{n}-1}\alpha^{t}\norm{\Deltax^{t}}^2 + e_3^n + e_{2}^{n}+ e_{1}^{n},
\end{align}
where in (a) we used  \eqref{BR_Lip} [cf.~Lemma~\ref{lemma_BR_properties}]; (b) follows from (\ref{eq: x weighted ave}); and in (c) we used the lower bound in \eqref{lim_sup_contr_2}.

 Since  i) $\lim\limits_{n\to\infty}\|\var{x}{n}\| = 0$ and $\lim\limits_{n\to\infty}\|\var{y}{n}\| = 0$ [cf.~(\ref{lim_inf_2})]; ii)  $\lim\limits_{n\to\infty}\|\widetilde{\mathbf{x}}^{n}-\widehat{\mathbf{x}}^{n}\| = 0$ [cf.~Lemma~\ref{lem:best-response consistency}]; iii)  and $\sum_{n=0}^{\infty}\sum_{t=0}^{\bar{B}-1}\alpha^{n+t}\|\Deltax^{n+t}\|^2<\infty$ [cf.~(\ref{alpha_series_convergent})],
there exists a sufficiently large $n_3$ such that the right-hand-side of \eqref{eq:limsup diff delta x column stochastic} is less than $\eta$, for all $n>n_3$, which leads to a contradiction. Therefore,   $\limsup\limits_{n\to\infty}\|\Deltax^{n}\|=0$.\vspace{-0.2cm}

\subsubsection{Step~2: $\lim\limits_{n\to\infty} M(\mathbf{x}^n)=0$}
 Recall that in the previous subsection we  proved that i)$ \lim\limits_{n\to\infty}\|\Deltax^{n}\|=0$; ii) $\lim\limits_{n\to\infty}\|\var{x}{n}\| = 0$; and iii) $\lim\limits_{n\to\infty}\|\var{y}{n}\| = 0$,
 using either a constant step-size  $\alpha^{n}\equiv \alpha$, with $\alpha$ satisfying \eqref{constant_step}, or a diminishing one. 
 The statement $\lim\limits_{n\to\infty} D(\mathbf{x}^n) = 0$ follows readily from point ii) and \vspace{-0.2cm}
 \begin{align}\label{eq:avg_consensus_x}
 \begin{split}
   \lim_{n\to\infty} \|\mathbf{x}_{(i)}^n - \bar{\mathbf{x}}^n\| \leq {} &  \lim_{n\to\infty} \|\mathbf{x}_{(i)}^n - \wavg{x}{n}\| + \lim_{n\to\infty}\norm{\wavg{x}{n} - \bar{\mathbf{x}}^n} \\
   \leq {} & \lim_{n\to\infty} \|\mathbf{x}_{(i)}^n - \wavg{x}{n}\| + \lim_{n\to\infty}\frac{1}{I} \sum_{j = 1}^I \|  \mathbf{x}_{(j)}^n - \wavg{x}{n}\| 
   = 0. 
 \end{split}
   \end{align} 
   
   Next we show $\lim\limits_{n\to\infty} J(\bar{\mathbf{x}}^n) = 0$. Recall the definition $J(\bar{\mathbf{x}}^n) \triangleq \| \bar{\mathbf{x}} (\bar{\mathbf{x}}^n) - \bar{\mathbf{x}}^n \|$, where for notation simplicity, we set 
   \begin{align}
    \bar{\mathbf{x}} (\bar{\mathbf{x}}^n) \triangleq \argmin_{\mathbf{z} \in \mathcal{K}}~ \left\{\Big(\nabla F(\bar{\mathbf{x}}^n)-\nabla G^{-}\big(\bar{\mathbf{x}}^n\big)\Big)^{\top} (\mathbf{z}-\bar{\mathbf{x}}^n)+ \dfrac{1}{2}\|\mathbf{z}-\bar{\mathbf{x}}^n\|^2+G(\mathbf{z})^+\right\} .
   \end{align}
   Since 
   \begin{align}\label{eq:bound_J}
   J (\bar{\mathbf{x}}^n)  \leq \| \widehat{\mathbf{x}}_i (\bar{\mathbf{x}}^n) - \bar{\mathbf{x}}^n \| + \| \bar{\mathbf{x}} (\bar{\mathbf{x}}^n) - \widehat{\mathbf{x}}_i (\bar{\mathbf{x}}^n)\|,
   \end{align}
   it is sufficient to show that the two terms on the right hand side are asymptotically vanishing, which is proved below.  
   
   \begin{itemize}
   	\item[$\bullet$] 
     $ \lim\limits_{n\to\infty} \|\widehat{\mathbf{x}}_i (\bar{\mathbf{x}}^n) - \bar{\mathbf{x}}^n \| = 0$.  We bound $\|\widehat{\mathbf{x}}_i (\bar{\mathbf{x}}^n) - \bar{\mathbf{x}}^n \|$   as
   \begin{align}\label{eq:piece_1}
   \begin{split}
   \|\widehat{\mathbf{x}}_i (\bar{\mathbf{x}}^n) - \bar{\mathbf{x}}^n \| \leq {} & \|\widehat{\mathbf{x}}_i (\wavg{x}{n}) - \wavg{x}{n} \| + \| \wavg{x}{n} - \bar{\mathbf{x}}^n\| + \|\widehat{\mathbf{x}}_i (\bar{\mathbf{x}}^n) -\widehat{\mathbf{x}}_i (\wavg{x}{n}) \|   \\
  \overset{(a)}{ \leq} {} &  \|\widehat{\mathbf{x}}_i (\wavg{x}{n}) - \wavg{x}{n} \| + (1+\hat{L})\| \wavg{x}{n} - \bar{\mathbf{x}}^n \|,
   \end{split}
   \end{align}
   where (a) follows from Lemma~\ref{lemma_BR_properties}. From~\eqref{eq:avg_consensus_x} we know $\lim\limits_{n\to\infty} \| \wavg{x}{n} - \bar{\mathbf{x}}^n \| = 0$.
   
    To show $\|\widehat{\mathbf{x}}_i (\wavg{x}{n}) - \wavg{x}{n} \|$ is asymptotically vanishing, we bound it as
    \begin{align}\label{eq:piece_2}
    \begin{split}
    \|\widehat{\mathbf{x}}_i (\wavg{x}{n}) - \wavg{x}{n} \| \leq {} & \|  \widehat{\mathbf{x}}_i (\wavg{x}{n}) - \widehat{\mathbf{x}}_{i}(\mathbf{x}^{n}_{(i)})\| 
     +  \|   \widehat{\mathbf{x}}_{i}(\mathbf{x}^{n}_{(i)})- \widetilde{\mathbf{x}}_{(i)}^n\| \\
     &  + \|\widetilde{\mathbf{x}}_{(i)}^n  - \wavg{x}{n}\|    .
    \end{split}
    \end{align}
    The result $\lim\limits_{n\to\infty}     \|\widehat{\mathbf{x}}_i (\wavg{x}{n}) - \wavg{x}{n} \| = 0$ follows from  Lemma~\ref{lemma_BR_properties}, Lemma~\ref{lem:best-response consistency} and points i)-iii).  
    
    From ~\eqref{eq:piece_1} and~\eqref{eq:piece_2} we conclude
    \begin{align}\label{eq:term_I}
    \lim\limits_{n\to\infty} \|\widehat{\mathbf{x}}_i (\bar{\mathbf{x}}^n) - \bar{\mathbf{x}}^n \| = 0.
    \end{align}
    
      \item[$\bullet$]  We prove $ \lim\limits_{n\to\infty} \| \bar{\mathbf{x}} (\bar{\mathbf{x}}^n) - \widehat{\mathbf{x}}_i (\bar{\mathbf{x}}^n) \|= 0$. Using the first order optimality conditions of $\bar{\mathbf{x}} (\bar{\mathbf{x}}^n)$ and $\widehat{\mathbf{x}}_i (\bar{\mathbf{x}}^n)$, we can bound their difference as
       \begin{align}
       \begin{split}
       \| \bar{\mathbf{x}} (\bar{\mathbf{x}}^n) - \widehat{\mathbf{x}}_i (\bar{\mathbf{x}}^n) \| \leq {} & \| \nabla \widetilde{f}_i \big(\widehat{\mathbf{x}}_i (\bar{\mathbf{x}}^n); \bar{\mathbf{x}}^n\big) - \nabla f_i (\bar{\mathbf{x}}^n) - \widehat{\mathbf{x}}_i (\bar{\mathbf{x}}^n) + \bar{\mathbf{x}}^n\| \\
       \leq {} & \|  \nabla \widetilde{f}_i \big(\widehat{\mathbf{x}}_i (\bar{\mathbf{x}}^n); \bar{\mathbf{x}}^n\big) - \nabla f_i (\widehat{\mathbf{x}}_i (\bar{\mathbf{x}}^n))\| \\
       & + \| \nabla f_i(\widehat{\mathbf{x}}_i (\bar{\mathbf{x}}^n)) - \nabla f_i (\bar{\mathbf{x}}^n)\| +\| \widehat{\mathbf{x}}_i (\bar{\mathbf{x}}^n) - \bar{\mathbf{x}}^n \|\\
       \leq {} & (\tilde{L}_i + L_i + 1)\| \widehat{\mathbf{x}}_i (\bar{\mathbf{x}}^n) - \bar{\mathbf{x}}^n \|.
       \end{split}
       \end{align}
       Using~\eqref{eq:term_I} we have 
       \begin{align}\label{eq:term_II}
        \lim\limits_{n\to\infty} \| \bar{\mathbf{x}} (\bar{\mathbf{x}}^n) - \widehat{\mathbf{x}}_i (\bar{\mathbf{x}}^n) \|= 0.
       \end{align}
        \end{itemize}
      The proof is completed just  combining~\eqref{eq:bound_J}, \eqref{eq:term_I} and~\eqref{eq:term_II}.

\section{Numerical results}
\label{sec:experiments}\vspace{-0.2cm}
\subsection{Sparse regression}\label{sec:sparse-regression}\vspace{-0.3cm}
In this section, we test the performance of SONATA on  the   sparse linear regression problem  \eqref{p: sparse regression} [cf.~Sec.~\ref{sec:examples}]. 
We generated the data set as follows. The ground truth signal $\mathbf{x}^\star \in \mathbb{R}^{500}$ is built by first drawing  randomly a vector from the 
normal distribution $\mathcal{N}(\mathbf{0},\mathbf{I})$, then thresholding the smallest $80\%$ of its elements to zero.  The underlying linear model is $\mathbf{b}_i = \mathbf{A}_i \mathbf{x}^\star + \mathbf{n}_i$, where 
the observation  matrix $\mathbf{A}_i \in \mathbb{R}^{20\times 500}$ is generated by first drawing i.i.d. elements from the 
distribution $\mathcal{N}(0,1)$, and  then normalizing the rows to unit norm; and   $\mathbf{n}_i$ is the additive noise, with    i.i.d. entries from  $\mathcal{N}(0,0.1)$.  We simulated  $100$ Monte Carlo trials, generating in each trial new   $\mathbf{A}_i$'s and $\mathbf{n}_i$'s.  
We considered  a time-varying digraph, composed of $I = 30$ agents.  In every time slot, a new digraph is generated according to the following 
procedure:  each agent $i$ has two out-neighbors,  one of them belonging to a  chain connecting all the agents and the other one  picked uniformly at random.   
To promote sparsity we use  
the (nonconvex) $\log$ function $G(\mathbf{x})= \lambda\cdot \sum_i \log (1+\theta\,|x_i|)/\log(1+\theta)$, where the parameter $\theta$ controls the tightness of the approximation
of the  $\ell_0$ function. We set $\lambda= 0.1$ and $\theta=2$.  
It is convenient to rewrite  $G(\mathbf{x})$ in the DC  form $G(\mathbf{x})= G^+(\mathbf{x})-G^-(\mathbf{x})$, with $G^+(\mathbf{x})=\|\mathbf{x}\|_1\cdot (\theta/\log(1+\theta))$. 
It is not difficult to check that 
such $G^+$ and $G^-$ satisfy Assumption \ref{assumption:P}.3; see, e.g., \cite{pang2016PartI}.


We run SONATA considering two alternative choices of $\widetilde{f}_i$, namely:


\noindent$\bullet$ \texttt{SONATA-PL} (PL stands for \emph{partial linearization}): Since $f_i = \|\mathbf{b}_i - \mathbf{A}_i \mathbf{x}\|^2$ is convex, one can keep $f_i$ unalterated and set in (\ref{surrogate_F}) $\widetilde{f}_i(\mathbf{x}_{(i)}) = f_i(\mathbf{x}_{(i)}) +  ({\tau_{PL}}/{2})\cdot \|\mathbf{x}_{(i)} - \mathbf{x}_{(i)}^n\|^2$. We set $\tau_{PL}=1.5$.  
The unique solution   $\widetilde{\mathbf{x}}_{(i)}^n$ of the resulting subproblem  \eqref{eq: x_tilde}  is computed  using the  FLEXA algorithm, with 
the following tuning (see \cite{facchinei2015parallel} for details): 
the initial point is selected randomly; the proximal parameter in the subproblems solved by FLEXA  is  set to be $2$; and the step-size of FLEXA  is chosen 
according to the diminishing rule $\gamma^r = \gamma^{r-1}\left(1-\mu\gamma^{r-1}\right),$ with  $\gamma^0 = 0.5$ and $\mu = 0.01$, with  $r$ denoting the (inner) iteration index. We terminate FLEXA when $J_{(i)}^r\leq 10^{-8}$, with $J_{(i)}^r\triangleq \|\mathbf{x}_{(i)}^{n,r}- \mathcal{S}_{\eta\lambda} (\mathbf{x}_{(i)}^{n,r} -2\mathbf{A}_i^\top\,(\mathbf{A}_i \mathbf{x}_{(i)}^{n,r} - \mathbf{b}_i)- \tau_{PL} \cdot(\mathbf{x}_{(i)}^{n,r} -\mathbf{x}_{(i)}^{n})+ \widetilde{\boldsymbol{\pi}}_i^n+\lambda \nabla G^- (\mathbf{x}_{(i)}^n))\|_\infty$, where  
$\mathbf{x}_{(i)}^{n,r}$ denotes  the value of $\mathbf{x}_{(i)}$ at the $n$-th outer and the $r$-th inner iteration,
 and   $\mathcal{S}_\beta \left(\mathbf{x}\right) \triangleq 
 \rm{sign}\left(\mathbf{x}\right)\cdot \max\{\vert \mathbf{x}\vert - \lambda\mathbf{1},\mathbf{0} \}$ is the   soft-thresholding operator  (intended to be applied to $\mathbf{x}$ component-wise). 
\\
\noindent$\bullet$ \texttt{SONATA-L} (L stands for \emph{linearization}): 
 To obtain a closed form expression for  $\widetilde{\mathbf{x}}_{(i)}^n$ in (\ref{surrogate_F}), one can choose $\widetilde{f}_i$ as  linearization of $f_i$ (plus the proximal term), that is, $\widetilde{f}_i(\mathbf{x}_{(i)}) = 2\mathbf{A}_i^\top(\mathbf{A}_i \mathbf{x}_{(i)}^n - \mathbf{b}_i +    ({\tau_{L}}/{2})\cdot \|\mathbf{x}_{(i)} - \mathbf{x}_{(i)}^n\|^2$. We set $\tau_{L}=1.5$.
The solution $\widetilde{\mathbf{x}}_{(i)}^n$ of the resulting subproblem (\ref{surrogate_F}) has the following closed form expression $\widetilde{\mathbf{x}}_{(i)}^n = \mathcal{S}_{\eta\lambda/\tau_L}(\mathbf{x}_{(i)}^n - \frac{1}{\tau_L}(2\mathbf{A}_i^\top(\mathbf{A}_i \mathbf{x}_{(i)}^n - \mathbf{b}_i) + \widetilde{\boldsymbol{\pi}}_i^n-\lambda \nabla G^- (\mathbf{x}_{(i)}^n)))$.

 As benchmark, we also simulated  the subgradient-push algorithm~\cite{nedic2015distributed} with diminishing step-size. Note that there is no  proof of convergence for such a scheme, when applied to the nonconvex, nonsmooth problem \eqref{p: sparse regression}. For all the algorithms, we  use the same step-size rule:  $\alpha^n = \alpha^{n-1}\left(1-\mu\alpha^{n-1}\right)$, with  $\alpha^0 = 0.5$ and $\mu = 0.01$.  Also, for all algorithms, we set $\mathbf{x}_{(i)}^0=\mathbf{0}$, for all $i$.

We monitor the progresses of the algorithms towards stationarity and consensus using respectively  the following two functions: i) $J^n\triangleq \|\bar{\mathbf{x}}^n - \mathcal{S}_{\eta\lambda}(\bar{\mathbf{x}}^n -2\sum_i\mathbf{A}_i^\top\left(\mathbf{A}_i \bar{\mathbf{x}}^n - \mathbf{b}_i) +\lambda \nabla G^- \left(\bar{\mathbf{x}}^n\right)\right)\|_\infty$; and ii) 
 $D^n\triangleq \|\mathbf{x}^n - \mathbf{J}\mathbf{x}^n\|_\infty$.   
It is not difficult to check that $J^n$ is a valid distance of the average iterates $\mathbf{J}\mathbf{x}^n$ from stationarity: it is continuous and zero if and only if its argument is a stationary solution of  \eqref{p: sparse regression}. 
 We also use the normalized  mean squared error (NMSE), defined as $\text{NMSE}^n \triangleq  \|\mathbf{x}^n - (\mathbf{1}\otimes\mathbf{I})\,{\mathbf{x}}^\star\|^2/(I\cdot \|\mathbf{x}^\star\|^2)$.

 In Fig. \ref{fig:sparse_regression_log}, we plot  $\log_{10}J^n$ and  $\log_{10}D^n$  [subplot (a)] and the NMSE [subplot (b)]
   versus the number of agents' message exchanges, averaged over $100 $ Monte-Carlo trials (we applied the $\log_{10}$ transform to $J^n$ and $D^n$
so that their distribution is  closer to  the normal one). 
 The figures show that both versions of  SONATA are  much 
 faster than the  distributed gradient algorithm. This seems mainly due to the gradient tracking mechanism put forth by the proposed scheme. Under  the same  tuning, SONATA-PL converges  faster than SONATA-L. According to  our intensive simulations (not reported here),
 SONATA-PL becomes up to one order of magnitude
 faster than SONATA-L when    $\tau_{PL}$ is reduced whereas   reducing $\tau_L$ slows down SONATA-L. \vspace{-0.5cm}
\begin{figure}
\begin{tabular}{cc}
 \includegraphics{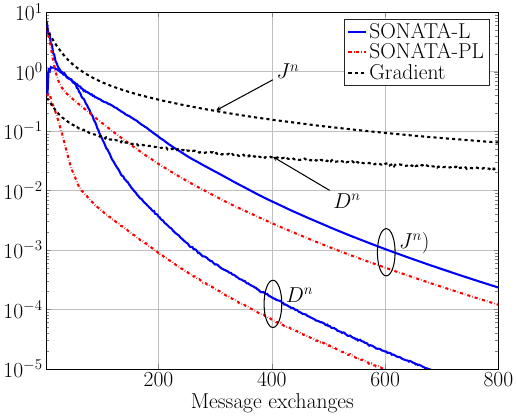}&\includegraphics{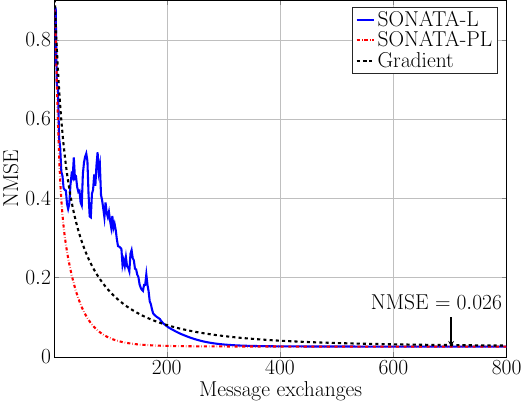}\vspace{-0.2cm}\\
\footnotesize{(a)} & \footnotesize{(b)}
\end{tabular}\vspace{-0.2cm}
\caption{Sparse regression problem \eqref{p: sparse regression} with log regularizer:  SONATA-PL, SONATA-L, and subgradient-push;  average of $\log_{10}J^n$ and
 $\log_{10}D^n$ vs. agent's message exchange [subplot (a)]; average of NMSE vs. agent's message exchange [subplot (b)].}\label{fig:sparse_regression_log}\vspace{-0.2cm}
\end{figure}

\subsection{Distributed PCA}\label{sec:DPCA}\vspace{-0.2cm}

Our second application is the distributed PCA problem\vspace{-0.2cm}
 \begin{equation}
 \begin{aligned}
 &\underset{\norm{\mathbf{x}}_2 \leq 1}{\textrm{min}}& & F\left(\mathbf{x}\right)\triangleq - \sum_{i=1}^{I} \norm{\mathbf{D}_i\mathbf{x}}^2,
 \end{aligned}\vspace{-0.2cm}
 \end{equation}
 with $I=30$.

  Each agent $i$ locally owns a data matrix $\mathbf{D}_i \in \mathbb{R}^{d_i\times m}$ and  communicate via a time-varying  digraph generated in the same way as the previous sparse regression example (cf. Sec.~\ref{sec:sparse-regression}). 

Since $f_i(\mathbf{x})\triangleq - \norm{\mathbf{D}_i\mathbf{x}}^2$ is concave, to apply SONATA we construct $\widetilde{f}_i$ by linearizing $f_i$, which leads to 
 $\widetilde{F}_i(\mathbf{x}_{(i)};\mathbf{x}_{(i)}^n)= (I \cdot \mathbf{y}_{(i)}^n)^\top (\mathbf{x}_{(i)} - \mathbf{x}_{(i)}^n)
 + ({\tau}/{2})\cdot\|\mathbf{x}_{(i)}-\mathbf{x}_{(i)}^n\|^2.$
 The solution $\widetilde{\mathbf{x}}_{(i)}^n$ of the resulting subproblem  has the closed form solution 
 $\widetilde{\mathbf{x}}_{(i)}^n = \mathcal{P}_{\|\mathbf{x}_{(i)}\| \leq 1}(\mathbf{x}_{(i)}^n - I\cdot\mathbf{y}_{(i)}^n/\tau),$
 where $\mathcal{P}$ denotes the Euclidean projection onto the set $\{\mathbf{x}_{(i)}\,:\,\|\mathbf{x}_{(i)}\| \leq 1\}$. 
 As benchmark, we implemented also the gradient projection algorithm \cite{bianchi2013convergence}, adapted to time-varying network. Note that there is no formal proof of this algorithm in the simulated setting. 
  The performance of the algorithms is tested on both synthetic and real data sets, as detailed next.\vspace{-0.2cm}
  
 \subsubsection{Synthetic data}\label{sec:PCA_synthetic_data}\vspace{-0.2cm}
  Each agent $i$ locally owns a data matrix $\mathbf{D}_i \in \mathbb{R}^{30\times 500}$, whose rows are i.i.d., drawn by the $\mathcal{N}\left(\mathbf{0},\boldsymbol{\Sigma}\right)$.
  The covariance matrix $\boldsymbol{\Sigma}$, whose  eigendecomposition is $\boldsymbol{\Sigma} = \mathbf{U}\boldsymbol{\Lambda}\mathbf{U}^T$, is generated as follows:
 we synthesize   $\mathbf{U}$ by first generating a square matrix whose entries follow the i.i.d. standard normal distribution, then perform the QR decomposition
 to obtain its orthonormal basis; and the eigenvalues $\rm{diag}(\boldsymbol{\Lambda})$ are i.i.d. uniformly distributed in $[0,1]$. 
 
 The algorithms are 
  tuned as follows: $\mathbf{x}_{(i)}^0$ is generated with  i.i.d elements drawn by the standard Normal distribution. The step-size $\alpha^n$ is chosen according to the diminishing rule used in the previous example, where we set  $\alpha^0 = 1$
  and $\mu= 10^{-3}$ for SONATA and $\alpha^0 = 1$ and $\mu= 10^{-2}$ for the gradient algorithm.  The proximal parameter $\tau$ for SONATA is set to be $1$.
  The distance of $\bar{\mathbf{x}}^n$ from stationarity is  measured by 
 $J^n\triangleq  \|\bar{\mathbf{x}}^n -  \mathcal{P}_{\|\mathbf{x}_{(i)}\| \leq 1}(\bar{\mathbf{x}}^n - \nabla F(\bar{\mathbf{x}}^n))\|_\infty,$
 while the consensus disagreement $D^n$ and the  $\textrm{NMSE}^n$ are defined as in the previous example; in the definition of $\textrm{NMSE}^n$
  the ground truth signal $\mathbf{x}^\star$ is now the leading eigenvector of matrix $\sum_{i=1}^I \mathbf{D}_i^\top \mathbf{D}_i$.

  In Fig.~\ref{fig:DPCA}, we plot  $\log_{10}J^n$ and  $\log_{10}D^n$  [subplot (a)] and the  NMSE [subplot (b)]
   versus the number of agents' message exchanges, averaged over $100 $ Monte-Carlo trials. In each trial, $\boldsymbol{\Sigma}$ is fixed while the $\mathbf{D}_i$'s are randomly generated. Fig.~\ref{fig:DPCA}(a)  clearly shows that SONATA can  find a stationary point efficiently while the gradient algorithm progresses very slowly. More interestingly,   
    Fig.~\ref{fig:DPCA}(b) shows that SONATA always find the leading eigenvector whereas  the gradient algorithm fails to achieve  a small NMSE value.  \vspace{-0.2cm}
  

  \begin{figure}
\begin{tabular}{cc}
\includegraphics{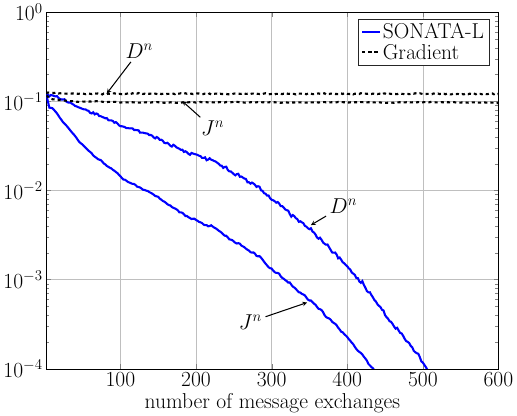} &\includegraphics{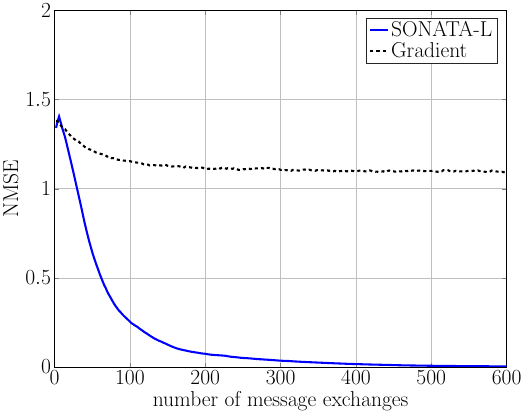}\vspace{-0.2cm}\\
\footnotesize{(a)} & \footnotesize{(b)}\vspace{-0.2cm}
\end{tabular}
\caption{Distributed PCA Problem \eqref{p: sparse regression} on synthetic data set:  SONATA and gradient projection algorithm;  average of $\log_{10}J^n$ and
 $\log_{10}D^n$ vs. agent's message exchange [subplot (a)]; average of NMSE vs. agent's message exchange [subplot (b)].}\vspace{-0.4cm}\label{fig:DPCA}
\end{figure}

\subsubsection{Gene expression data}\vspace{-0.2cm}
This second experiment  tests   SONATA on a real-world data set. Specifically, we used the breast cancer gene expression data set  \cite{bild2006oncogenic}, which consists of  $d = 158$ samples and $m = 12625$ genes per sample.  We first uniformly randomly permute the order the samples and then equally divided the samples among the $I= 30$ agents. To avoid the issue that $d$ is not divisible by $I$, we let the first $I-1$ agents   owing $d_i = \lfloor d/I \rfloor$ samples each, while the $I$-th agent owning the remaining samples. The samples are preprocessed by subtracting the mean from all of them. Note that this can be achieved distributively by running an average consensus algorithm beforehand.

The rest of the setting and tuning of the algorithms are the same of those described in  Sec.~\ref{sec:PCA_synthetic_data}. In Fig.~\ref{fig:DPCA_gene_data}, we plot  $\log_{10}J^n$ and  $\log_{10}D^n$  [subplot (a)] and the  NMSE [subplot (b)]
   versus the number of agents' message exchanges, averaged over $100 $ Monte-Carlo trials. In each trial, samples are randomly partitioned among the agents. From the figure we can see that the behavior of the algorithms on the gene expression data set is similar to that on synthetic data set. Moreover, SONATA  converges  quite fast even though the variable dimension of the real data set we adopted is massive.\vspace{-0.4cm}
   
     \begin{figure}
\begin{tabular}{cc}
\includegraphics{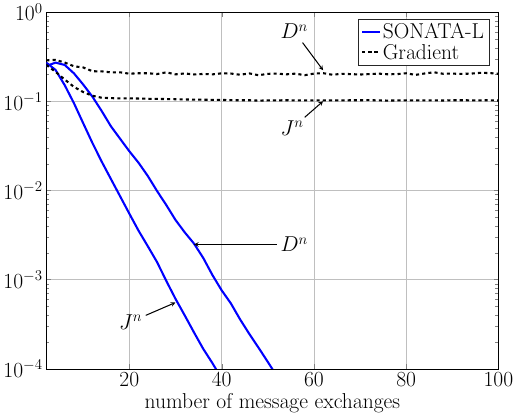} &\includegraphics{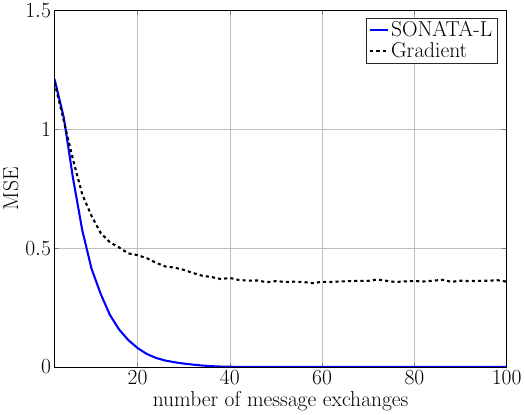}\vspace{-0.2cm}\\
\footnotesize{(a)} & \footnotesize{(b)}\vspace{-0.2cm}
\end{tabular}
\caption{Distributed PCA Problem \eqref{p: sparse regression} {on gene expression data set}:  SONATA and gradient projection algorithm;  average of $\log_{10}J^n$ and
 $\log_{10}D^n$ vs. agent's message exchange [subplot (a)]; average of NMSE vs. agent's message exchange [subplot (b)].}\vspace{-0.4cm}\label{fig:DPCA_gene_data}
\end{figure}

\vspace{-0.3cm}

\section{Appendix}\vspace{-0.1cm}
\appendix
   \vspace{-0.2cm}

 {\section{Proof of Lemma~\ref{cor:averaging matrix column stochastic}}\label{app:proof_lemma_1}\vspace{-0.2cm}
We begin introducing  the following intermediate result. 
 \begin{lem}\label{lem:geo decay row stochastic}
 	In the setting of Lemma~\ref{cor:averaging matrix column stochastic},    the following hold: 
 	\begin{enumerate}
 	\item[(i)]  
The elements of   $\mathbf{A}^{n:0}$, $n\in \mathbb{N}_+$,  can be bounded as\vspace{-0.1cm}
 	\begin{align} \label{eq:def delta}
 	&\inf_{t\in\mathbb{N}_+} \left(\min_{1\leq i\leq I} \left(\mathbf{A}^{t:0}\mathbf{1}\right)_{i}\right)\geq \LBphi,\\
 	&\sup_{t\in\mathbb{N}_+} \left(\max_{1\leq i\leq I} \left(\mathbf{A}^{t:0}\mathbf{1}\right)_{i}\right)\leq  \UBphi\label{eq:def delta_up},
 	\end{align} 
 	where $\phi_{lb}$ and $\phi_{ub}$ are defined in \eqref{consensus-error-bound-pert-cons};
 	\item[(ii)] 
 For any given $n, k\in \mathbb{N}_+$, $n\geq k$,	there exists a stochastic vector $\boldsymbol{\xi}^{k}\triangleq [\xi_1^{k},\ldots \xi_I^{k}]^\top$ $($i.e., $\boldsymbol{\xi}^{k}>  \mathbf{0}$ and $\mathbf{1}^\top \, \boldsymbol{\xi}^{k}=1)$ such that 
 	\begin{equation}\label{eq:exp_decay_W}
 	\left\vert\mathbf{W}^{n:k}_{ij}  - \xi_{j}^{k}\right\vert \leq c_{0}\,(\rho)^{\big\lfloor  \frac{n-k+1}{(I-1)B}\big \rfloor},\qquad \forall i,j\in [I],
 	\end{equation}
 	where $c_{0}$ and $\rho$ are defined in (\ref{definitions_kappa_c0_rho_0}). 
	\end{enumerate}
 \end{lem}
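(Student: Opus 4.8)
The plan is to prove the two parts in sequence: the first is essentially a reformulation of Proposition~\ref{prop:error_decay}(i) at the level of the matrix products $\mathbf{A}^{n:0}$, and the second is a weak-ergodicity estimate for the row-stochastic products $\mathbf{W}^{n:k}$.

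\emph{Part (i).} The cornerstone is an entrywise lower bound on products of $\mathbf{A}^n$ over windows of length $(I-1)B$: I would first show that, for every $k$ and every $n$ with $n-k+1\geq (I-1)B$, one has $(\mathbf{A}^{n:k})_{ij}\geq \kappa^{(I-1)B}$ for all $i,j$. This follows by combining the self-loop bound $a_{ii}^n\geq \kappa$ (C1 of Assumption~\ref{assumption:A}) with the $B$-strong connectivity of Assumption~\ref{assumption:G}: within any window of $(I-1)B$ consecutive slots a directed path of length at most $I-1$ from $j$ to $i$ can be realized, padding the remaining slots with self-loops, so that this single path already contributes a product of at least $(I-1)B$ factors each $\geq\kappa$. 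Taking $\boldsymbol{\phi}^0=\mathbf{1}$, we have $(\mathbf{A}^{t:0}\mathbf{1})_i=\sum_j(\mathbf{A}^{t:0})_{ij}$; the lower bound $\phi_{lb}=\kappa^{2(I-1)B}$ then follows from the window estimate when $t+1\geq (I-1)B$, and from the pure self-loop bound $(\mathbf{A}^{t:0})_{ii}\geq\kappa^{t+1}$ when $t+1<(I-1)B$ (the extra factor of $2$ in the exponent gives uniform validity for all $t$). For the upper bound, column stochasticity (Assumption~\ref{assumption:A_bis}) yields $\mathbf{1}^\top\mathbf{A}^{t:0}=\mathbf{1}^\top$, hence $\sum_i(\mathbf{A}^{t:0}\mathbf{1})_i=I$; combining with the lower bound on the other $I-1$ coordinates gives $(\mathbf{A}^{t:0}\mathbf{1})_i\leq I-(I-1)\phi_{lb}\leq\phi_{ub}$.

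\emph{Part (ii).} Using the telescoping identity obtained from \eqref{eq:def W}, I would record $\mathbf{W}^{n:k}=(\mathbf{D}_{\boldsymbol{\phi}^{n+1}})^{-1}\mathbf{A}^{n:k}\mathbf{D}_{\boldsymbol{\phi}^{k}}$, so that $\mathbf{W}^{n:k}_{ij}=(\phi_i^{n+1})^{-1}(\mathbf{A}^{n:k})_{ij}\,\phi_j^{k}$. Plugging in Part (i) together with the window entry-bound on $\mathbf{A}^{n:k}$, every entry of $\mathbf{W}^{n:k}$ over a window of $(I-1)B$ slots is bounded below by $\tilde{\kappa}^{(I-1)B}$, with $\tilde{\kappa}$ as in \eqref{definitions_kappa_c0_rho_0}. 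Since each $\mathbf{W}^{n:k}$ is row-stochastic (products of row-stochastic matrices are row-stochastic), a matrix whose entries are all at least $\delta$ has Dobrushin ergodicity coefficient at most $1-\delta$; this forces the difference between any two rows to contract by the factor $\rho=1-\tilde{\kappa}^{(I-1)B}$ over each block of $(I-1)B$ slots. Iterating over the $\lfloor(n-k+1)/((I-1)B)\rfloor$ disjoint blocks available between $k$ and $n$ yields geometric decay of the row-disagreement at rate $(\rho)^{\lfloor(n-k+1)/((I-1)B)\rfloor}$. To extract $\boldsymbol{\xi}^k$, I would fix $k$ and let $n$ grow: the geometric contraction makes the rows of $\mathbf{W}^{n:k}$ Cauchy and collapse to a common limit $(\boldsymbol{\xi}^k)^\top$, which is stochastic since each $\mathbf{W}^{n:k}$ is row-stochastic (so $\mathbf{1}^\top\boldsymbol{\xi}^k=1$) and its entries inherit the strictly positive window lower bound (so $\boldsymbol{\xi}^k>\mathbf{0}$). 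The estimate \eqref{eq:exp_decay_W} with the explicit constant $c_0$ then follows from a telescoping bound on $\mathbf{W}^{n:k}_{ij}-\xi_j^k$, where $c_0$ absorbs the geometric series and the normalization factors $\phi_i^{n+1},\phi_j^k$ controlled via Part (i).

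I expect the main obstacle to be the scrambling step: carefully certifying that the window product $\mathbf{A}^{n:k}$ (and hence $\mathbf{W}^{n:k}$) is entrywise bounded below, which requires the delicate interleaving of a realized directed path with self-loops across slots whose edge sets vary with time, and then propagating these entry bounds into the explicit constants $\tilde{\kappa}$, $\rho$, and $c_0$ of \eqref{definitions_kappa_c0_rho_0} without losing track of the floor function in the exponent.
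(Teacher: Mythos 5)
Your proposal is sound, and it is essentially the argument the paper relies on: the paper does not prove this lemma at all, but defers to \cite[Lemmas 2 and 4]{nedic2009distributedsubgradient}, and your sketch reconstructs exactly that standard route (entrywise lower bounds on window products, row sums controlled via column stochasticity, Dobrushin-type contraction of the row-stochastic products $\mathbf{W}^{n:k}$, and nested convex hulls of rows collapsing to the stochastic limit $\boldsymbol{\xi}^k$). Two small points where your sketch should be tightened when written out: the window lower bound $(\mathbf{A}^{n:k})_{ij}\geq\kappa^{(I-1)B}$ cannot be obtained from a single path in one union graph, since the edges of such a path need not occur in the right temporal order --- the rigorous version lets the set of nodes reachable from $j$ grow by at least one node per $B$-slot window, which is why $(I-1)B$ slots are needed; and for part (i) with $t+1\geq(I-1)B$ you should peel off the \emph{last} $(I-1)B$ factors and use $\mathbf{1}^\top\mathbf{A}^{t-(I-1)B:0}\mathbf{1}=I$ to avoid needing a row-sum lower bound on an intermediate (merely column-stochastic) product.
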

The proof Lemma~\ref{lem:geo decay row stochastic}  follows similar steps as those in~\cite[Lemma 2, Lemma 4]{nedic2009distributedsubgradient} and thus is omitted, although the results in~\cite{nedic2009distributedsubgradient} are established under a stronger condition on $\mathcal{G}^n$  than  Assumption~\ref{assumption:G}.
 
We prove now   Lemma~\ref{cor:averaging matrix column stochastic}. Let $\mathbf{z}\in \mathbb{R}^{I\cdot m}$ be an arbitrary vector. For each $\ell=1,\ldots ,m$,  define  $\mathbf{z}_{\ell}\triangleq (\mathbf{I}_I \otimes \mathbf{e}_{\ell}^\top)\,\mathbf{z}$, where $\mathbf{e}_{\ell}$ is the $\ell$-th canonical vector; we denote by  ${z}_{\ell,j}$ the $j$-th component of  $\mathbf{z}_{\ell}$, with $j\in [I]$. 
 We have\vspace{-0.1cm} 
\begin{align}\label{eq:bound_on_W_hat}
	 	\norm{\left(\widehat{\mathbf{W}}^{n:k} - \mathbf{J}_{\boldsymbol{\phi}^{k}}\right)\mathbf{z}}_2
	 	\leq \sqrt{I\cdot \sum_{\ell=1}^{m} \norm{  \left(\mathbf{W}^{n:k} - \frac{1}{I}\mathbf{1}\,(\boldsymbol{\phi}^{k})^\top\right)\mathbf{z}_{\ell}}^{2}_\infty }.
\end{align}
We bound next the above term. Given $\boldsymbol{\xi}^k$ as in Lemma~\ref{lem:geo decay row stochastic} [cf.~\eqref{eq:exp_decay_W}],  define $\mathbf{E}^{n:k}\triangleq \mathbf{W}^{n:k} - \mathbf{1}(\boldsymbol{\xi}^{k})^\top$, whose $ij$-th element is denoted by  ${E}^{n:k}_{ij}$. We have 
	\begin{align}\label{eq: consensus matrix column stochastic}
	&\norm{\left(\mathbf{W}^{n:k} - \frac{1}{I}\mathbf{1}(\boldsymbol{\phi}^{k})^\top\right)\mathbf{z}_{\ell}}_{\infty} \overset{\eqref{eq:def W}} {=} \norm{\left(\mathbf{W}^{n:k} - \frac{1}{I}\mathbf{1}\left(\boldsymbol{\phi}^{n+1}\right)^{\top}\mathbf{W}^{n:k}\right)\mathbf{z}_{\ell}}_{\infty}\nonumber\\
	&  = \norm{\left(\mathbf{I} - \frac{1}{I}\mathbf{1}\left(\boldsymbol{\phi}^{n+1}\right)^{\top}\right)\mathbf{E}^{n:k}\,\mathbf{z}_{\ell}}_{\infty}\nonumber\\
	&   \leq \max_{1\leq i \leq I}\left(\left(1-\frac{\phi_{i}^{n+1}}{I}\right)\sum_{j=1}^{I}\left\vert E_{ij}^{n:k}\right\vert \left\vert z_{\ell,j}\right\vert + \sum_{j'\neq i}^{I}\frac{\phi_{j'}^{n+1}}{I}\sum_{j=1}^{I}\left\vert E_{j'j}^{n:k}\right\vert \left\vert z_{\ell,j}\right\vert \right)\nonumber\\
	&   \leq 2\,c_{0}\,(\rho)^{\big\lfloor  \frac{n-k+1}{(I-1)B}\big \rfloor}\norm{\mathbf{z}_{\ell}}_{1}\leq 2\,c_{0}\,(\rho)^{\big\lfloor  \frac{n-k+1}{(I-1)B}\big \rfloor}\,\sqrt{I}\,\norm{\mathbf{z}_{\ell}}_2.	
	\end{align} 	
Combining  (\ref{eq:bound_on_W_hat}) and (\ref{eq: consensus matrix column stochastic}) we obtain  
\begin{align}
 	\norm{\widehat{\mathbf{W}}^{n:k} - \mathbf{J}_{\boldsymbol{\phi}^{k}}}_{2} \leq  2c_0 I (\rho)^{\big\lfloor  \frac{n-k+1}{(I-1)B}\big \rfloor}.
\end{align}
Moreover, the matrix difference above can be alternatively uniformly bounded as follows:
\begin{align*}
\norm{\widehat{\mathbf{W}}^{n:k} - \mathbf{J}_{\boldsymbol{\phi}^{k}}} = \| (\mathbf{I} - \mathbf{J}_{\boldsymbol{\phi}^{n+1}}) \widehat{\mathbf{W}}^{n:k}\| \leq \|\mathbf{I} - \mathbf{J}_{\boldsymbol{\phi}^{n+1}}\| \|\widehat{\mathbf{W}}^{n:k}\| \overset{(a)}{\leq} \sqrt{2I} \cdot \sqrt{I},
\end{align*}
where (a) follows from~\eqref{eq:I_minus_J} and $ \|\widehat{\mathbf{W}}^{n:k}\| \leq \sqrt{I}$. This completes the proof. 
\hfill $\square$}
\vspace{-0.3cm}
 {\section{Proof of Lemma~\ref{lem:consensus err contraction column stochastic}}\label{app:lem 5 proof}\vspace{-0.2cm}
Recall the SONATA update written in vector-matrix form in ~\eqref{eq: update vec phi}-\eqref{eq: update vec y}. 
Note that the $x$-update therein is a special case of the perturbed condensed push-sum algorithm~\eqref{eq:consensus_mat_form}, with perturbation $\error^{n+1} = \alpha^n \widehat{\mathbf{W}}^n \mathbf{x}^n$. We can then apply   Proposition~\ref{prop:error_decay} and readily obtain (\ref{eq:consensus err contraction x column stochastic}).  

To prove~\eqref{eq:consensus err contraction y column stochastic}, we follow a similar approach:  noticing that the $y$-update in (\ref{eq: update vec y}) is a special case of~\eqref{eq:consensus_mat_form}, with perturbation $\error^{n+1} =  (\DiagPhi{n+1})^{-1}\left(\mathbf{g}^{n+1}-\mathbf{g}^{n}\right)$, we can write
\begin{align*}
\begin{split}
\| \var{y}{n+\bar{B}}\| \leq {} & \rhoopt \| \var{y}{n}\| + \sqrt{2} I \sum_{t = 0}^{\bar{B}-1} \| (\DiagPhi{n+t+1})^{-1}\left(\mathbf{g}^{n+t+1}-\mathbf{g}^{n+t}\right)\|\\
 \leq {} &\rhoopt\norm{\var{y}{n}} +  \sqrt{2} I\,L_{\rm{mx}}\LBphi^{-1}\sum_{t=0}^{\bar{B}-1}\norm{\widehat{\mathbf{W}}^{n+t}(\mathbf{x}^{n+t} + \alpha^{n+t}\Delta \mathbf{x}^{n+t}) - \mathbf{x}^{n+t}}\\
 \leq {} & \rhoopt\norm{\var{y}{n }} +  \sqrt{2} I\,L_{\rm{mx}}\LBphi^{-1}\sum_{t=0}^{\bar{B}-1}\left(\norm{\widehat{\mathbf{W}}^{n+t}\var{x}{n+t}} + \norm{\var{x}{n+t}} + \alpha^{n+t}\norm{\widehat{\mathbf{W}}^{n+t}\Delta \mathbf{x}^{n+t}}\right)\\
 \leq {} & \rhoopt\norm{\var{y}{n}} +  \sqrt{2} I\,L_{\rm{mx}}\LBphi^{-1}\sum_{t=0}^{\bar{B}-1}\left((\sqrt{I} + 1) \norm{\var{x}{n+t}} + \alpha^{n+t}\sqrt{I}\norm{\Delta \mathbf{x}^{n+t}} \right)\\
 \leq {} & \rhoopt\norm{\var{y}{n}} +   I\sqrt{2I}\,L_{\rm{mx}}\LBphi^{-1}\sum_{t=0}^{\bar{B}-1}\left(2\norm{\var{x}{n+t}} + \alpha^{n+t}\norm{\Delta \mathbf{x}^{n+t}} \right).
\end{split}
\end{align*}
This completes the proof.}

\bibliographystyle{spmpsci} 

\end{document}